\crefname{hypothesis}{Hypothesis}{Hypotheses}
\def\eu{\ensuremath{\mathrm{e}}}
\def\iu{\ensuremath{\mathrm{i}}}
\title{Generalized Honeycomb-structured Materials in the Subwavelength Regime\thanks{Submitted to the editors DATE.
\funding{This work was funded by the National Key R\&D Program of China (Grant No. 2021YFA0719200) and the National Natural Science Foundation of China (Grant No. 11871299).}}}
\author{Borui Miao\thanks{Yau Mathematical Sciences Center, Tsinghua University, Beijing, 100084, China (\email{mbr20@mails.tsinghua.edu.cn}).}
\and Yi Zhu\thanks{Yau Mathematical Sciences Center, Tsinghua University, Beijing, 100084, China and Yanqi Lake Beijing Institute of Mathematical Sciences and Applications, Beijing, 101408, China (\email{yizhu@tsinghua.edu.cn}).}
}
\newcommand*{\addFileDependency}[1]{% argument=file name and extension
  \typeout{(#1)}% latexmk will find this if $recorder=0 (however, in that case, it will ignore #1 if it is a .aux or .pdf file etc and it exists! if it doesn't exist, it will appear in the list of dependents regardless)
  \@addtofilelist{#1}% if you want it to appear in \listfiles, not really necessary and latexmk doesn't use this
  \IfFileExists{#1}{}{\typeout{No file #1.}}% latexmk will find this message if #1 doesn't exist (yet)
}
\begin{document}
	
	\maketitle
	% REQUIRED
	\begin{abstract}
		Honeycomb structures lead to conically degenerate points on the dispersion surfaces. These spectral points, termed as Dirac points, are responsible for various topological phenomena. In this paper, we investigate the generalized honeycomb-structured materials, which have six inclusions in a hexagonal cell. We obtain the asymptotic band structures and corresponding eigenstates in the subwavelength regime using the layer potential theory. Specifically, we rigorously prove the existence of the double Dirac cones lying on the 2nd-5th bands when the six inclusions satisfy an additional symmetry. This type of inclusions will be referred to as super honeycomb-structured inclusions. Two distinct deformations breaking the additional symmetry, contraction and dilation, are further discussed. We prove that the double Dirac cone disappears, and a local spectral gap opens. The corresponding eigenstates are also obtained to show the topological differences between these two deformations. Direct numerical simulations using finite element methods agree well with our analysis.
	\end{abstract}
	% REQUIRED
	\begin{keywords}
		subwavelength regime, generalized honeycomb-structured inclusions, layer potentials, periodic capacitance matrix.
	\end{keywords}
	
	% REQUIRED
	\begin{AMS}
		35C15, 35C20, 35P15, 35B40, 45M05
	\end{AMS}
	
	\section{Introduction}
	The last two decades have witnessed a vast development in topological materials admitting various intriguing phenomena \cite{Hasan2010, Lan2022, Ozawa2019, Qi2011}, which are rooted in the symmetries of the materials. A typical example is the graphene-like materials that possess the honeycomb structure. And many experimental and theoretical achievements have been made to realize and understand its related novel phenomena \cite{Haldane2008, Hsu2016, Neto2009, Wallace1947}. Due to the underlying geometric symmetries, the honeycomb-structured materials guarantee the existence of Dirac cones in the dispersion surfaces, where two adjacent bands touch each other conically. Such degeneracy carries topology indices, which lead to many topologically protected behaviors \cite{LeeThorp2018}. Recently, Wu and Hu proposed a dielectric material with a generalized honeycomb structure \cite{Wu2015}. This periodic structure has six inclusions inside a hexagonal cell such that the crystal is $\pi/3$-rotation invariant. Based on this structure, high-order Chern topological states are realized \cite{Yves2017}. 
	
	From the mathematical aspects, much progress has been made on honeycomb-structured materials. The pioneering work by Fefferman and Weinstein provided the first rigorous justifications for the existence of Dirac points on the dispersion surfaces of the two-dimensional Schr\"odinger operator with honeycomb lattice potentials \cite{Fefferman2012}. Following their framework,  LeeThorp et al. extended the existence of Dirac points to general elliptic operators with certain honeycomb symmetries \cite{LeeThorp2018}. Besides, other issues are investigated based on honeycomb structures, such as the existence of topologically protected edge states \cite{Fefferman_2016, Fefferman2020}, conical diffractions of wave-packet dynamics associated with Dirac points \cite{Fefferman2013}, and so on. Different types of conical degenerate spectrum points, such as Weyl points in three-dimensional systems, have also been demonstrated with similar approaches \cite{Guo2022}. In discrete setups, tight-binding models are usually used. Many people have extensively investigated them with honeycomb lattices and the connections with their continuous versions. See \cite{Ablowitz2009, Fefferman_2017} to name a few. 
	
	Recently, Ammari and his collaborators successfully applied the integral equation approach to investigate the Helmholtz systems with piece-wise constant material weights. Motivated by the Minnaert resonances for acoustic waves \cite{Ammari_2018, Ammari_2017}, they apply the Gohberg-Sigal theory and layer potential theory \cite{Ammari2018, Ammari2009} to rigorously prove the presence of Dirac points in subwavelength frequencies for honeycomb-structured materials. They paved the way to understanding dielectric or acoustic materials in subwavelength regimes. Many other exciting results are also proved on the edge states\cite{Ammari2020b}, periodically driven systems\cite{Ammari_2022}, and so on.
	
	In this paper, we shall investigate the generalized honeycomb-structured materials proposed by Wu and Hu\cite{Wu2015}. To our knowledge, most results are restricted to numerical simulations of the dispersion surfaces. Very few mathematical analyses have been done. A previous result studied the Schr\"odinger operator equipped with specific potentials possessing symmetries similar to generalized honeycomb structures\cite{Cao2022}. They rigorously proved the existence of double Dirac cones using Fefferman and Weinstein's framework\cite{Fefferman2012}. Dielectric or acoustic materials with generalized honeycomb-structured inclusions have not been mathematically investigated. This paper aims to give a mathematical theory for the dispersion surfaces and associated eigenfunctions of generalized honeycomb-structured materials in the subwavelength regimes. 
	\subsection{Main Results and Contributions}
	In this article, we first characterize the symmetries of generalized honeycomb-structured materials. One certain configuration with additional symmetry will be called super honeycomb-structured material. Two types of deformations that break the additional symmetry are defined consequently. Then we investigate following mathematical problems.
	\begin{itemize}
		\item \emph{Local band gap and eigenfunctions}: \Cref{thm:LocalBandGap} induces the opening of band gap. We first recast the eigenvalue problem equivalently into a characteristic value problem of a boundary integral operator. Motivated by the results in\cite{Ammari2020a}, we then rescale the quasi-periodicity $ \alpha $ to obtain a uniform asymptotic approximation to the single layer potentials in the vicinity of $ \alpha = 0 $. Finally, we relate the periodic capacitance matrix with the original problem asymptotically, as shown in \Cref{thm:EigValAsympt}. By carefully investigating the capacitance matrix, we finally prove the existence of local band gap under both types of deformations. What is more, we numerically verified the absence of the double Dirac cone and the occurrence of a local spectral gap. Corresponding eigenfunctions are obtained to show the topological difference between the two symmetry-breaking modifications to the materials. 
		\item \emph{Existence of double Dirac cones}: \Cref{thm:FourDirac} guarantees the presence of double Dirac point at the center of Brillouin zone. By the additional symmetry of super honeycomb-structured material, a band folding theorem, \Cref{thm:Decomposition}, is proved to decompose layer potentials near $ \alpha=0 $. Then, we can prove the existence of double Dirac cone as a corollary of results in \cite{Ammari2020a}. %  (Add corollary, add the degeneracy at $ \alpha=0 $ for subwavelength frequency)
	\end{itemize}
	\begin{remark}
		Topological distinctions which are indicated by \Cref{proposi:EigFuncPer} and its consequences are used to realize topologically protected edge states, which have been experimentally realized \cite{Yang2018}. This will be analyzed in our future work. 
	\end{remark}
	\subsection{Outlines}
	The paper is organized as follows. In \Cref{sec:FormulationAndPreliminary}, we first formulate the problem and review some well-known results on the layer potentials. In \Cref{sec:AsymBandStruc}, the periodic capacitance matrix is defined by asymptotic expansion of Green's function. From this end, we study the asymptotic behaviors of the eigenvalue problem. In \Cref{sec:structure}, we will study the structure of the periodic capacitance matrix. Given the additional symmetries, the existence of a double Dirac point at $ \Gamma $ is rigorously proven in \Cref{sec:ConeStructure}. A direct numerical simulation based on a finite element method is presented in \Cref{sec:NumericalTest}. 
	
	\section{Problem Formulation and Preliminaries} \label{sec:FormulationAndPreliminary}
	In this section, the eigenvalue problem of generalized honeycomb-structured materials is formulated. Following the notations in \cite{Ammari2018}, we briefly review the layer potential theory for periodic inclusions.
	\subsection{Generalized Honeycomb-structured Inclusions}\label{subsec:SuperHoneySymmetry}
	Consider the set of lattice points $ \Lambda \triangleq \{ n_1l_1 + n_2l_2: n_1, n_2\in \mathbb{Z} \} $, where $ l_1, l_2 $ are vectors in $ \mathbb{R}^2 $ given by
	\begin{equation}\label{eqn:BasisVector}
		l_1 = (1/2,-\sqrt{3}/2)^T,\quad l_2 = (1/2,\sqrt{3}/2)^T.
	\end{equation}
	The parallelogram region $\{ sl_1+tl_2:s,t\in [0,1] \}$ can be reformed to the the hexagonal unit cell enclosed by $ A,B,C,D,E,F $ by periodicity, where $ A= (-l_1+l_2)/3, B = (l_1+2l_2)/3,C = (2l_1+l_2)/3, D = -A, E = -B, F = -C $. We denote the hexagonal region by $Y$. See \Cref{fig:Lattice}. \par 
	We introduce the following three matrices $ R $, $ T_x $ and $ T_y $,
	\begin{equation}
		R\triangleq \begin{pmatrix}
			\frac{1}{2} & \frac{\sqrt{3}}{2} \\
			-\frac{\sqrt{3}}{2} & \frac{1}{2}
		\end{pmatrix},\quad 
		T_x \triangleq \begin{pmatrix}
			1 & 0 \\
			0 & -1
		\end{pmatrix},\quad 
		T_y \triangleq \begin{pmatrix}
			-1 & 0 \\
			0  & 1
		\end{pmatrix}.
	\end{equation}
	They correspond to the $ -\pi/3 $-rotation matrix, the reflection with respect to x-axis and y-axis. And we assume that $ D_0 $ is a connected open domain containing the origin $ O $ with piece-wise smooth boundary such that:
	\begin{itemize}
		\item[(i)] $ R^2D_0 = D_0,\quad T_x D_0 = D_0; $
		\item[(ii)] $ \operatorname{diam}(D_0) \triangleq \sup_{x,y\in D_0}\Vert x-y \Vert_{\mathbb{R}^2} < 1/4.$ Here $ \Vert \cdot \Vert_{\mathbb{R}^2} $ denotes the usual Euclidean norm.
	\end{itemize}
	By translations and rotations we define $ \{D_{j}\}_{j=1}^6 $ by 
	\[ D_{1} \triangleq D_0 -\frac{1}{3}(1+\sigma)l_1,
	\quad D_{j} \triangleq R^{j-1}D_1,\;j=2,3,\ldots,6. \] 
	Here and thereafter the parameter $ \sigma $ is chosen such that $ \cup_{j=1}^6 D_j\Subset Y $ and has six connect components. It is easy to see that the point $ P_i $ is contained in $ D_{i} $, where $ P_1 = -(1+\sigma)l_1/3 $, $ P_{j} = R^{j-1}P_{1} $ for $ j=2,3,\ldots 6 $. We let $ D \triangleq \cup_{j=1}^{6} D_{j} $. And its periodic tiling $ D_{per} $ can be defined by 
	\begin{equation}
	    D_{per} = \{ x+n_1l_1 + n_2l_2: x\in D, n_1,n_2\in \mathbb{Z} \}.
	\end{equation}\par 
	In this article, the periodic inclusions $D_{per}$ will be called \emph{generalized honeycomb-structured inclusions}. Specifically for $ \sigma=0 $, the inclusions $ D_{per} $ will be called the \emph{super honeycomb-structured inclusions}. When $ \sigma < 0(>0)$, we will say that the inclusions are \emph{contracted}(\emph{dilated}). An example of $D_{per} $ when $\sigma=0$ is illustrated in \Cref{fig:Lattice}. While in \Cref{fig:LatticePert} we illustrate the inclusions when they are either contracted or dilated. The corresponding super honeycomb-structured inclusions in unit cell are plotted in dash lines. 
	\begin{figure}[htbp] 
		\centering
		\vspace{-0.3cm}
		\includegraphics[width=0.4\textwidth]{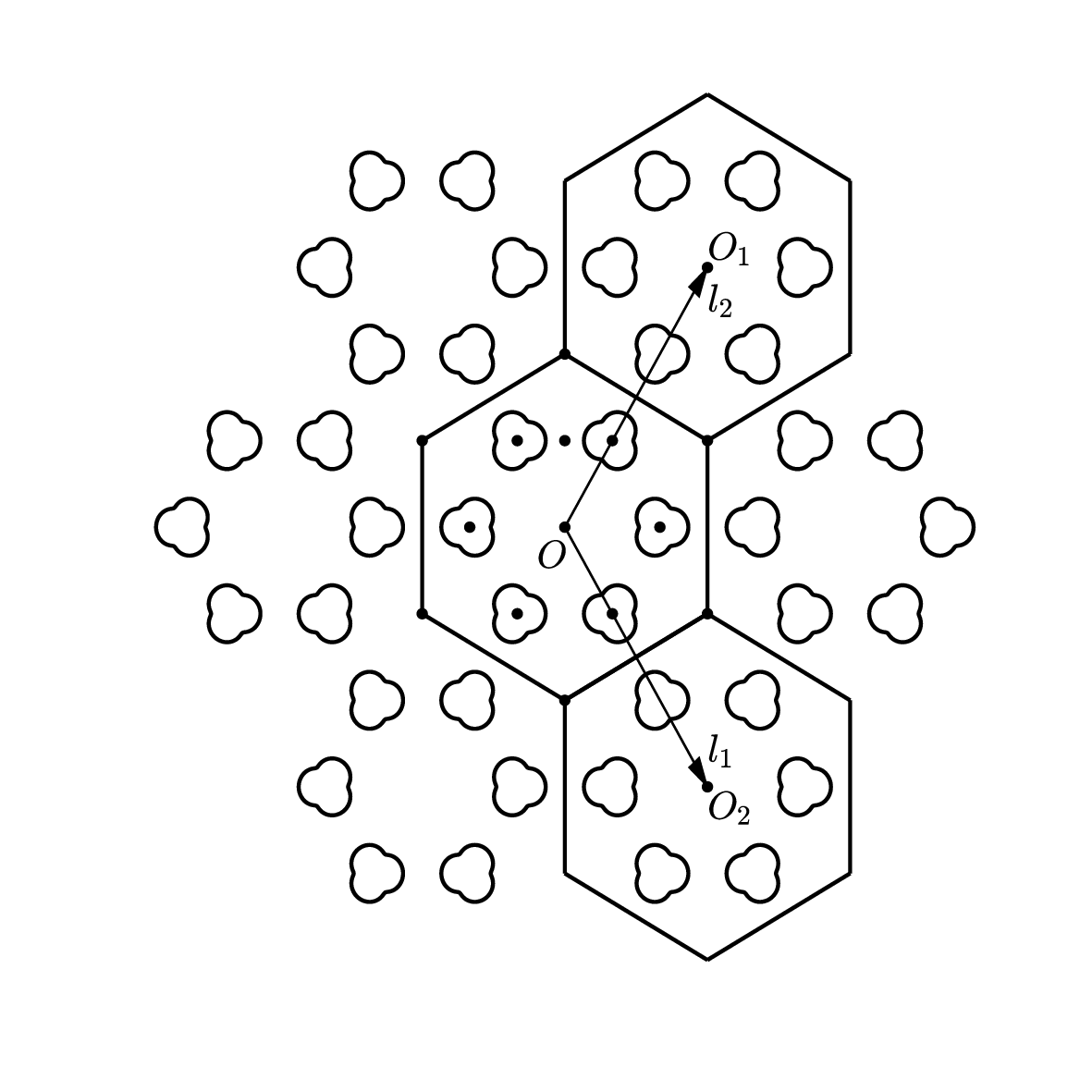}
		\hspace{-0.3cm}
		\includegraphics[width=0.45\textwidth]{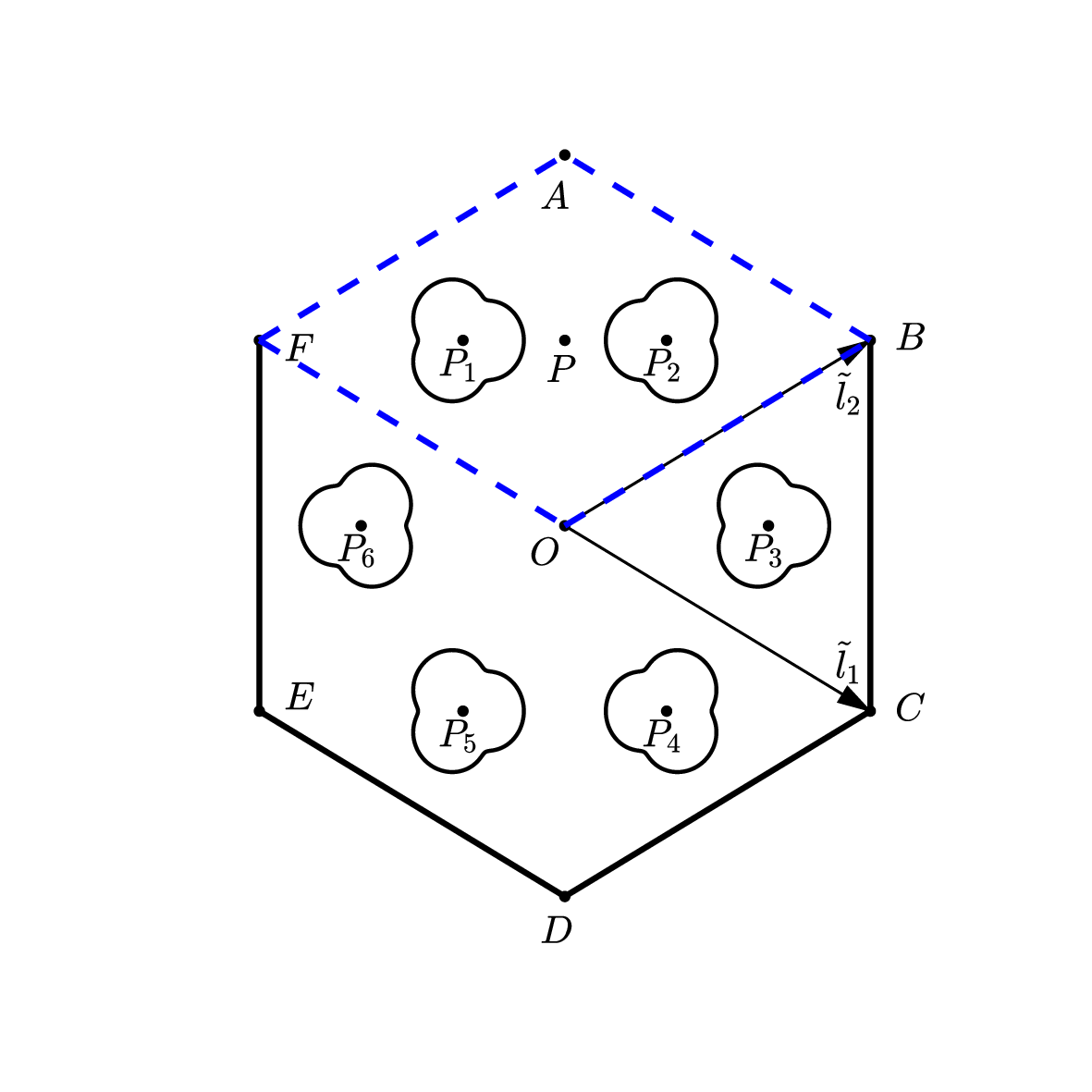}
		\vspace{-0.4cm}
		\caption{Left: honeycomb-structured inclusions. The unit cell $ Y $ is enclosed in black bold line. Right: illustration of the unit cell $Y$. The region $\tilde{Y}$ is enclosed in blue dash lines. }
		\label{fig:Lattice}
	\end{figure} 
	\begin{figure}[htbp] 
		\centering
		\vspace{-0.7cm}
		\includegraphics[width=0.26\textwidth]{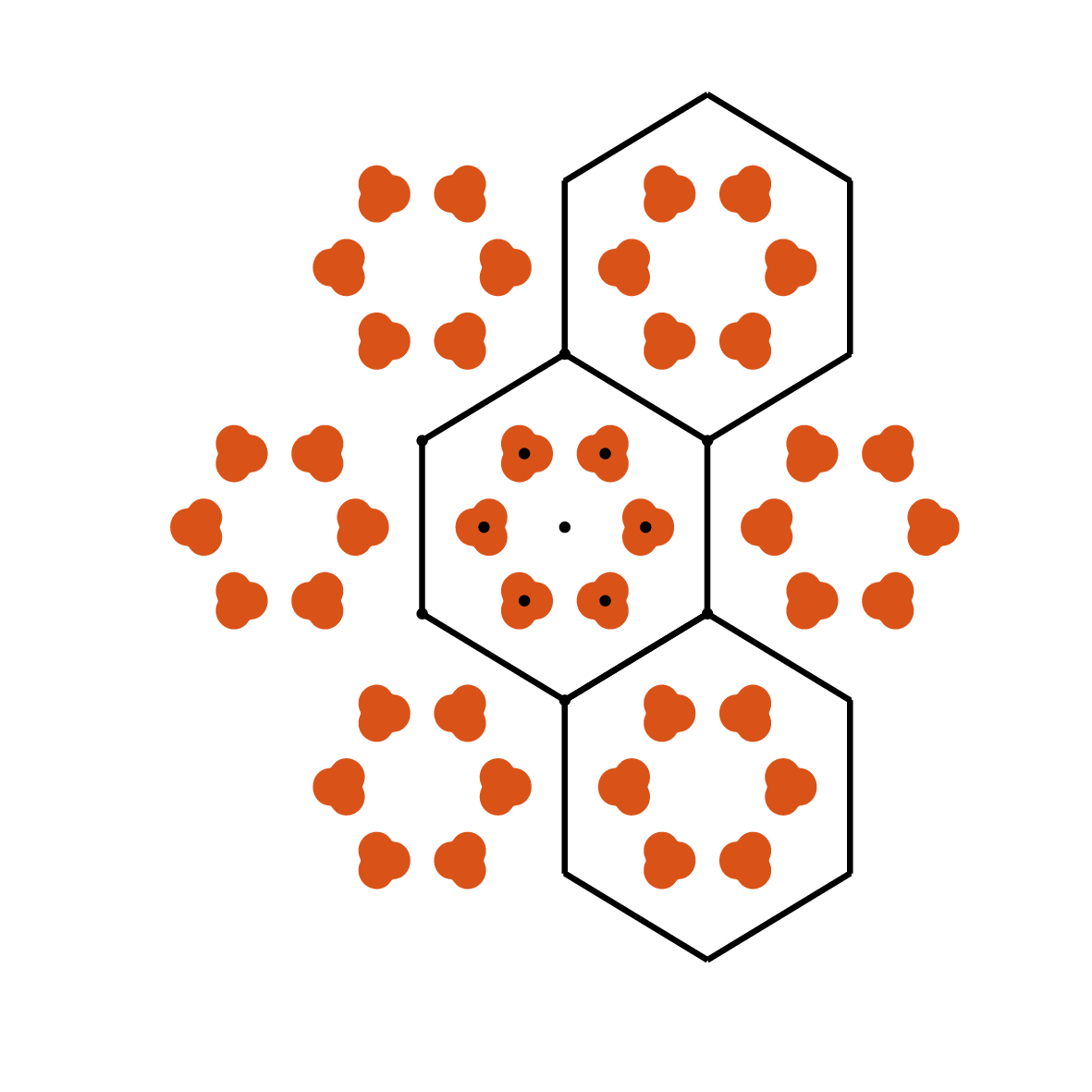}
		\hspace{-0.5cm}
		\includegraphics[width=0.26\textwidth]{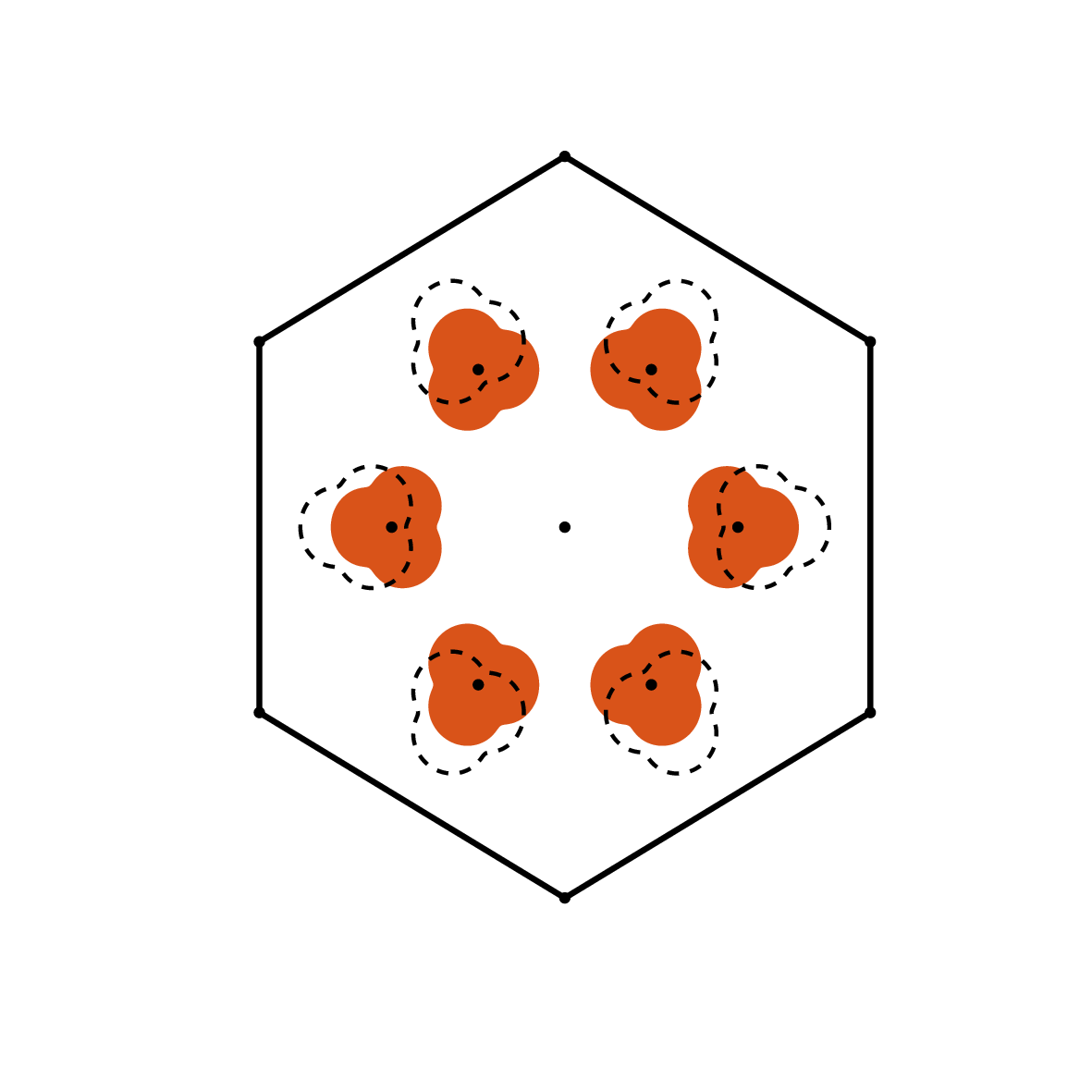}
		\hspace{-0.5cm}
		\includegraphics[width=0.26\textwidth]{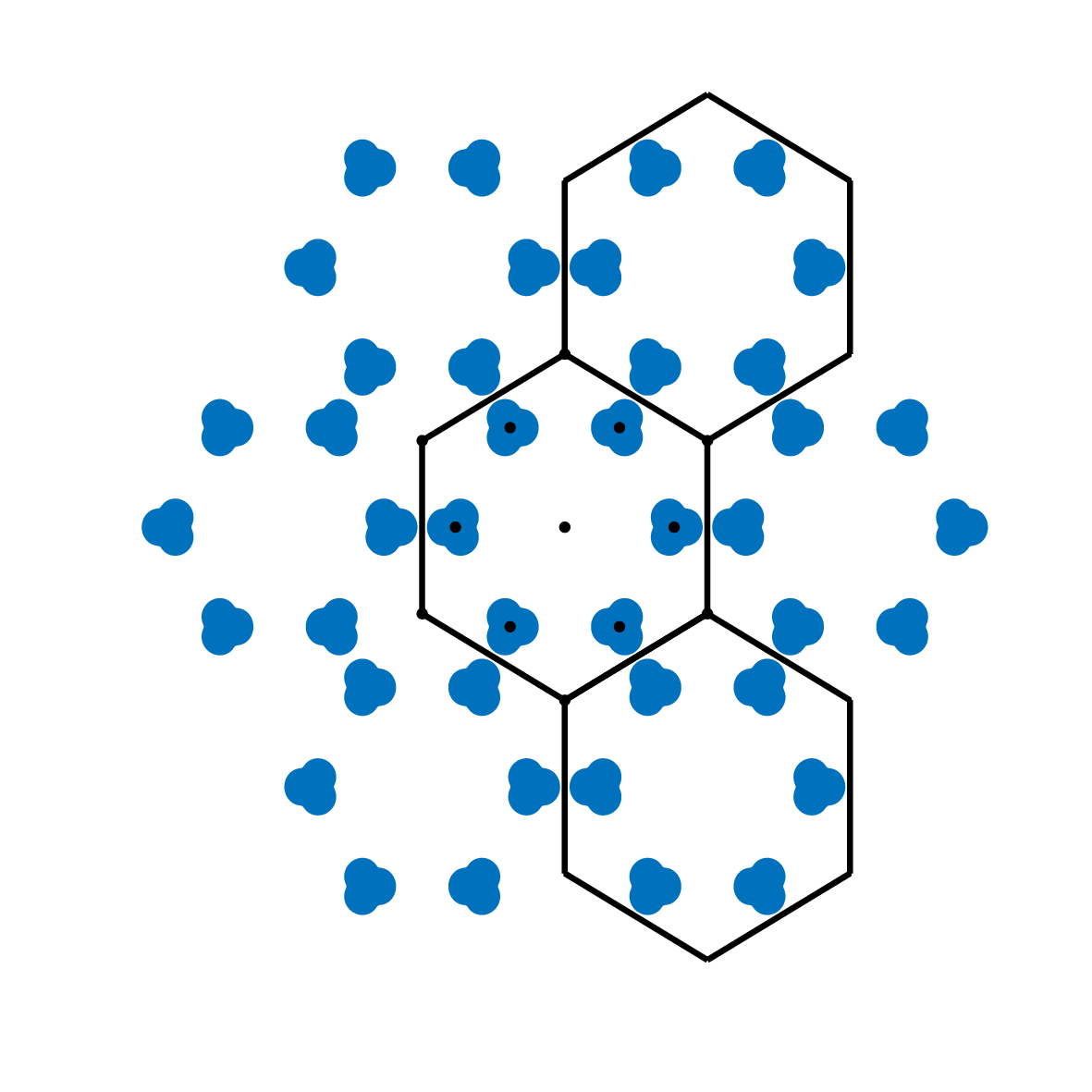}
		\hspace{-0.5cm}
		\includegraphics[width=0.26\textwidth]{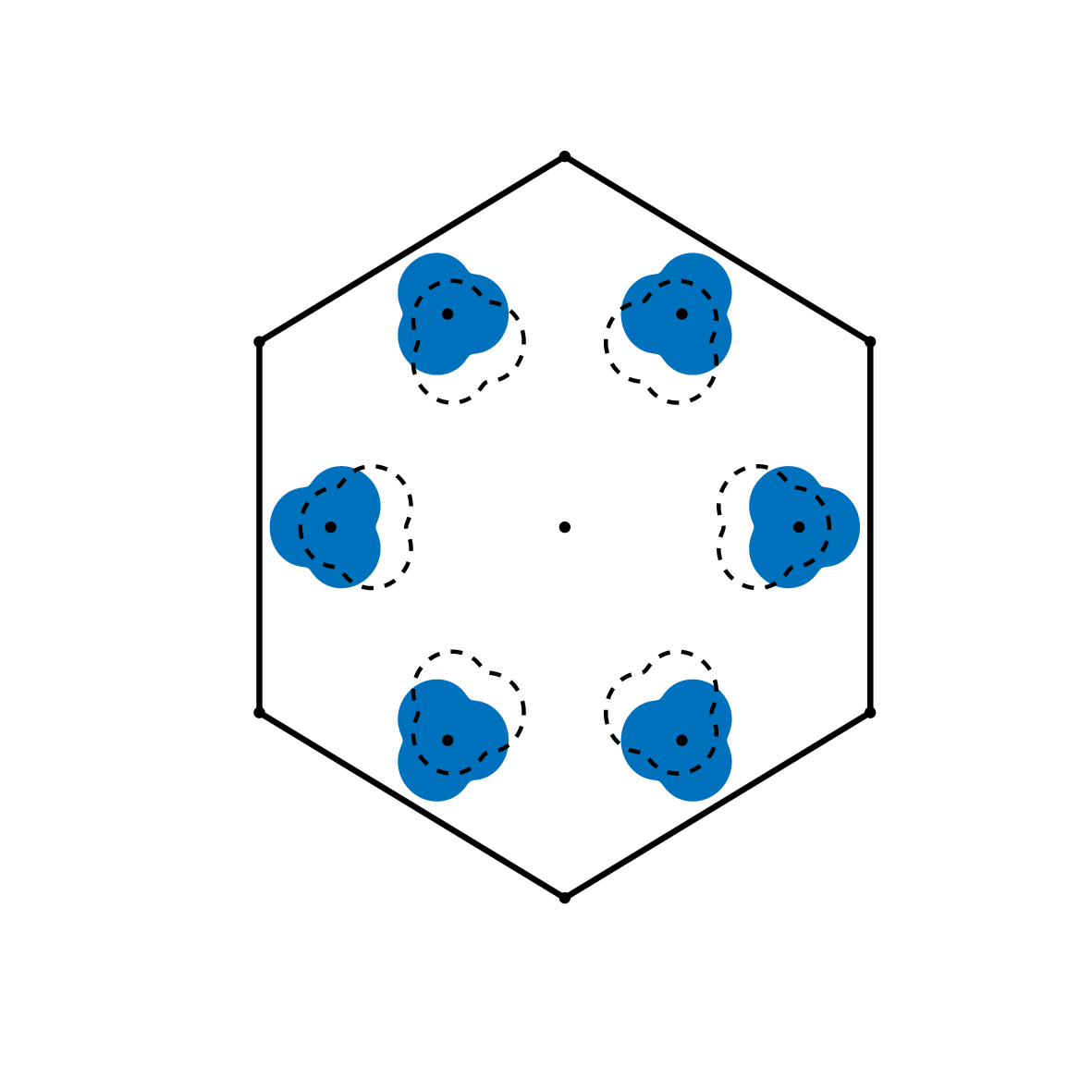}
		\vspace{-0.4cm}
		\caption{Left: illustrations of the generalized honeycomb-structured inclusions and the unit cell when the inclusions are contracted. Here $ \sigma  $ is negative. Right: illustration of the generalized honeycomb-structured inclusions and the unit cell when the inclusions are dilated. Here $ \sigma  $ is positive. The corresponding super honeycomb-structured inclusions are plotted in dash lines.}
		\label{fig:LatticePert}
	\end{figure}\par 
	Through simple calculations, one can verify that the generalized honeycomb-structured inclusions is invariant under the $ \pi/3 $-rotation at $ O $ and the reflection with respect to y-axis, which is equivalent to $ RD_{per} = D_{per}, T_yD_{per} = D_{per} $. \par 
	The dual lattice of $ \Lambda $ is defined as $ \Lambda^{\prime} \triangleq \{ n_1k_1+n_2k_2:n_1,n_2\in \mathbb{Z} \}, $ with vectors $ k_1 $ and $ k_2 $ satisfying
	\begin{equation}\label{eqn:DualRelation}
		k_i\cdot l_{j} = 2\pi \delta_{ij},\quad i,j = 1,2.
	\end{equation}
	The Brillouin zone is defined by $ Y^{\prime}\triangleq \mathbb{R}^2/\Lambda^{\prime} $. For convenience we choose the representation $ Y^{\prime} \simeq \{ sk_1+tk_2:s,t\in [0,1) \}. $
	\subsection{Problem Formulation}
	Given the geometry properties, we now define the eigenvalue problem in the generalized honeycomb-structured crystal, 
	\[ \nabla \cdot ( \frac{1}{\rho(x)} \nabla u(x) ) +\frac{\omega^2}{\kappa(x)} u(x)=0,\quad  u(x)\in \mathbb{L}^2(\mathbb{R}^2). \]
	Here the material weights $ \rho(x),\kappa(x) $ are given by 
	\[ \rho(x) \triangleq \rho_{0}\chi_{_{\mathbb{R}^2\backslash D_{per}}}(x) + \rho_{1}\chi_{_{D_{per}}}(x),\quad \kappa(x)\triangleq \kappa_{0}\chi_{_{\mathbb{R}^2\backslash D_{per}}}(x) + \kappa_{1}\chi_{_{D_{per}}}(x), \]
	where $ \chi_{_{D_{per}}} $ and $ \chi_{_{\mathbb{R}^2\backslash D_{per}}} $ are the characteristic functions of $ D_{per} $ and $ \mathbb{R}^2\backslash D_{per} $. And $ \rho_{0},\kappa_{0} $ and $ \rho_{1},\kappa_{1} $ denote the densities and bulk moduli outside and inside the inclusions, respectively. We introduce
	\[
	v_0\triangleq  \sqrt{\frac{\kappa_{0}}{\rho_{0}}},\;v_1\triangleq\sqrt{\frac{\kappa_{1}}{\rho_{1}}},\;k_0 \triangleq \frac{\omega}{v_0},\;k_1 \triangleq \frac{\omega}{v_1},\;\delta \triangleq \frac{\rho_{1}}{\rho_{0}}.
	\]
	By Floquet-Bloch decomposition, the eigenvalue problem in $\mathbb{L}^2(\mathbb{R}^2)$ is reduced to the quasi-periodic eigenvalue problem for all $\alpha\in Y'$. Combining the jump relations, we can formulate the $\alpha$-quasi-periodic problem for $ \alpha\in Y' $:
	\begin{gather}\label{eqn:ProblemFormulation}
		\left\{\begin{aligned}
			&\Delta u + k_0^2 u = 0,\quad x\in \mathbb{R}^2\backslash \overline{D_{per}},\\
			&\Delta u + k_1^2 u = 0,\quad x\in D_{per}, \\
			&u|_{+}-u|_{-}=0,\quad y\in \partial D_{per},\\
			&\delta\left.n_y\cdot \nabla u \right|_{+} - \left.n_y\cdot \nabla u\right|_{-}=0,\quad y\in \partial D_{per},\\
			&u(x+l) = \eu^{\iu\alpha\cdot l}u(x),\quad \forall x\in \mathbb{R}^2,l\in \Lambda.
		\end{aligned}\right.
	\end{gather}   
	Here, $ n_y\cdot \nabla u $ denotes the normal derivative of $ u $ at $ y\in \partial D_{per} $, and the subscripts $ + $ and $ - $ denotes the limit from outside and inside of $ D_{per} $. For any $ \alpha\in Y^{\prime} $, the eigenvalue problem \eqref{eqn:ProblemFormulation} has discrete eigenvalues $ \{\omega^2_n(\alpha,\sigma,\delta)\} $ which are non-decreasing and tend to infinity as $ n\to +\infty $.  \par 
	The parameter $ \delta $ denotes the contrast in the density and we assume that $ \delta $ is small, while the wave speed are comparable and of $ \mathcal{O}(1) $, i.e.,
	\[ 0<\delta \ll 1\quad \text{and}\quad v_0,v_1 = \mathcal{O}(1). \]
	In this article, we say that a frequency $ \omega $ is a \emph{subwavelength frequency} if $ \omega^2= \mathcal{O}(\delta) $. And the problem of interest is the subwavelength frequency when the quasi-periodicity vector $ \alpha $ is close to the $ \Gamma $ point, namely the point $ \alpha=0 $. We are also interested in the behavior of band structures when the inclusions are contracted or dilated. 
%	\textcolor{red}{Our main theorems are \Cref{thm:EigValAsympt} and \Cref{thm:FourDirac}. We rigorously prove the existence of double Dirac point at $ \alpha=0 $ when the inclusions $ D_{per} $ are super honeycomb-structured inclusions. Then we study the asymptotic behavior of band structure when the inclusions are contracted or dilated, that is to say, when $\sigma<0$ or $\sigma>0$. And we show that the second and third bands will detach from the fourth and fifth bands.} 
	\begin{remark}
	Notice that the subwavelength regime is different from the tight binding regime\cite{Ablowitz_2012a,Fefferman_2017} or when the density $ \rho(x) $ is very small outside the inclusion\cite{Cassier2021}.
	\end{remark}
	\subsection{Green's Functions and Layer Potentials}
	Consider the quasi-periodic Green's function $G^{\alpha,\omega}$ defined by the spectral representation
	\begin{equation}\label{eqn:QuasiGreensFunc}
		G^{\alpha,\omega}(x) = \frac{1}{|Y|}\sum_{q\in \Lambda^{\prime}} \frac{\eu^{\iu(\alpha + q)\cdot x}}{\omega^2 - |\alpha+q|^2},
	\end{equation} 
	where $ \omega \neq |\alpha + q| $ for all $ q\in \Lambda' $. If both $ \omega $ and $ \alpha $ are zero, we define the periodic Green's function $G^{0,0}$ by 
	\begin{equation}\label{eqn:PeriodicGreenFunc}
		G^{0,0}(x) = \frac{1}{|Y|}\sum_{q\in \Lambda^{\prime}\backslash\{0\}} -\frac{\eu^{\iu q\cdot x}}{ |q|^2}.
	\end{equation}
	Now we define the quasi-periodic single layer potential of the density function $\phi\in \mathbb{L}^2(\partial D) $ as 
	\begin{equation}
		\mathcal{S}_{D}^{\alpha,\omega}[\phi] \triangleq \int_{\partial D} G^{\alpha,\omega}(x-y)\phi(y)d\sigma(y).
	\end{equation}
	Then the following well-known jump relations are implied directly \cite{Ammari2018}
	\begin{equation}\label{eqn:JumpSingle}
		\left.n_y\cdot \nabla\mathcal{S}_{D}^{\alpha,\omega}[\phi]\right|_{\pm} = \pm\frac{\phi(y)}{2}+(\mathcal{K}^{-\alpha,\omega}_{D})^{\ast}[\phi](y), 
	\end{equation}
	where the Neumann-Poincar\'e operator $ (\mathcal{K}_{D}^{-\alpha,\omega})^{\ast} $ is defined by 
	\begin{equation}
		(\mathcal{K}_{D}^{-\alpha,\omega})^{\ast}[\varphi](x) \triangleq \operatorname{p.v.}\int_{\partial D} n_{x} \cdot \nabla G^{\alpha,\omega}(x-y)\varphi(y)d\sigma(y),\quad x\in \partial D.
	\end{equation}
	Its $ \mathbb{L}^2(\partial D) $ adjoint operator $ \mathcal{K}_{D}^{\alpha,\omega} $ is given by
	\begin{equation}
		\mathcal{K}_{D}^{\alpha,\omega}[\varphi](x) \triangleq \operatorname{p.v.}\int_{\partial D} n_{y} \cdot \nabla G^{\alpha,\omega}(x-y)\varphi(y)d\sigma(y),\quad x\in \partial D.
	\end{equation}
	One can prove that the single layer potential is invertible when it is regarded as an operator $ \mathcal{S}_{D}^{\alpha,\omega}:\mathbb{L}^2(\partial D)\to \mathbb{H}^{1}(\partial D) $ when $ \omega $ is small enough and $ \omega\neq|\alpha+q| $ for all $ q\in \Lambda^\prime $, see Lemma 3.1 in \cite{Ammari2020a}. 
	\subsection{Integral Equation Formulation}
	By layer potential theory, the original problem \eqref{eqn:ProblemFormulation} can be recast into an integral equation problem. We represent the solution by single layer potentials if $ (\omega,\alpha) $ is nonzero:
	\begin{gather}\label{eqn:RepresentationLayer}
		u(x) = \left\{\begin{aligned}
			&\mathcal{S}^{\alpha,k_1}_D[\phi](x),\quad x\in D,\\
			&\mathcal{S}^{\alpha,k_0}_D[\psi](x),\quad x\in Y\backslash \overline{D},
		\end{aligned}\right.
	\end{gather}
	Here $ \phi,\psi\in \mathbb{L}^{2}(\partial D) $ and are determined by following equations:
	\begin{gather}\label{eqn:BoundaryMatch}
		\left\{ \begin{aligned}
			&\mathcal{S}^{\alpha,k_1}_D[\phi](y) = \mathcal{S}_D^{\alpha,k_0}[\psi](y)\\
			&-\frac{\phi(y)}{2} + (\mathcal{K}^{-\alpha,k_1}_{D})^{\ast}[\phi](y) = \delta\left( \frac{\psi(y)}{2} + (\mathcal{K}^{-\alpha,k_0}_{D})^{\ast}[\psi](y) \right)
		\end{aligned}\right.,\quad y\in \partial D. 
	\end{gather}\par 
	If $ (\omega,\alpha) $ is zero, we will prove in the appendix that the harmonic periodic solutions can be represented by single layer potential up to a constant, see the discussions in \Cref{apsec:RepresentSol}. So the usual procedures (see \cite{Ammari2020}) to investigate the band structure near $\alpha = 0$ fail due to the lack of invertibility of $\mathcal{S}^{0,0}_{D}$. The singularity will also arise when we consider the band structures in subwavelength regime. More importantly, we have to first justify the representation \eqref{eqn:RepresentationLayer} of the eigenfunctions corresponding to subwavelength eigenvalues.  \par 
	Fortunately, again by the discussions in \Cref{apsec:RepresentSol}, one only has to take care of constant functions. And the characteristic value problem of the operator $ \mathcal{A}^{\alpha ,\omega}_{\delta}: (\mathbb{L}^2(\partial D))^2\to (\mathbb{L}^2(\partial D))^2$ given by 
	\begin{equation}
		\mathcal{A}^{\alpha ,\omega}_{\delta}\begin{pmatrix}
			\phi\\ \psi  
		\end{pmatrix}\triangleq \begin{Bmatrix}
			\mathcal{S}^{\alpha,k_1  }_{D}[\phi]    -\mathcal{S}^{\alpha,k_0}_{D}[\psi]\\
			-\frac{1}{2}\phi + \left(\mathcal{K}^{-\alpha,k_1}_{D}\right)^{\ast}[\phi]  -\delta\left[\frac{1}{2}\psi + \left(\mathcal{K}^{-\alpha,k_0}_{D}\right)^{\ast}[\psi]\right] 
		\end{Bmatrix},
	\end{equation}
	is still able to represent the corresponding eigenvalues of \eqref{eqn:ProblemFormulation}. Here notice that $ \mathcal{A}^{\alpha,\omega}_{\delta} $ is a function of $ \omega $, since $ k_0 $ and $ k_1 $ are functions of $ \omega $. With this definition, \eqref{eqn:BoundaryMatch} is equivalent to 
	\begin{equation}
		\mathcal{A}^{\alpha ,\omega}_{\delta}\begin{pmatrix}
			\phi\\ \psi  
		\end{pmatrix} = \begin{pmatrix}
			0\\ 0 
		\end{pmatrix}.
	\end{equation}
	By the properties of elliptic eigenvalue problem, the above integral equations has non-trivial solutions for some discrete frequencies $ \omega $. And they can be viewed as the characteristic values of the operator-valued analytic function $ \mathcal{A}^{\alpha ,\omega}_{\delta}:\mathbb{C}\to [(\mathbb{L}^2(\partial D))^2\to (\mathbb{L}^2(\partial D))^2]$. 
	
	\section{Asymptotic Behavior of Band Structure}\label{sec:AsymBandStruc}
	This section is devoted to characterizing the band structure in subwavelength regime of generalized honeycomb-structured inclusions $ D $ for arbitrary $ \sigma $. %To this end, we first describe the asymptotic behavior of Green's functions when both $ \omega $ and $ |\alpha| $ are small. Then by properly defining the periodic capacitance matrix $\mathbf{C}^{0}$, we obtain the asymptotic behavior of band structures near $ \alpha=0 $. 
	We will asymptotically determine the subwavelength frequencies by the eigenvalues of capacitance matrix $\mathbf{C}^{0}$, which will be defined later. We will prove the following result:
	\begin{theorem}\label{thm:EigValAsympt}
	 	The value $ \{\omega_{n}^2(\alpha,\sigma,\delta)\}_{n=1}^{6} $ are approximated by   
	 	\[
	 	\omega_{n}^2(\alpha,\sigma,\delta)= \frac{6\delta \lambda_{n}v_1^2}{|D|} + \mathcal{O}(\delta^2),
	 	\]
	 	uniformly for $ |\alpha|< \min(k_1^2,k_0^2) $. Here $ |D| $ is the area of the inclusions in the unit cell, and $ \lambda_{n} $ are the eigenvalues of the periodic capacitance matrix $ \mathbf{C}^{0} $, which are arranged in ascending order.
	\end{theorem}
	\subsection{Asymptotic Expansion of Green's Function}
	In order to study the asymptotic behavior near $ \alpha=0 $, we set $\alpha = \omega^2\alpha_0$ for some $ \alpha_0 \in Y^{\prime} $ independent of $ \omega $. When $\omega\to 0$ and $ \alpha\neq 0 $, we have, from \eqref{eqn:QuasiGreensFunc},
	\begin{equation}\label{eqn:AsymExpZero}
		G^{\omega^2\alpha_0,\omega}(x) = \frac{1}{|Y|}\Big(\frac{1}{\omega^2} + \iu \alpha_0 \cdot x + |\alpha_0|^2\Big) +  G^{0,0}(x) +\omega^2G^{\alpha_0}_2 + o(\omega^2).
	\end{equation}
	Here $ G^{\alpha_0}_2 $ is defined as the second order term in the above expansion. From this we obtain an $ \mathcal{O}(\omega^2) $ approximation of $ \hat{S}^{\omega^2\alpha_0,\omega}_D $: 
	\begin{gather} \label{eqn:SingleLayerHatOperator}
		\begin{aligned}
			\hat{S}^{\omega^2\alpha_0,\omega}_D[\varphi](x) &\triangleq \mathcal{S}_D^{0,0}[\varphi](x) + \frac{1}{|Y|}\left(\frac{1}{\omega^2}+|\alpha_0|^2\right)\int_{\partial D}\varphi(y) d\sigma(y) \\&\quad+ \frac{\iu}{|Y|}\int_{\partial D}\alpha_0\cdot(x-y)\varphi(y)d\sigma(y).
		\end{aligned}
	\end{gather}
	Here $ \mathcal{S}^{0,0}_{D} $ denotes the periodic single layer potential associated to $ G^{0,0} $. Although the operator $ \mathcal{S}^{0,0}_{D} $ is not invertible in general, it can be proved that for small but nonzero $\omega$, the operator $ \hat{S}^{\omega^2\alpha_0,\omega}_D $ is invertible \cite{Ammari2020a}. However, the kernel of periodic single layer potential can be characterized following the same spirit of \cite{Ammari_2021,Ammari2020a}:
	\begin{lemma}\label{lem:uniqueness}
		If $\mathcal{S}^{0,0}_{D}[\varphi] = K \chi_{_{\partial D}}$ for some constant $ K $ and some $ \varphi\in\mathbb{L}^2_0(\partial D)\triangleq \{ \phi:\phi\in \mathbb{L}^2(\partial D),\int_{ \partial D}\phi d\sigma = 0\} $, then $ \varphi \equiv0 $.
	\end{lemma}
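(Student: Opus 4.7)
The plan is to realize the identity $\mathcal{S}^{0,0}_D[\varphi]=K\chi_{\partial D}$ as a boundary condition for a periodic harmonic function on the flat torus and then extract $\varphi$ from the normal-derivative jump. Define $u(x)\triangleq \mathcal{S}^{0,0}_D[\varphi](x)$ for $x\in \mathbb{R}^2$. From the spectral representation \eqref{eqn:PeriodicGreenFunc} one computes, in the distributional sense,
\begin{equation*}
    -\Delta G^{0,0}(x)=\sum_{\ell\in\Lambda}\delta_{\ell}(x)-\frac{1}{|Y|},
\end{equation*}
so $\Delta u(x) = \tfrac{1}{|Y|}\int_{\partial D}\varphi\,d\sigma$ for $x\in Y\setminus \partial D$. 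Since $\varphi\in \mathbb{L}^2_0(\partial D)$ has zero mean, this yields $\Delta u=0$ in $D$ and in $Y\setminus\overline{D}$. Also $u$ is $\Lambda$-periodic by construction, and the standard continuity of the single layer potential across $\partial D$ (in two dimensions the logarithmic singularity cancels) gives $u\in C^0(\mathbb{R}^2)$ with $u|_{\partial D}=K$.

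Now view $u$ as a continuous, $\Lambda$-periodic function on the flat torus $\mathbb{T}^2=\mathbb{R}^2/\Lambda$ that is harmonic on $\mathbb{T}^2\setminus\partial D$. Inside $D$, the Dirichlet problem $\Delta u=0$ in $D$, $u=K$ on $\partial D$ has the unique solution $u\equiv K$. In the complementary domain $\Omega\triangleq \mathbb{T}^2\setminus\overline{D}$, the function $u$ is harmonic, continuous up to $\partial\Omega=\partial D$, with boundary value $K$; since $\Omega$ is a bounded domain on the torus, the maximum principle forces $u\equiv K$ in $\Omega$ as well. Thus $u\equiv K$ on all of $\mathbb{R}^2$.

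Finally, applying the standard jump relation \eqref{eqn:JumpSingle} for the single layer potential at $\omega=0$, $\alpha=0$, we have
\begin{equation*}
    \varphi(y)=\left.n_y\cdot\nabla u\right|_{+}(y)-\left.n_y\cdot\nabla u\right|_{-}(y),\qquad y\in\partial D.
\end{equation*}
Because $u$ is identically constant on both sides of $\partial D$, both normal derivatives vanish, and hence $\varphi\equiv 0$. The main subtlety is step two: one must verify that $u$ is genuinely smooth enough away from $\partial D$ and continuous across it so that the Dirichlet uniqueness argument applies. This is purely a matter of invoking the mapping properties of $\mathcal{S}^{0,0}_D$ recalled at the end of the preceding subsection, so no new technical machinery is required.
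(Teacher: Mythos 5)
Your proof is correct and follows essentially the same route as the paper: conclude from Dirichlet uniqueness (here spelled out via the torus maximum principle, using the zero-mean condition to get harmonicity off $\partial D$) that $\mathcal{S}^{0,0}_{D}[\varphi]$ is constant, then read off $\varphi$ from the normal-derivative jump relation \eqref{eqn:JumpSingle}. The only blemish is the sign in your distributional identity for $\Delta G^{0,0}$ (compare \eqref{apeqn:PeriodicGreen}), which is immaterial since the mean-zero hypothesis kills the constant term either way.
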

	\begin{proof}
		By the uniqueness result of the homogeneous Dirichlet problem (see, for example \cite{Riva2021} or \Cref{thm:Uniqueness}), the function $\mathcal{S}^{0,0}_{D}[\varphi] $ is a constant. Then by \eqref{eqn:JumpSingle} we obtain 
		\[ \varphi(y) = n_y\cdot \mathcal{S}^{0,0}_{D}[\varphi]|_{+} - n_y\cdot \mathcal{S}^{0,0}_{D}[\varphi] |_{-}=0.\]
	\end{proof}
	In the asymptotic expansion formula \eqref{eqn:AsymExpZero}, it is clear that the operator $ \hat{S}_D^{\omega^2\alpha_0,\omega} $ has a $ \mathcal{O}(\omega^{-2}) $ singularity near zero. However, the inverse operator $ \left(\hat{S}^{\omega^2\alpha_0,\omega}_{D}\right)^{-1} $ behaves well near the zero point.
	\begin{lemma}\label{lem:holoOp}
		Given $ \alpha_0\in Y^{\prime} $ such that $ |\alpha_0|<1 $, the operator-valued function $ (\hat{S}^{\omega^2\alpha_0,\omega}_{D})^{-1} $ is holomorphic with respect to $ \omega $ in a neighborhood of $ \omega=0 $. 
	\end{lemma}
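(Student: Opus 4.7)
The plan is to isolate the $\omega^{-2}$ pole of $\hat{\mathcal{S}}^{\omega^2\alpha_0,\omega}_D$ inside a one-dimensional subspace and remove it by a coordinate rescaling, after which the inverse becomes a standard holomorphic perturbation problem. The key structural observation from \eqref{eqn:SingleLayerHatOperator} is that
\begin{equation*}
\hat{\mathcal{S}}^{\omega^2\alpha_0,\omega}_D = \mathcal{L}_0(\alpha_0) + \frac{1}{\omega^2}\,\mathcal{P},
\end{equation*}
where $\mathcal{L}_0(\alpha_0)$ is independent of $\omega$ and $\mathcal{P}[\varphi]=\frac{1}{|Y|}\chi_{_{\partial D}}\int_{\partial D}\varphi\,d\sigma$ is the rank-one operator whose range is $\mathbb{C}\chi_{_{\partial D}}$ and whose kernel is $\mathbb{L}^2_0(\partial D)$. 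Thus the singular part lives entirely in the one-dimensional image of $\mathcal{P}$.

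Next I would decompose $\mathbb{L}^2(\partial D)=\mathbb{L}^2_0(\partial D)\oplus\mathbb{C}\chi_{_{\partial D}}$, write $\varphi=\varphi_0+c\chi_{_{\partial D}}$, and make the change of variables $c=\omega^2\tilde c$. A direct substitution yields
\begin{equation*}
\hat{\mathcal{S}}^{\omega^2\alpha_0,\omega}_D\bigl[\varphi_0+\omega^2\tilde c\,\chi_{_{\partial D}}\bigr] = \mathcal{L}_0(\alpha_0)[\varphi_0] + \frac{|\partial D|}{|Y|}\,\tilde c\,\chi_{_{\partial D}} + \omega^2\,\tilde c\,\mathcal{L}_0(\alpha_0)[\chi_{_{\partial D}}],
\end{equation*}
which is manifestly holomorphic in $\omega$ at $\omega=0$ as a bounded operator $T_\omega:\mathbb{L}^2_0(\partial D)\oplus\mathbb{C}\to\mathbb{H}^1(\partial D)$.

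The core analytic task is then to show that the limit operator $T_0:(\varphi_0,\tilde c)\mapsto\mathcal{L}_0(\alpha_0)[\varphi_0]+\frac{|\partial D|}{|Y|}\tilde c\,\chi_{_{\partial D}}$ is invertible. For injectivity, note that on $\mathbb{L}^2_0(\partial D)$ the operator $\mathcal{L}_0(\alpha_0)$ equals $\mathcal{S}^{0,0}_{D}$ plus an $x$-independent constant depending on $\alpha_0$ and on the moment $\int_{\partial D}y\,\varphi_0\,d\sigma$, so $T_0(\varphi_0,\tilde c)=0$ forces $\mathcal{S}^{0,0}_{D}[\varphi_0]$ to be a constant multiple of $\chi_{_{\partial D}}$; \Cref{lem:uniqueness} then yields $\varphi_0=0$, which in turn forces $\tilde c=0$. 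For surjectivity I would appeal to the Fredholm alternative: $\mathcal{S}^{0,0}_{D}$ is Fredholm of index zero from $\mathbb{L}^2(\partial D)$ into $\mathbb{H}^1(\partial D)$, and all the additional terms appearing in $T_0$ are of finite rank (hence compact), so $T_0$ is Fredholm of index zero and injectivity forces bijectivity. The constraint $|\alpha_0|<1$ enters precisely to control the size of the $\alpha_0$-dependent perturbation terms uniformly.

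Once $T_0$ is invertible, standard holomorphic Fredholm theory applied to the holomorphic family $\omega\mapsto T_\omega$ produces $T_\omega^{-1}$ holomorphic in a neighborhood of $\omega=0$. Reversing the rescaling gives $\varphi(\omega,f)=\varphi_0(\omega,f)+\omega^2\tilde c(\omega,f)\chi_{_{\partial D}}$, which is still holomorphic in $\omega$, so $\bigl(\hat{\mathcal{S}}^{\omega^2\alpha_0,\omega}_D\bigr)^{-1}$ is holomorphic near $\omega=0$. The main obstacle, as I see it, is the rigorous verification of Fredholm surjectivity of $T_0$: this requires both the compactness of the $\alpha_0$-dependent correction terms and a careful use of the bound $|\alpha_0|<1$ to ensure that the perturbation from $\mathcal{S}^{0,0}_{D}$ does not destroy the index-zero Fredholm property.
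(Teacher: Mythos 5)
Your proposal is correct, but it proceeds along a genuinely different route from the paper. The paper argues by contradiction: it assumes $\bigl(\hat{\mathcal{S}}^{\omega^2\alpha_0,\omega}_{D}\bigr)^{-1}$ becomes singular as $\omega\to0$, takes a normalized family $\phi_\omega$ with $\hat{\mathcal{S}}^{\omega^2\alpha_0,\omega}_{D}[\phi_\omega]=\mathcal{O}(\omega)$, reads off from the $\omega^{-2}$ term that the leading part $\phi_0$ has zero mean, concludes $\mathcal{S}^{0,0}_{D}[\phi_0]=K\chi_{_{\partial D}}$, and invokes \Cref{lem:uniqueness} to force $\phi_0=0$, contradicting normalization; holomorphy is then inherited from the meromorphic Fredholm structure. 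You instead isolate the pole explicitly as the rank-one operator $\mathcal{P}$, split $\mathbb{L}^2(\partial D)=\mathbb{L}^2_0(\partial D)\oplus\mathbb{C}\chi_{_{\partial D}}$, rescale the constant component by $\omega^2$, and reduce the problem to inverting a holomorphic family $T_\omega$ whose limit $T_0$ is injective (again by \Cref{lem:uniqueness}, the shared key lemma) and Fredholm of index zero, since $T_0$ differs from $\mathcal{S}^{0,0}_{D}$ only by finite-rank terms — the same Fredholm fact the paper itself asserts later. Your Grushin-type reduction is more constructive: it makes the holomorphy of the inverse immediate, exhibits its structure (in particular it explains directly why the mean-free leading term $\phi_{j,0}\in\mathbb{L}^2_0(\partial D)$ appears in \Cref{proposi:asymptBehav}), and could be pushed to compute expansion coefficients, at the cost of being longer; the paper's argument is shorter but leaves the passage from non-singularity of the inverse to holomorphy more implicit. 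One small correction: for a fixed $\alpha_0$ the $\alpha_0$-dependent terms are finite rank regardless of their size, so the hypothesis $|\alpha_0|<1$ is not what preserves the index-zero Fredholm property; its real role is the validity and uniformity of the Green's function expansion \eqref{eqn:AsymExpZero} (and the uniform invertibility in $\alpha$ used later in \Cref{thm:EigValAsympt}).
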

	\begin{proof}
		From the definition \eqref{eqn:SingleLayerHatOperator}, we know that $ \hat{S}^{\omega^2\alpha_0,\omega}_{D} $ is a meromorphic \newline operator-valued function of $ \omega $ with a pole at $\omega=0$. 
		Now assuming $ \left(\hat{S}^{\omega^2\alpha_0,\omega}_{D}\right)^{-1} $ is singular as $ \omega \to 0 $, there exist some $\phi_{\omega}$ depending on $\omega$ such that $ ||  \phi_{\omega}||_{\mathbb{L}^2(\partial D)} =1$ while $ || \hat{S}^{\omega^2\alpha_0,\omega}_{D}[\phi_{\omega}] ||_{\mathbb{H}^1(\partial D)} =\mathcal{O}(\omega)$. We suppose the $\mathcal{O}(1)$ part of $ \phi_{\omega} $ is $ \phi_0 $, which is independent on $\omega$. Then from \eqref{eqn:SingleLayerHatOperator}, we have uniformly in $ \omega $, 
		\[
		\hat{S}^{\omega^2\alpha_0,\omega}_D[\phi_{\omega}](x) = \frac{1}{\omega^2|Y|}\int_{\partial D}\phi_{\omega}d\sigma(y)+\mathcal{O}(1).
		\] 
		When $ |\omega|\ll 1 $, we obtain 
		\[\int_{\partial D} \phi_0\,d\sigma(y) = 0. \]
		Thus we have, $ \hat{S}^{\omega^2\alpha_0,\omega}_D[\phi_0] =K\chi_{_{\partial D}}$ for some constant $ K $. It follows from \Cref{lem:uniqueness} that $\phi_0=0$, which contradicts the fact that $ ||\phi_{\omega}||_{\mathbb{L}^2(\partial D)} = \mathcal{O}(1) $.
	\end{proof}
	Now for fixed $\alpha_0\in Y^{\prime}$, $ \mathbb{H}^1(\partial D)\ni f_R \sim \mathcal{O}(1) $ and consider the function
	\begin{equation}\label{eqn:defini_capacitance}
		\hat{\phi}_{j}^{\omega^2\alpha_0,\omega} = \left(\hat{S}^{\omega^2\alpha_0,\omega}_{D}\right)^{-1}[\chi_{_{\partial D_j}}+\omega^2f_R]. 
	\end{equation}
	From \Cref{lem:holoOp}, we have the following expansion 
	\begin{equation}\label{eqn:expansion}
		\textstyle\hat{\phi}_{j}^{\omega^2\alpha_0,\omega} = \phi^{\alpha_0}_{j,0} +\omega\hat{\phi}^{\alpha_0}_{j,1}+\omega^2\hat{\phi}^{\alpha_0}_{j,R},
	\end{equation}
	as $\omega\to 0$ for some $ \phi^{\alpha_0}_{j,0},\hat{\phi}^{\alpha_0}_{j,1}, \hat{\phi}_{j,R}^{\alpha_0}\in \mathbb{L}^2(\partial D)  $. Here $ \hat{\phi}_{j,R}^{\alpha_0}\sim \mathcal{O}(1) $ and may depend on $ \omega $. We first state and prove the main properties of $ \phi_{j,0}^{\alpha_0},\hat{\phi}_{j,1}^{\alpha_0},\hat{\phi}_{j,R}^{\alpha_0} $ determined by \eqref{eqn:defini_capacitance} and \eqref{eqn:expansion}. 
	\begin{proposi}\label{proposi:asymptBehav}
		For every fixed $\alpha_0\in Y^{\prime}$ and $ f_r\in \mathbb{H}^1(\partial D) $, we have the following properties:
		\begin{itemize}
			\item[(i)] The zeroth order term $\phi^{\alpha_0}_{j,0}\in \mathbb{L}^2_{0}(\partial D)$ is independent of $\alpha_0$ and $ f_r $ and satisfies 
			\[ \mathcal{S}^{0,0}_{D}[\phi_{j,0}^{\alpha_0}] = \chi_{_{\partial D_j}}-\frac{1}{6}\chi_{_{\partial D}}; \]
			\item[(ii)] The first order term $\hat{\phi}^{\alpha}_{j,1}$ vanishes, which is equivalent to $\hat{\phi}^{\alpha}_{j,1} \equiv 0$;
			\item[(iii)] The following identity holds:
			\begin{equation}\label{eqn:second_order_term}
				\int_{\partial D}\hat{\phi}^{\alpha_0}_{j,R}d\sigma(y) - \iu\int_{\partial D}(\alpha_0\cdot y)\phi_{j,0}^{\alpha_0}\,d\sigma(y) = \frac{|Y|}{6}.
			\end{equation}
		\end{itemize}
	\end{proposi}
	\begin{proof}
		Since $\hat{S}^{\omega^2\alpha_0,\omega}_{D}[\hat{\phi}_{j}^{\omega^2\alpha_0,\omega}]$ is bounded as $\omega\to 0$, the singular part of $ \hat{S}^{\omega^2\alpha_0,\omega}_{D} $ should vanish on $\phi_{j,0}^{\alpha_0}$ and $\hat{\phi}^{\alpha_0}_{j,1}$ for any $ \alpha_0\in Y^{\prime} $. Therefore, we conclude that \newline $ \phi^{\alpha_0}_{j,0},\hat{\phi}^{\alpha_0}_{j,1}\in \mathbb{L}^2_0(\partial D) $.
		From the definition \eqref{eqn:defini_capacitance}, we have $ \hat{S}^{\omega^2\alpha_0,\omega}_{D}[\hat{\phi}_j^{\omega^2\alpha_0,\omega}] = \chi_{_{\partial D_j}} +\omega^2f_R$. Taking the $\mathcal{O}(1)$ terms gives us the following identity:
		\begin{equation}
			\chi_{_{\partial D_j}} = \mathcal{S}^{0,0}_{D}[\phi^{\alpha_0}_{j,0}] -\frac{\iu}{|Y|}\int_{\partial D}(\alpha_0\cdot y)\phi^{\alpha_0}_{j,0}d\sigma(y) + \frac{1}{|Y|}\int_{\partial D} \hat{\phi}^{\alpha_0}_{j,R}\,d\sigma(y)\label{eqn:zero_order_expansion}.
		\end{equation}
		And thus $\mathcal{S}^{0,0}_{D}[\phi^{\alpha_0}_{j,0}] = \chi_{_{\partial D_j}}+\tilde{K}_j\chi_{_{\partial D}}$ for some constant $\tilde{K}_j$. By uniqueness result \Cref{thm:Uniqueness}, we conclude that $ \mathcal{S}^{0,0}_D[\phi_{j,0}^{\alpha_0}] $ equals to constant in each inclusion $ D_{k} $. As a result, we have from \eqref{eqn:JumpSingle}
		\[ -\frac{1}{2}\phi_{j,0}^{\alpha_0} + (\mathcal{K}^{0,0}_{D})^{\ast}[\phi_{j,0}^{\alpha_0}]=\left.n_y\cdot\nabla \mathcal{S}^{0,0}_D[\phi_{j,0}^{\alpha_0}]\right|_{-}=0. \]
		From \Cref{approposi:KernelKastOp}, we have $ \tilde{K}_j=-1/6 $ for all $\alpha_0\in Y^{\prime} $. So we have proved (i).\par 
		To prove (ii), we first substitute $ \hat{\phi}_{j}^{\omega^2\alpha_0,\omega}$ into \eqref{eqn:defini_capacitance} and take the $ \mathcal{O}(\omega) $ terms. 
		\begin{equation}
			\mathcal{S}^{0,0}_{D}[\hat{\phi}^{\alpha_0}_{j,1}] =  \frac{\iu}{|Y|}\int_{\partial D}(\alpha_0\cdot y)\hat{\phi}^{\alpha_0}_{j,1}d\sigma(y) = K\chi_{_{\partial D}},
		\end{equation} 
		for some constant $ K=K(\omega) $. By \Cref{lem:uniqueness}, the first order term in \eqref{eqn:expansion} equals to zero, which finishes the proof of (ii). \par 
		As for (iii), we immediately draw the conclusion from \eqref{eqn:zero_order_expansion} and (i).
	\end{proof}
	Now we can define 
	\begin{equation}\label{eqn:capacitanceMatDef}
		C_{jk}^{0} \triangleq -\int_{\partial D_k}\phi_{j,0}^{\alpha_0} \,d\sigma(y),
	\end{equation}
	And we call the matrix $ \mathbf{C}^{0}=(C^{0}_{jk})_{j,k=1}^6 $ the \emph{periodic capacitance matrix}. From \Cref{proposi:asymptBehav} we know that the periodic capacitance matrix is independent of $ \alpha_0,\omega $ and $ f_r $. We can also conclude that $ \phi_{j,0}^{\alpha} $ and $ \hat{\phi}^{\alpha}_{j,1} $ are independent of $ \omega $. In the rest of the paper, we omit the superscript in $ \phi_{j,0}^{\alpha_0} $ and write $ \phi_{j,0} $.
	\begin{remark}\label{rmk:RepresentationCapacity}
		One can prove by Green's identity that the elements of periodic capacitance matrix can be equivalently defined by 
		\begin{equation}
			C_{jk}^{0} = \int_{Y\backslash D} (\nabla \mathcal{S}^{0,0}[\phi_{j,0}] \cdot \nabla \mathcal{S}^{0,0}[\phi_{k,0}])dx, \quad j,k = 1,2,\ldots 6.
		\end{equation}
	\end{remark}
	
	\subsection{Asymptotic Behavior of Band Structure}
	Given the results before, we will now prove \Cref{thm:EigValAsympt} concerning with the asymptotic behavior of subwavelength frequencies. Before giving the detailed proof, we first give several lemmas.
	\begin{lemma}\label{lem:FOperator}
		The operator $F^{\alpha_0}:\mathbb{L}^{2}(\partial D) \to \mathbb{L}^{2}(\partial D)$ defined by 
		\begin{equation}\label{eqn:FOperatorDef}
			F^{\alpha_0}[\phi](y) \triangleq -\frac{\phi}{2}(y)+(\mathcal{K}^{0,0}_D)^\ast[\phi](y) + \frac{\iu}{|Y|}\alpha_0\cdot n_y\int_{\partial D}\phi(z) d\sigma(z),\quad\forall \alpha_0 \in Y^{\prime},
		\end{equation}
		is a Fredholm operator of index zero, with $\ker(F^{\alpha_0}) = \ker(-\frac{1}{2}\operatorname{Id} + (\mathcal{K}^{0,0}_{D})^{\ast}) $.
	\end{lemma}
	\begin{proof}
		The operator $ F^{\alpha_0} $ is a rank-1 perturbation of the Fredholm operator $ \ker(-\frac{1}{2}\operatorname{Id} + (\mathcal{K}^{0,0}_{D})^{\ast}) $, so it is a Fredholm operator of index zero. \par 
		Noticing a simple fact, 
		\begin{equation}\label{eqn:NormalBdry}
			\int_{ \partial D_k}(\alpha_0\cdot n_y)d\sigma(y) = \int_{D_k}\operatorname{div}(\alpha_0)dx=0,
		\end{equation}
		and integrating both sides of \eqref{eqn:FOperatorDef} on $ \partial D $, we have, for any given $\phi\in \ker(F^{\alpha_0})$, 
		\[
		\int_{\partial D}\left(-\frac{1}{2}\phi+(\mathcal{K}^{0,0}_D[\phi])^\ast\right)d\sigma(y) = 0.
		\]
		By the mapping property given in Proposition 12.15 in \cite{Riva2021}, we have $\phi\in \mathbb{L}^2_0(\partial D)$. Therefore, $\phi$ satisfies 
		\[ -\frac{\phi}{2}+(\mathcal{K}^{0,0}_D)^\ast[\phi]=0, \]
		which is equivalent to say $\phi\in\ker(-\frac{1}{2}\operatorname{Id} + (\mathcal{K}^{0,0}_{D})^{\ast})$. \par 
		And for every $ \phi\in\ker(-\frac{1}{2}\operatorname{Id} + (\mathcal{K}^{0,0}_{D})^{\ast}) $, we have, by Proposition 12.15 in \cite{Riva2021}, $ \phi\in \mathbb{L}^2_0(\partial D) $. Substituting this fact into \eqref{eqn:FOperatorDef}, we know that $ \phi \in \ker(F^{\alpha_0}) $.
	\end{proof}
	\begin{lemma}
		The operator $ \mathcal{K}^{\alpha_0,0}_{D,2} $ defined by 
		\[
		\mathcal{K}^{\alpha_0,0}_{D,2}[\phi] \triangleq \int_{\partial D}n_x\cdot \nabla G^{\alpha_0}_2(x-y)\phi(y)d\sigma(y),
		\]
		satisfies
		\begin{equation}\label{eqn:SecondOrderInteg}
			\int_{\partial D_k} \mathcal{K}^{\alpha_0,0}_{D,2}[\phi] d\sigma(y) =-\int_{D_k} \mathcal{S}^{0,0}_{D}[\phi]dx + \iu\frac{|D_k|}{|Y|}\int_{\partial D}(\alpha_0\cdot y)\phi(y)d\sigma(y) ,
		\end{equation}
		for all $ \phi\in \mathbb{L}_0^2(\partial D) $ and $ k = 1,2,\ldots,6 $.
	\end{lemma}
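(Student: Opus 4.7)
The plan is to extract the PDE governing $G^{\alpha_0}_2$ from the defining Fourier-series identity for the quasi-periodic Green's function and then convert the surface integral on $\partial D_k$ into a volume integral by the divergence theorem. Starting from $(\Delta_x+\omega^2)G^{\omega^2\alpha_0,\omega}(x)=\delta_0(x)$ on the unit cell and substituting the expansion \eqref{eqn:AsymExpZero} (using $\Delta G^{0,0}=\delta_0-1/|Y|$), the $\mathcal{O}(\omega^2)$ balance of both sides gives
\[
\Delta G^{\alpha_0}_2(x) \;=\; -G^{0,0}(x)\;-\;\frac{\iu\,\alpha_0\cdot x}{|Y|}\;-\;\frac{|\alpha_0|^2}{|Y|}.
\]
Since $G^{0,0}$ has only a logarithmic singularity, this identity lifts $G^{\alpha_0}_2$ into $W^{2,p}_{\mathrm{loc}}$ for every $p<\infty$; in particular $G^{\alpha_0}_2$ is $C^1$, a regularity upgrade that is crucial below.

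Next, I would swap the order of integration in $\int_{\partial D_k}\mathcal{K}^{\alpha_0,0}_{D,2}[\phi]\,d\sigma(x)$ by Fubini to obtain
\[
\int_{\partial D}\phi(y)\!\left(\int_{\partial D_k}n_x\cdot\nabla_x G^{\alpha_0}_2(x-y)\,d\sigma(x)\right)d\sigma(y),
\]
and apply the divergence theorem to the inner surface integral. The $C^1$-regularity of $G^{\alpha_0}_2$ means no principal-value extraction is needed even when $y\in\partial D_k$, so the inner integral equals $\int_{D_k}\Delta_x G^{\alpha_0}_2(x-y)\,dx$, which by the PDE above splits as
\[
-\int_{D_k}G^{0,0}(x-y)\,dx\;-\;\frac{\iu}{|Y|}\int_{D_k}\alpha_0\cdot(x-y)\,dx\;-\;\frac{|\alpha_0|^2\,|D_k|}{|Y|}.
\]

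Reinserting this into the outer integral against $\phi$, the first piece reassembles into $-\int_{D_k}\mathcal{S}^{0,0}_D[\phi]\,dx$. The assumption $\phi\in\mathbb{L}^2_0(\partial D)$ then kills two contributions: the constant term $-|\alpha_0|^2|D_k|/|Y|$ and the $\alpha_0\cdot x$ half of the linear term (both multiply $\int_{\partial D}\phi\,d\sigma=0$). What remains is $\frac{\iu|D_k|}{|Y|}\int_{\partial D}(\alpha_0\cdot y)\phi(y)\,d\sigma(y)$, giving exactly \eqref{eqn:SecondOrderInteg}. The one genuine subtlety is justifying the divergence theorem when $y\in\partial D_k$; the regularity gained from the PDE for $G^{\alpha_0}_2$ is precisely what makes the manipulation rigorous without boundary corrections that would arise if one tried the same computation with $G^{0,0}$ directly.
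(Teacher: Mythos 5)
Your proof is correct, but it is organized differently from the paper's. The paper never writes a PDE for $G^{\alpha_0}_2$: it integrates the interior jump relation \eqref{eqn:JumpSingle} over $\partial D_k$, uses that the single layer potential solves the Helmholtz equation inside $D_k$ (so the integrated normal derivative equals $-\omega^2\int_{D_k}\mathcal{S}^{\omega^2\alpha_0,\omega}_D[\phi]\,dx$ up to $\mathcal{O}(\omega^4)$), replaces $\mathcal{S}$ by $\hat{\mathcal{S}}$ from \eqref{eqn:SingleLayerHatOperator}, removes the $\alpha_0\cdot n_y$ contribution via \eqref{eqn:NormalBdry}, and then matches the $\mathcal{O}(\omega^2)$ coefficients of the two resulting expansions. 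You instead do the order matching once, at the level of the Green's function, obtaining $\Delta G^{\alpha_0}_2 = -G^{0,0}-\iu(\alpha_0\cdot x)/|Y|-|\alpha_0|^2/|Y|$ near $\overline{D}$ (correct: the extra singular contributions $\iu(\alpha_0\cdot n)\delta(\cdot-n)$ sit at nonzero lattice points, which $\overline{D}-\overline{D}$ avoids away from the diagonal), and then conclude by Fubini, the divergence theorem on $D_k$, and $\int_{\partial D}\phi\,d\sigma=0$, which kills exactly the $y$-independent terms and leaves the two terms of \eqref{eqn:SecondOrderInteg}. What your route buys: it bypasses the jump relations, the operator $F^{\alpha_0}$ and \eqref{eqn:NormalBdry}, and it makes explicit why no principal value or boundary correction is needed; your regularity upgrade $G^{\alpha_0}_2\in W^{2,p}_{\mathrm{loc}}$, hence $C^1$ near the diagonal, is genuine (the logarithmic gradient singularities of $\iu(\alpha_0\cdot z)G^{0,0}(z)$ and of the remaining Fourier tail cancel). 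What the paper's route buys: it only manipulates objects already controlled earlier ($\hat{\mathcal{S}}$, the jump relations), so no separate discussion of $G^{\alpha_0}_2$ is required. The one step you should phrase more carefully is applying $\Delta+\omega^2$ to the expansion \eqref{eqn:AsymExpZero} and matching powers of $\omega$: term-by-term differentiation of an asymptotic expansion needs justification, e.g.\ by reading off the $\omega^2$ coefficient directly from the spectral series \eqref{eqn:QuasiGreensFunc}; since the paper itself performs the same kind of coefficient matching, this is a presentational point rather than a gap.
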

	\begin{proof}
		Note that from \eqref{eqn:JumpSingle} and \eqref{eqn:SingleLayerHatOperator}, we have, for $ \phi\in \mathbb{L}^2_{0}(\partial D) $
		\begin{subequations}
			\begin{equation*}
				\int_{ \partial D_k} -\frac{1}{2}\phi + (\mathcal{K}^{-\omega^2\alpha_0,\omega}_D)^{\ast}[\phi] d\sigma(y) = -\omega^2\int_{D_k} \hat{S}^{\omega^2\alpha_0,\omega}_{D}[\phi]dx   + \mathcal{O}(\omega^4).
			\end{equation*}
		\end{subequations}
		At the same time, we are led to 
		\begin{equation}
			\int_{ \partial D_k}\bigg( -\frac{1}{2}\phi + (\mathcal{K}_D^{-\omega^2\alpha_0,\omega})^{\ast}[\phi] \bigg)d\sigma(y) = \int_{\partial D_k} F^{\alpha_0}[\phi](y)d\sigma(y).
		\end{equation}
		Collecting the above discussions and \eqref{eqn:NormalBdry}, we obtain, for all $ |\omega|\ll 1 $,
		\begin{gather*}
			\begin{aligned}
				\omega^2\int_{ \partial D_k}\mathcal{K}^{\alpha_0,0}_{D,2}[\phi]d\sigma(y)&= -\omega^2\int_{D_k} \hat{S}^{\omega^2\alpha_0,\omega}_{D}[\phi]dx  + \mathcal{O}(\omega^4)\\
				&=-\omega^2\bigg\{\int_{D_k}\mathcal{S}^{0,0}_{D}[\phi]dx -\frac{\iu}{|Y|}\int_{D_k} \bigg[\int_{\partial D} (\alpha_0\cdot y)\phi(y)d\sigma(y)\bigg]dx\bigg\}.
			\end{aligned}
		\end{gather*}
		The $ \mathcal{O}(\omega^2) $ terms are independent of $ \omega $, and they must coincide.
	\end{proof}
	Further, one notes the continuous dependency on $ \delta $ of the original problem. 
	\begin{proposi}
		There are six Bloch resonant frequencies $ \{\omega_{n}(0,\sigma,\delta)\}_{n=1}^6 $ such that $ \{\omega_{n}(0,\sigma,0) = 0\}_{n=1}^6 $ and $ \{\omega_{n}(0,\sigma,\delta)\}_{n=1}^6 $ depends on $ \delta $ continuously. 
	\end{proposi}
	\begin{proof}
		When $ \alpha\neq 0 $, we can adopt the standard procedure to prove that there exist six Bloch resonant frequency $ \{\omega_{n}(\alpha,\sigma,\delta) \}_{n=1}^{6} $ such that $ \{\omega_{n}(\alpha,\sigma,0) = 0 \}_{n=1}^{6}$. See, for example, \cite{Ammari2020}. By the Lipschitz continuity of the band function with respect of $ \alpha $, we claim that $ \{\omega_{n}(0,\sigma,0) = 0\}_{n=1}^{6} $ and there exist at most six Bloch resonant frequencies when $\alpha=0$. \par 
		First, it is clear that $ \omega_1(0,\sigma,0) = 0 $ is an eigenvalue when $\alpha=0$ for all $\delta\ge0$. So we consider the eigenvalues that correspond to the non-constant eigenfunctions. When $\delta=0$, we investigate the operator $ \mathcal{A}^{0,\omega}_{0} $ acting on $ (\phi,\psi)^{T} $ and decompose it into two parts:
		\begin{gather}\label{eqn:OperatorDecompose}
			\begin{aligned}
				\mathcal{A}^{0 ,\omega}_{0}\begin{pmatrix}
					\phi\\ \psi	
				\end{pmatrix}
				&= \begin{pmatrix}
					\mathcal{S}^{0,k_1}_D[\phi] - \frac{1}{k_1^2|Y|}\int_{\partial D} \phi d\sigma - \mathcal{S}^{0,k_0}_D[\psi]+\frac{1}{k_0^2|Y|}\int_{\partial D}\psi d\sigma\\
					-\frac{1}{2}\phi +(\mathcal{K}^{0,k_1}_D)^{\ast}[\phi] 
				\end{pmatrix} \\&\quad + 
				\begin{pmatrix}
					\frac{1}{k_1^2|Y|}\int_{\partial D} \phi d\sigma -\frac{1}{k_0^2|Y|}\int_{\partial D}\psi d\sigma\\
					0
				\end{pmatrix}.
			\end{aligned}
		\end{gather}
		The first part is a holomorphic operator-valued function in a neighborhood of $ \omega = 0 $, while the second part is finitely meromorphic\cite{Ammari2018}. \par 
		From \eqref{eqn:OperatorDecompose}, the point $ \omega = 0 $ is a characteristic value of $ \mathcal{A}^{0,\omega}_{0}  $. To see this we notice that 
		\[ \left\{\Phi_j = \begin{pmatrix}
			\varphi_j\\
			\varphi_j
		\end{pmatrix}\right\}_{j=1}^{5}, \]
		are the root functions of $ \mathcal{A}^{0,\omega}_{0} $ associated to $\omega=0$. Here $ \{\varphi_j\}_{j=1}^5 $ forms the basis of $\ker(-\frac{1}{2}\operatorname{Id} + (\mathcal{K}^{0,0}_{D})^{\ast})\subset \mathbb{L}^{2}_{0}(\partial D) $. And by \eqref{eqn:AsymExpZero}, we have 
		\begin{equation}
			-\frac{\varphi_j}{2} + (\mathcal{K}^{0,k_1}_{D})^{\ast}[\varphi_j] = \omega^2 h(x,\omega),x\in \partial D,\quad j = 1,2,\ldots,5,
		\end{equation}
		for some function $ h $ that is holomorphic with respect to $ \omega $ in a neighborhood of $ \omega = 0 $. Since 
		\[\int_{\partial D_j}\mathcal{S}^{0,0}_D[\varphi_j]dx\neq 0,\quad j = 1,2,\ldots,5,\]
		the rank of each $ \{\Phi_j\}_{j=1}^{5} $ is two. Thus, the multiplicity of the characteristic value of $\omega=0$ is $ 10 $. \par 
		The Neumann-Poincar\'e operator $ (\mathcal{K}^{0,0}_D)^{\ast} $ is known to be a compact operator, so $ -\frac{1}{2}\operatorname{Id} + (\mathcal{K}^{0,0}_D)^{\ast} $ is Fredholm of index $ 0 $. Since $ \mathcal{S}^{0,0}_D $ is also Fredholm, we conclude that $ \mathcal{A}^{0,0}_0 $ is of Fredholm type. Since $ (\mathcal{K}^{0,\omega}_{D})^{\ast} $ is holomorphic and $ \mathcal{S}^{0,\omega}_D $ is finitely meromorphic as a function of $\omega$ is a neighborhood of $ 0 $, it follows that $ \mathcal{A}^{0,\omega}_0 $ is of Fredholm type. \par 
		Choosing a disk $V\subset \mathbb{C}$ around $0$, with small enough radius such that $\mathcal{A}^{0,\omega}_0$ is invertible on $\partial V$ and $\omega =0$ is the only characteristic value in $V$. From Lemma 1.11 in \cite{Ammari2018}, it follows that $\mathcal{A}^{0,\omega}_0$ is normal with respect to $\partial V$. \par  
		
		Now we consider the full operator $ \mathcal{A}^{0,\omega}_{\delta} $. It is obvious that 
		\begin{gather*}
			\begin{aligned}
				\mathcal{A}^{0,\omega}_{\delta} &= \mathcal{A}^{0,\omega}_{0} + \begin{pmatrix}
					0 & 0\\
					0 & -\delta\left( \frac{1}{2}\operatorname{Id} + (\mathcal{K}^{0,\omega}_{D})^{\ast} \right)
				\end{pmatrix}\\
				&\triangleq \mathcal{A}^{0,\omega}_0 + A^{(1)}(\omega,\delta),
			\end{aligned}
		\end{gather*}
		where $  A^{(1)}(\omega,\delta) $, is holomorphic with respect to $ \omega $ in $ V $ and continuous up to $ \partial V $. For small enough $ \delta $ we have 
		\[
		\left\Vert (\mathcal{A}^{0,\omega}_0)^{-1}A^{(1)}(\omega,\delta) \right\Vert_{\mathcal{B}((\mathbb{L}^{2}(\partial D)^2),(\mathbb{L}^{2}(\partial D)^2))}<1,\quad \omega \in \partial V. 
		\]
		Hence by the generalization of Rouch\'e's theorem, the multiplicity of the operator $\mathcal{A}^{0,\omega}_{\delta}$ inside $ V $ equals to 10, for sufficient small $ \delta $. Since the characteristic values are symmetric around the origin, it is clear that there exist 5 characteristic values with positive real part. Adding one due to the constant function, we conclude that there exits 6 Bloch resonant frequency. 
	\end{proof}
	Now we can give a complete proof to \Cref{thm:EigValAsympt}.
	\begin{proof}[Proof of \Cref{thm:EigValAsympt}]
		After rescaling the quasi-periodic vector $\alpha = k_1^2\alpha_0$, One look for normalized solutions $ (\phi^{\alpha},\psi^{\alpha})^{T} $ to the integral equation.
		\[ \mathcal{A}^{k_1^2\alpha_0,\omega}\begin{pmatrix}
			\phi^{\alpha}\\ \psi^{\alpha}
		\end{pmatrix} = \begin{pmatrix}
			\mathcal{S}^{k_1^2\alpha_0,k_1}_{D}[\phi^{\alpha}]    -\mathcal{S}^{k_0^2\tilde{\alpha_0},k_0}_{D}[\psi^{\alpha}]\\
			-\frac{1}{2}\phi^{\alpha} + \left(\mathcal{K}^{-k_1^2\alpha_0,k_1}_{D}\right)^{\ast}[\phi^{\alpha}]  -\delta\left[\frac{1}{2}\psi^{\alpha} + \left(\mathcal{K}^{-k_0^2\tilde{\alpha_0},k_0}_{D}\right)^{\ast}[\psi^{\alpha}]\right] \end{pmatrix} = 0, \]
		where $ \tilde{\alpha_0} = \frac{v_0^2}{v_1^2}\alpha_0  $.   
		By the asymptotic expansion, we are led to 
		\begin{align}
			\hat{S}^{k_1^2\alpha_0,k_1}_{D}[\phi^{\alpha}]- \hat{S}^{k_0^2\tilde{\alpha_0},k_0}_D[\psi^{\alpha}] = \mathcal{O}(\omega^2 ),\label{eqn:ContinuousValue}\\
			F^{\alpha_0}[\phi^{\alpha}] +k_1^2\mathcal{K}^{\alpha,0}_{D,2}[\phi^{\alpha}] -\delta(\psi^{\alpha}+ F^{\tilde{\alpha_0}}[\psi^{\alpha}])&= \mathcal{O}(\omega^4 + \omega^2\delta)\label{eqn:JumpDerivative}.
		\end{align}
		Since the operators $ \hat{S}^{k_1^2\alpha_0,k_1}_{D} $ and $ \hat{S}^{k_0^2\alpha_0,k_0}_D $ are invertible uniformly when \newline $ |\alpha|<\min(k_1^2,k_0^2) $ and $ \omega \neq 0 $, one has
		\[ \psi^{\alpha} = \phi^{\alpha} + \mathcal{O}(\omega^2), \quad |\alpha|<\min(k_1^2,k_0^2).\]
		Inserting the above expression into \eqref{eqn:JumpDerivative}, we obtain 
		\[ F^{\alpha_0}[\phi^{\alpha}] = \mathcal{O}(\omega^2 + \delta). \] 
		Thus from \Cref{lem:FOperator}, the solution $(\phi^{\alpha},\psi^{\alpha})$ to the integral equations can be rewritten as
		\begin{equation}\label{eqn:AsymExpansion}
			\phi^{\alpha} = \varphi_0 + \phi_{R}^{\alpha_0},\quad \psi^{\alpha} = \varphi_0 + \psi_R^{\alpha_0}.
		\end{equation}
		Here $\varphi_0\in \ker(-\frac{1}{2}\operatorname{Id} + (\mathcal{K}^{0,0}_{D})^{\ast}) $, while $ \phi^{\alpha}_R , \psi_{R}^{\alpha_0}$ lie in the orthogonal complement of \newline $ \ker(-\frac{1}{2}\operatorname{Id} + (\mathcal{K}^{0,0}_{D})^{\ast}) $ in $ \mathbb{L}^2(\partial D)$ and is of $ \mathcal{O}(\omega^2) $.  \par 
		The asymptotic expansion \eqref{eqn:AsymExpansion} yield
		\[ F^{\alpha_0}[\phi^{\alpha}_{R}] = \mathcal{O}(\omega^2+\delta).\]
		When restricted to the orthogonal complement of $ \ker(F^{\alpha_0}) $, the operator $ F^{\alpha_0} $ is invertible, then it follows that  $\phi^{\alpha}_{R}= \mathcal{O}(\omega^2+\delta) $ uniformly for $|\alpha| < \min(k_1^2,k^2)$.\par 
		Now from \eqref{eqn:ContinuousValue}, we have 
		\[  \mathcal{S}^{0,0}_{D}[\varphi_0] - \frac{\iu}{|Y|}\int_{\partial D}(\alpha_0\cdot y)\varphi_0d\sigma(y) + \frac{1}{k_1^2|Y|}\int_{\partial D}\phi_R^{\alpha_0} d\sigma(y)= a_j\chi_{_{\partial D_j}} + \mathcal{O}(\omega^2).\]
		Here and thereafter in the proof, the Einstein summation convention will be adopted, $ a_j = \sum_{j=1}^6 a_j $. From \Cref{proposi:asymptBehav}, we conclude that 
		\begin{equation}\label{eqn:FirstOrderSolution}
			\textstyle\phi^{\alpha} = a_j\phi_{j,0}+\phi_{R}^{\alpha_0}.
		\end{equation}
		Substituting \eqref{eqn:FirstOrderSolution} into \eqref{eqn:JumpDerivative} and integrating around $\partial D_k $ for $k=1,2,\ldots,6$, one has
		\[
		\int_{\partial D_k}\bigg\{k_1^2\mathcal{K}^{\alpha_0,0}_{D,2}[a_j\phi_{j,0}] + F^{\alpha_0}[\phi_r^{\alpha_0}] \bigg\}d\sigma(y)  + \delta  a_j\int_{ \partial D_k}\phi_{j,0}d\sigma(y) = \mathcal{O}(\omega^4+\delta\omega^2).
		\]
		Combining statement (i) of \Cref{proposi:asymptBehav} and \eqref{eqn:SecondOrderInteg}, we have
		\[
		k_1^2\int_{\partial D_k}\mathcal{K}^{\alpha_0,0}_{D,1}[a_j\phi_{j,0}] d\sigma(y) = -k_1^2|D_k|(a_k - \frac{1}{6}a_j) + \iu k_1^2\frac{|D_k|}{|Y|}\int(\alpha_0\cdot y)a_j\phi_{j,0}d\sigma(y).
		\]
		Then from the definition of the operator $ F^{\alpha_0} $, one notices that 
		\begin{equation*}
			\int_{\partial D_k}F^{\alpha_0}[\phi_R^{\alpha_0}] d\sigma(y)= \int_{ \partial D} \phi_R^{\alpha_0}(-\frac{1}{2}\operatorname{Id} + \mathcal{K}^{0,0}_D)[\chi_{_{\partial D_k}}]d\sigma(y).
		\end{equation*}
		Invoking \Cref{aplem:KernelKOp}, we are led to 
		\[
		(-\frac{1}{2}\operatorname{Id} + \mathcal{K}^{0,0}_D)[\chi_{_{\partial D_k}}-1/6\chi_{_{\partial D}}+1/6\chi_{_{\partial D}}] = (-\frac{1}{2}\operatorname{Id} + \mathcal{K}^{0,0}_D)[1/6\chi_{_{\partial D}}] = -\frac{|D|}{6|Y|}.
		\]
		And we have 
		\begin{equation}
			-\frac{\omega^2|D|}{6v_1^2}a_k + \delta C^{0,\sigma}_{jk}a_k= \mathcal{O}(\omega^4+\delta \omega^2),\quad j = 1,2,\ldots,6.
		\end{equation}
		Therefore, the value $\frac{\omega^2|D|}{6\delta v_1^2}$ approximate the eigenvalue of the periodic capacitance matrix $\mathbf{C}^{0}$. This complete the proof of \Cref{thm:EigValAsympt}. 
	\end{proof}
	
	\section{Structure of Periodic Capacitance Matrix}\label{sec:structure}
	Now that we have successfully associated the subwavelength frequencies with the eigenvalues of the periodic capacitance matrix, we will investigate its structure in this section. And we will prove the following result, which indicate the existence of local band gap for $ \sigma\neq 0 $:
	\begin{proposi}\label{proposi:capacitanceMatEig}
		The eigenvalues of the periodic capacitance matrix $\mathbf{C}^0$ satisfy 
		\[ 0=\lambda_1< \lambda_2=\lambda_3 \le \lambda_4 = \lambda_5<\lambda_6. \]
		Moreover, when $ \sigma = 0 $, $ \lambda_3=\lambda_4 $. When $ \sigma \neq 0 $, $ \lambda_3<\lambda_4 $.
	\end{proposi}
	Before we give the full proof, we first give a proposition describing its structure.
	\begin{proposi}
		The periodic capacitance matrix $ \mathbf{C}^{0} $ has the following structures 
		\begin{itemize}
			\item[(i)] For all $j,k = 1,2,\ldots, 6$, $ C^{0}_{jk}= C^{0}_{j-k} $ and $ C^{0}_{jk} = C^{0}_{kj} $.
			\item[(ii)] $ C^{0}_{13} = C^{0}_{15} $, $ C^{0}_{12} = C^{0}_{16} $. 
			\item[(iii)] For all $j,k = 1,2,\ldots, 6$, $j\neq k,$ 
			\begin{equation*}
				\sum_{j=1}^{6}C_{jk}^{0}=0,\quad C^{0}_{jj}>0,\quad C^{0}_{jk}<0.
			\end{equation*}
		\end{itemize}
	\end{proposi}
	\begin{proof}
		To prove ($ i $), for any $ C^{0}_{jk} $we calculate directly, 
		\[ \int_{\partial D_{k}}\phi_{j,0}(y)d\sigma(y)  = \int_{\partial D_{k}}\phi_{j+1,0}(Ry)d\sigma(y) = \int_{\partial D_{k+1}}\phi_{j+1,0}(y)d\sigma(y)=C^0_{j+1,k+1}.  \]
		And the equality $ C^{0}_{16} = C^{0}_{21}, C^{0}_{61} = C^{0}_{12}$ can be proved in a similar way. The symmetry can be proved by invoking \Cref{rmk:RepresentationCapacity}. \par 
		As for ($ ii $), the proof goes similarly as ($ i $). Take the equality $ C^{0}_{12} = C^{0}_{16} $ for example:
		\[ \int_{\partial D_{2}}\phi_{1,0}(y)d\sigma(y)  = \int_{\partial D_{2}}\phi_{2,0}(T_{y}y)d\sigma(y) = \int_{\partial D_{1}}\phi_{2,0}(y)d\sigma(y)=C^0_{21}=C^{0}_{16}.  \]\par  
		Finally we will prove ($ iii $). This follows from the facts that $\phi_{j,0}\in \mathbb{L}^2_0(\partial D)$ and the single layer potential $\mathcal{S}^{0,0}_{D}[\phi_{j,0}]$ is harmonic in $Y\backslash D$.  
		We have 
		\[ \sum_{j=1}^{6}C_{jk}^{0}=\int_{\partial D}\phi_{k,0}d\sigma(y) = 0. \]
		And by the maximum principle, the normal derivative of the single layer potential should be strictly negative on $\partial D_j$, and be strictly positive on $ \partial D\backslash \partial D_j $. 
	\end{proof}\par
	To fully describe the eigenvalues of the periodic capacitance matrix, we need to characterize each element quantitatively. In \cite{Ammari2019,Ammari2020b}, the authors are able to investigate the capacitance matrices for finite systems or a periodic chain in three-dimension quantitatively.
	\begin{lemma}\label{proposi:ElementChange}
		If $ \operatorname{dist}(P_i+\Lambda,P_j) > \operatorname{dist}(P_i+\Lambda,P_{k}) $ for $ i\neq j $ and $ i\neq k $, where $ P_i+\Lambda \triangleq \{ P_i+l:l\in \Lambda \} $. We have 
		\[ C^0_{ik}<C^{0}_{ij}. \]
	\end{lemma}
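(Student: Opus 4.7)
My plan is to recast $C^0_{ij}$ as a boundary flux for the periodic capacity potential and then derive the monotonicity from a comparison argument for the Dirichlet Green's function of $Y\setminus D$.

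First, I would introduce $v_i := \mathcal{S}^{0,0}_D[\phi_{i,0}] + \tfrac{1}{6}$. By \Cref{proposi:asymptBehav}(i), this is the unique $\Lambda$-periodic function that is harmonic in $\mathbb{R}^2\setminus D_{per}$, equals $1$ on $\partial D_i$, and equals $0$ on each $\partial D_l$ with $l\neq i$. The strong maximum principle gives $0 < v_i < 1$ in $Y\setminus D$, and Hopf's lemma gives $n\cdot \nabla v_i|_{+} > 0$ on every $\partial D_l$ with $l\neq i$. Since the proof of \Cref{proposi:asymptBehav}(i) shows that $\mathcal{S}^{0,0}_D[\phi_{i,0}]$ is constant inside each inclusion, the jump relation \eqref{eqn:JumpSingle} collapses to $\phi_{i,0}=n\cdot\nabla v_i|_{+}$, and hence
\[
C^0_{ij} \;=\; -\int_{\partial D_j} n_y\cdot\nabla v_i|_{+}\,d\sigma(y),\qquad j\neq i.
\]
The desired inequality $C^0_{ik} < C^0_{ij}$ is therefore equivalent to saying that strictly more flux of $v_i$ escapes through the nearer boundary $\partial D_k$ than through the farther $\partial D_j$, which is physically natural but must be made rigorous.

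To do so, I would introduce the Dirichlet Green's function $G_D(x,y)$ of $-\Delta$ on $Y\setminus D$ (vanishing on $\partial D$ and $\Lambda$-periodic in $x$), and use the Poisson representation
\[
v_i(x) \;=\; -\int_{\partial D_i} n_y\cdot\nabla_y G_D(x,y)\,d\sigma(y),\qquad x\in Y\setminus D.
\]
Differentiating normally at $x\in\partial D_l$ and integrating,
\[
\int_{\partial D_l} n_x\cdot\nabla v_i|_{+}\,d\sigma(x) \;=\; -\int_{\partial D_i}\!\int_{\partial D_l} (n_x\cdot\nabla_x)(n_y\cdot\nabla_y) G_D(x,y)\,d\sigma(x)\,d\sigma(y),
\]
and the double-normal kernel is strictly negative on $\partial D_i\times\partial D_l$ for $l\neq i$, so $|C^0_{il}|$ is given by a positive integral whose integrand is comparable, up to bounded corrections, to the corresponding derivatives of the free periodic kernel $G^{0,0}(x-y)$ from \eqref{eqn:PeriodicGreenFunc}. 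Modulo a smooth background, $G^{0,0}$ is a lattice sum of $-\tfrac{1}{2\pi}\log\lvert x-y-l\rvert$ that is strictly monotone-decreasing in the minimum periodic distance $\operatorname{dist}(x-y,\Lambda)$ on the relevant scales.

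Combining these ingredients, the distance hypothesis $\operatorname{dist}(P_i+\Lambda,P_j) > \operatorname{dist}(P_i+\Lambda,P_k)$ translates, via the diameter bound $\operatorname{diam}(D_0)<1/4$, into a uniform pointwise inequality between the two boundary kernels after pairing points on $\partial D_j$ with points on $\partial D_k$ through the appropriate rotation, using the equivariance of the $\phi_{i,0}$ already exploited in the proof of (i)--(ii) of the structure proposition. Integrating this pointwise inequality then yields the strict comparison. The main difficulty I anticipate is controlling the reflection corrections that $G_D$ carries relative to $G^{0,0}$ from the other four inclusions: these corrections must be shown subleading to the dominant logarithmic term. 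This is exactly where the separation assumption $\operatorname{diam}(D_0)<1/4$ becomes essential, as it guarantees the inclusions are well separated so that the near-field logarithmic behavior dominates and strict monotonicity passes from $G^{0,0}$ to $G_D$.
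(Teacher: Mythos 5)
You should first know that the paper does not actually prove this lemma: it is stated without proof, immediately after a pointer to quantitative capacitance computations in other geometries (\cite{Ammari2019,Ammari2020b}), so there is no argument of the authors to match your proposal against. Judged on its own merits, the first half of your proposal is sound: identifying $v_i=\mathcal{S}^{0,0}_D[\phi_{i,0}]+\tfrac16$ as the periodic capacity potential (equal to $1$ on $\partial D_i$, $0$ on the other boundaries, harmonic outside $D_{per}$), collapsing the jump relation to $\phi_{i,0}=n\cdot\nabla v_i|_{+}$, and writing $C^0_{ij}=-\int_{\partial D_j} n\cdot\nabla v_i|_{+}\,d\sigma$ is correct and is essentially how the paper obtains the sign information $C^0_{jk}<0$, $\sum_j C^0_{jk}=0$ in its structure proposition for $\mathbf{C}^0$.

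The decisive step, however, is missing. The claim that the double-normal kernel of the Dirichlet Green's function of $Y\setminus D$ is ``comparable, up to bounded corrections,'' to derivatives of $G^{0,0}$, and that strict distance-monotonicity passes from $G^{0,0}$ to $G_D$, is precisely the content of the lemma and is asserted rather than proved. Concretely: (a) there is no small parameter — $\operatorname{diam}(D_0)<1/4$ while the center spacing is $|P_1|\approx 1/3$ and the competing distances ($\approx 1/3$ versus $\approx \sqrt{3}/3$) are all $\mathcal{O}(1)$, so ``the near-field logarithm dominates'' has no asymptotic justification and the reflection corrections from the other four inclusions are themselves $\mathcal{O}(1)$ at these distances; (b) $G^{0,0}$ is not a radial function of the periodic distance (the lattice sum of logarithms must be regularized and is direction-dependent), so ``strictly monotone-decreasing in the minimum periodic distance'' is not a pointwise property you can invoke at order-one separations, and moreover the fluxes $C^0_{ij}$, $C^0_{ik}$ aggregate contributions from \emph{all} periodic images of $D_j$ and $D_k$, not a single pairwise term; (c) the proposed pointwise pairing of $\partial D_j$ with $\partial D_k$ ``through the appropriate rotation'' is unavailable in exactly the cases the lemma is used for, e.g.\ $C^0_{12}$ versus $C^0_{13}$: the rotation carrying $D_2$ to $D_3$ moves $D_1$, and no symmetry of the structure fixes $D_1$ while exchanging $D_2$ and $D_3$, so the equivariance of $\phi_{1,0}$ yields no pointwise kernel comparison (the symmetries only give the identities $C^0_{12}=C^0_{16}$, $C^0_{13}=C^0_{15}$ already recorded in the paper). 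As written, your argument reproduces the qualitative sign structure but not the strict inequality between off-diagonal entries; closing the gap would require genuinely quantitative input (for instance a dilute-inclusion asymptotic regime, explicit multipole computations for discs, or a maximum-principle comparison with explicit barriers), which is presumably why the paper leaves this lemma unproved.
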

	From this fact we conclude that $ C^{0}_{12} <C^{0}_{13}, C^{0}_{14}<C^{0}_{13} $. Moreover, when $ \sigma=0 $, we can prove by symmetry that $ C^{0}_{12} = C^{0}_{14} $
	And we can prove \Cref{proposi:capacitanceMatEig} from the basic linear algebra. Together with \Cref{thm:EigValAsympt}, and the continuity of subwavelength frequency with respect to $ \alpha $,  we can conclude that there exist a local band gap near $ \alpha = 0 $ when $ \sigma \neq 0 $. 
	\begin{theorem}\label{thm:LocalBandGap}
		For contracted or dilated generalized honeycomb-structured inclusions, there exists an open set $ V\subset Y' $ containing $ \alpha = 0 $, such that for all $ \alpha\in V $,
		\[ \omega_{3}^2(\alpha,\sigma,\delta)<\omega_{4}^2(\alpha,\sigma,\delta). \]
	\end{theorem}\par 
	We can also easily derive the corresponding eigenvectors.
	\begin{proposi}\label{proposi:EigFuncPer}
		Let $ \{v_j\}_{j=1}^6 $ be the six eigenvectors of the periodic capacitance matrix $ \mathbf{C}^0 $. When $\sigma<0$, the second to fifth eigenvectors of the periodic capacitance matrix are given by 
		\begin{gather*}
			\begin{aligned}
				v_2 = (1,1,0,-1,-1,0)^{T},&\quad v_3 = (1,2,1,-1,-2,-1),\\
				v_4 = (1,-1,0,1,-1,0)^{T},&\quad v_5 = (1, -2, 1, 1 , -2,1 ).
			\end{aligned}
		\end{gather*}
		When $\sigma>0$, the second to fifth eigenvectors of the periodic capacitance matrix are given by 
		\begin{gather*}
			\begin{aligned}
				v_2 = (1,-1,0,1,-1,0)^{T},&\quad v_3 = (1, -2, 1, 1 , -2,1 ),\\
				v_4 = (1,1,0,-1,-1,0)^{T},&\quad v_5 = (1,2,1,-1,-2,-1).
			\end{aligned}
		\end{gather*}
	\end{proposi}
	From \Cref{thm:EigValAsympt}, the eigenfunctions of \eqref{eqn:ProblemFormulation} can be asymptotically determined up to $ \mathcal{O}(\delta) $ 
	\[ \textstyle w_l \sim \sum_{j=1}^6\mathcal{S}^{0,0}_{D}[\phi_{j,0}](v_l)_{j}, \quad l = 2,3,4,5. \]
	Here recall that $ \phi_{j,0} $ are determine by \eqref{eqn:expansion}. The $j$-th component of vector $v_l$ is denoted by $(v_l)_j$. 
	\begin{remark}
		We comment here that the different symmetries of eigenfunctions of \eqref{eqn:ProblemFormulation} may carry topological information. It also shed light on the existence of topological states on the edge. This will be left to future works.
	\end{remark}
	\section{Band Structure of Super Honeycomb-structured Materials}\label{sec:ConeStructure}
	In this section, we prove the additional symmetry of super honeycomb-structured inclusions when $ \sigma = 0 $. Following this, we will establish the band folding result \Cref{thm:Decomposition} to justify the existence of double Dirac point structure near $ \alpha=0 $. 
	\subsection{Double Dirac Cone Structure}
	When $\sigma = 0$, we can prove easily that the inclusions $ D_{per} $ has the following additional invariance.
	\begin{proposi}\label{proposi:subPeriodic}
		The super honeycomb-structured inclusions $ D_{per} $ is invariant under the translation $ \tilde{l}_1 $ and $ \tilde{l}_2 $.   
	\end{proposi}
	\par 
	Thus we have shown that for super honeycomb-structured inclusions, we can choose a smaller unit cell $ \tilde{Y} $ by \Cref{proposi:subPeriodic}, as plotted in \Cref{fig:Lattice}. It is enclosed in blue dashed lines and has two distinct inclusions $ D_1 $ and $ D_2 $, and we let $ \tilde{D} \triangleq D_1\cup D_2 $. Similarly we can define the lattice points $ \tilde{\Lambda} \triangleq \{ m_1\tilde{l}_1+m_2\tilde{l}_2:m_1,m_2\in \mathbb{Z} \} $ and the corresponding dual lattice points $ \tilde{\Lambda}^{\prime} \triangleq \{ m_1\tilde{k}_1+m_2\tilde{k}_2:m_1,m_2\in \mathbb{Z} \} $, where 
	\begin{equation}\label{eqn:BigDualBasis}
		\tilde{k}_1= 2k_1-k_2 ,\quad  \tilde{k}_2= 2k_2-k_1.
	\end{equation}
	As stated in \Cref{proposi:subPeriodic}, we can therefore define the Green's functions $ \tilde{G}^{\alpha,\omega} $ and $ \tilde{G}^{0,0} $ as defined in \eqref{eqn:QuasiGreensFunc} and \eqref{eqn:PeriodicGreenFunc}. 
	\[ \tilde{G}^{\alpha ,\omega} = \frac{3}{|Y|}\sum_{q\in \tilde{\Lambda}'}\frac{\eu^{\iu(\alpha+q)\cdot x}}{\omega^2-|\alpha+q|^2},\quad \tilde{G}^{0 ,0} = -\frac{3}{|Y|}\sum_{q\in \tilde{\Lambda}'\backslash\{0\}}\frac{\eu^{\iu q\cdot x}}{|q|^2}.  \]
	Therefore, single layer potentials can be defined similarly. And further we can prove the band folding theorem: 
	\begin{theorem}\label{thm:Decomposition}
		For sufficient small $ |\omega|>|\alpha|\ge 0 $, $|\varepsilon|\ge0$, or $ \omega = |\alpha| = |\varepsilon| $, the following identities holds for arbitrary $\phi\in \mathbb{L}^2(\partial D)$
		\begin{align}
			\mathcal{S}_D^{\varepsilon,\omega}[\phi] &= \tilde{\mathcal{S}}^{\alpha_1+\varepsilon,\omega}_{\tilde{D}}[\tilde{\phi}^{\varepsilon}_1]+ \tilde{\mathcal{S}}^{\alpha_2+\varepsilon,\omega}_{\tilde{D}}[\tilde{\phi}^{\varepsilon}_1] + \tilde{\mathcal{S}}^{\alpha_{3}+\varepsilon,\omega}_{\tilde{D}}[\tilde{\phi}^{\varepsilon}_3],\label{eqn:SingleLayerDecomp}\\
			(\mathcal{K}_D^{-\varepsilon,\omega})^{\ast}[\phi] &= (\tilde{\mathcal{K}}^{-\alpha_1-\varepsilon,\omega}_{\tilde{D}})^{\ast}[\tilde{\phi}^{\varepsilon}_1]+ (\tilde{\mathcal{K}}^{-\alpha_2-\varepsilon,\omega}_{\tilde{D}})^{\ast}[\tilde{\phi}^{\varepsilon}_1] + (\tilde{\mathcal{K}}^{-\alpha_{3}-\varepsilon,\omega}_{\tilde{D}})^{\ast}[\tilde{\phi}^{\varepsilon}_3].
		\end{align}
		Here $ \tilde{\phi}_{j}^{\varepsilon}\in \mathbb{L}^2_{\alpha_{j}+\varepsilon}(\partial \tilde{D}) $, $ j = 1,2,3 $ and can be determined uniquely, as shown in \eqref{eqn:Restriction1}. The single layer potential and the Neumann-Poincar\'e operator are defined by 
		\begin{gather*}
			\begin{aligned}
				\tilde{\mathcal{S}}^{\alpha_{j}+\varepsilon,\omega}_{\tilde{D}}[\tilde{\phi}^{\varepsilon}_j]&\triangleq \int_{ \partial \tilde{D}} \tilde{G}^{\alpha_{j}+\varepsilon}(x-y)\tilde{\phi}^{\varepsilon}_{j}(y)d\sigma(y),\\
				(\tilde{\mathcal{K}}^{-\alpha_{j}-\varepsilon,\omega}_{\tilde{D}})^{\ast}[\tilde{\phi}^{\varepsilon}_j]&\triangleq 
				\operatorname{p.v.}\int_{ \partial \tilde{D}}n_x\cdot \nabla \tilde{G}^{\alpha_{j}+\varepsilon}(x-y)\tilde{\phi}^{\varepsilon}_{j}(y)d\sigma(y),
			\end{aligned}
		\end{gather*}
		The vectors $ \alpha_{1},\alpha_{2},\alpha_{3}  $ are defined by 
		\begin{equation}\label{eqn:HighSymmetry}
			 \alpha_{1} \triangleq \frac{2}{3}\tilde{k}_1 + \frac{1}{3}\tilde{k}_2 = k_1,\quad \alpha_2 \triangleq \frac{1}{3}\tilde{k}_1 + \frac{2}{3}\tilde{k}_2 = k_2,\quad \alpha_{3}=0.
		\end{equation}
	\end{theorem}
	\begin{remark}
		In other literature, the points $ \alpha_1,\alpha_{2} $ are often referred to as high symmetry points or diabolical points, see \cite{Ablowitz2009}. And for simplicity we omit the $ \alpha_{3} $ in the following proof.       
	\end{remark}
	To prove this theorem, we first give several lemmas:
	\begin{lemma}\label{lem:Decompose}
		The following decomposition holds
		\begin{equation}
			\Lambda' =   (\alpha_1 + \tilde{\Lambda}')  \cup (\alpha_2 + \tilde{\Lambda}')\cup \tilde{\Lambda}'. 
		\end{equation}
	\end{lemma}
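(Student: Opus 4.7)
The plan is to establish the decomposition by a coset-counting argument on lattices, using the index relation between $\Lambda'$ and $\tilde\Lambda'$.

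First I would identify the index $[\Lambda':\tilde\Lambda']$. By \Cref{proposi:subPeriodic}, the super honeycomb inclusions are invariant under the finer lattice $\tilde\Lambda$, and since the fundamental cell $\tilde Y$ has area $|Y|/3$ (three copies of $\tilde Y$ tile $Y$, consistent with having six inclusions in $Y$ versus two in $\tilde Y$), we get $[\tilde\Lambda:\Lambda]=3$. Taking dual lattices reverses the inclusion and preserves the index, so $\tilde\Lambda'\subset\Lambda'$ with $[\Lambda':\tilde\Lambda']=3$. This means $\Lambda'$ is a disjoint union of exactly three cosets of $\tilde\Lambda'$, and it suffices to exhibit three representatives lying in different cosets.

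Next I would verify that $0,\alpha_1,\alpha_2$ are such representatives. Using \eqref{eqn:BigDualBasis} one computes directly
\begin{equation*}
\tfrac{2}{3}\tilde k_1+\tfrac{1}{3}\tilde k_2=\tfrac{2}{3}(2k_1-k_2)+\tfrac{1}{3}(2k_2-k_1)=k_1,\qquad
\tfrac{1}{3}\tilde k_1+\tfrac{2}{3}\tilde k_2=k_2,
\end{equation*}
confirming $\alpha_1=k_1\in\Lambda'$ and $\alpha_2=k_2\in\Lambda'$ as claimed in \eqref{eqn:HighSymmetry}. To see that the three cosets $\tilde\Lambda'$, $\alpha_1+\tilde\Lambda'$, $\alpha_2+\tilde\Lambda'$ are distinct, I would check that none of $\alpha_1$, $\alpha_2$, $\alpha_1-\alpha_2$ lies in $\tilde\Lambda'$. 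The expressions $\alpha_1=\tfrac{2}{3}\tilde k_1+\tfrac{1}{3}\tilde k_2$ and $\alpha_2=\tfrac{1}{3}\tilde k_1+\tfrac{2}{3}\tilde k_2$ have non-integer coefficients in the basis $\{\tilde k_1,\tilde k_2\}$, and the same is true for $\alpha_1-\alpha_2=\tfrac{1}{3}(\tilde k_1-\tilde k_2)$, since $\{\tilde k_1,\tilde k_2\}$ is $\mathbb{R}$-linearly independent and hence a $\mathbb{Z}$-basis of $\tilde\Lambda'$.

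Combining the two observations finishes the proof: the three cosets are pairwise distinct, and there are only three cosets in total by the index count, so they exhaust $\Lambda'$. The only substantive step here is pinning down the index $[\Lambda':\tilde\Lambda']=3$; everything else is a direct calculation from \eqref{eqn:BasisVector}, \eqref{eqn:DualRelation}, and \eqref{eqn:BigDualBasis}. I do not expect any serious obstacle, as long as the area ratio $|Y|/|\tilde Y|=3$ coming from \Cref{proposi:subPeriodic} is stated explicitly before invoking the duality identity.
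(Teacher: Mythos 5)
Your proof is correct, but it takes a genuinely different route from the paper's. The paper argues entirely in coordinates: it writes $\tilde{\Lambda}'$, $\alpha_1+\tilde{\Lambda}'$, $\alpha_2+\tilde{\Lambda}'$ in the basis $\{k_1,k_2\}$ using \eqref{eqn:BigDualBasis} and then observes that the three resulting families of integer pairs, $(2m_1-m_2,2m_2-m_1)$, $(2m_1-m_2+1,2m_2-m_1)$, $(2m_1-m_2,2m_2-m_1+1)$, exhaust $\mathbb{Z}^2$ — an explicit, elementary covering argument. You instead argue structurally: $\tilde{\Lambda}'\subset\Lambda'$ has index $3$, so there are exactly three cosets, and it suffices to check that $0,\alpha_1,\alpha_2$ represent distinct cosets, which you do by noting the non-integer $\tilde{k}$-coordinates of $\alpha_1$, $\alpha_2$, $\alpha_1-\alpha_2$. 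Your approach buys two things: it avoids the slightly fiddly verification that the three integer-pair families cover $\mathbb{Z}^2$, and it yields pairwise disjointness of the three cosets for free, which is exactly what is implicitly used afterwards when the quasi-periodic Green's function $G^{\varepsilon,\omega}$ is split as $\frac{1}{3}(\tilde{G}^{\varepsilon,\omega}+\tilde{G}^{\alpha_1+\varepsilon,\omega}+\tilde{G}^{\alpha_2+\varepsilon,\omega})$ without double counting. One small improvement: rather than obtaining the index from the area ratio $|\tilde{Y}|=|Y|/3$ and duality (the vectors $\tilde{l}_1,\tilde{l}_2$ are never written out explicitly in the paper), you can read $[\Lambda':\tilde{\Lambda}']=3$ directly off \eqref{eqn:BigDualBasis}, since the change-of-basis matrix $\begin{pmatrix}2&-1\\-1&2\end{pmatrix}$ has determinant $3$; this makes your argument self-contained and independent of \Cref{proposi:subPeriodic}.
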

	
	\begin{proof}
		First we rewrite the dual lattice point $ \tilde{\Lambda}' $ by $ \tilde{\Lambda}' = \{m_1\tilde{k}_1 +m_2\tilde{k}_2: m_1,m_2\in \mathbb{Z}\}. $
		From \eqref{eqn:BigDualBasis}, we have 
		\[
		\tilde{\Lambda}' = \{(2m_1-m_2)k_1 +(2m_2-m_1)k_2: m_1,m_2\in \mathbb{Z}\}.
		\]
		Similarly we have 
		\begin{gather*}
			\begin{aligned}
				\alpha_1 + \tilde{\Lambda}' &= \{ (2m_1-m_2+1)k_1 +(2m_2-m_1)k_2: m_1,m_2\in \mathbb{Z} \}, \\
				\alpha_{2} + \tilde{\Lambda}' &= \{ (2m_1-m_2)k_1 +(2m_2-m_1+1)k_2: m_1,m_2\in \mathbb{Z} \}. 
			\end{aligned}
		\end{gather*}
		By the simple fact that
		\begin{equation}
			\mathbb{Z}^2 = \{ (2m_1-m_2,2m_2-m_1) \}\cup \{ (2m_1-m_2+1,2m_2-m_1) \} \cup \{ (2m_1-m_2,2m_2-m_1+1) \},
		\end{equation}
		we can conclude that
		\[   (\alpha_1 + \tilde{\Lambda}')  \cup (\alpha_2 + \tilde{\Lambda}')\cup\tilde{\Lambda}' = \{ n_1k_1+n_2k_2:n_1,n_2\in \mathbb{Z} \}= \Lambda' .\] 
	\end{proof}
	\begin{remark}
		By altering the above procedures slightly we can also prove that
		\[ \Lambda'\backslash\{0\} =   (\alpha_1 + \tilde{\Lambda}')  \cup (\alpha_2 + \tilde{\Lambda}') \cup(\tilde{\Lambda}'\backslash\{0\}). \]
	\end{remark}\par 
	From \Cref{lem:Decompose}, we immediately draw the conclusion that the single layer potential can be written as follows: 
	\begin{equation}
		\mathcal{S}_D^{\varepsilon,\omega}[\phi] = \frac{1}{3}\int_{\partial D}[\tilde{G}^{\alpha_1+\varepsilon,\omega}+\tilde{G}^{\alpha_2+\varepsilon,\omega}+\tilde{G}^{\varepsilon,\omega}(x-y)]\phi(y)d\sigma(y) .
	\end{equation}
	To simplify the above formula further, we decompose the space $ \mathbb{L}^2(\partial D) $ into three subspaces that are pairwise orthogonal. 
	\begin{lemma}\label{proposi:Decomposition}
		For sufficiently small $ |\varepsilon| $, the space $\mathbb{L}^2(\partial D)$ has an orthogonal decomposition as follows 
		\begin{equation}
			\mathbb{L}^2(\partial D) =    \mathbb{L}_{\alpha_1+\varepsilon}^2(\partial D)\oplus \mathbb{L}_{\alpha_2+\varepsilon}^2(\partial D)\oplus\mathbb{L}_{\varepsilon}^{2}(\partial D),
		\end{equation}
		where $ \mathbb{L}^{2}_{\tilde{\alpha}}(\partial D) $ for $ \tilde{\alpha}\in \tilde{Y}^{\prime} $ denotes the $ \tilde{\alpha} $ quasi-periodic extension of $ \mathbb{L}^{2}(\partial \tilde{D}) $ functions. 
		\begin{gather}
			\begin{aligned}
				\mathbb{L}_{\tilde{\alpha}}^{2}(\partial D) &\triangleq \{ f: f  = \chi_{_{\partial D_1}}f_1(y) + \eu^{\iu\tilde{\alpha}\cdot l_1}\chi_{_{\partial D_3}}f_1(y-l_1)+\eu^{\iu\tilde{\alpha}\cdot(l_1-l_2)}\chi_{_{\partial D_5}}f_1(y-l_1+l_2)\\
				&\qquad +\chi_{_{\partial D_2}}f_2(y)+\eu^{\iu\tilde{\alpha}\cdot(l_1-l_2)}\chi_{_{\partial D_4}}f_2(y-l_1+l_2)+\eu^{-\iu\tilde{\alpha}\cdot l_2}\chi_{_{\partial D_6}}f_2(y+l_2)\}.
			\end{aligned}
		\end{gather}
		Here $f_1\in \mathbb{L}^2(\partial D_1)$, $f_2\in \mathbb{L}^2(\partial D_2)$ and $ \partial \tilde{D} = \partial D_1\cup \partial D_2 $.
	\end{lemma}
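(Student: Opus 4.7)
The plan is to exploit Fourier analysis on the finite abelian group $G = \tilde{\Lambda}/\Lambda$, which has order three since $|Y|/|\tilde{Y}| = 3$. Its character group $\hat{G}$ is naturally identified with $\Lambda'/\tilde{\Lambda}'$, whose three cosets are represented by $0, \alpha_1, \alpha_2$ (by \Cref{lem:Decompose}). The three subspaces in the statement will then be the isotypic components under these characters, applied orbit-wise to the six inclusions.

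First, I would work out the $G$-orbit structure of $\{D_1,\dots,D_6\}$. Using $Rl_1 = -l_2$ and $Rl_2 = l_1+l_2$, direct computation gives $P_1 = l_1/3,\ P_3 = -(l_1+l_2)/3,\ P_5 = l_2/3$ and $P_2 = -l_2/3,\ P_4 = -l_1/3,\ P_6 = (l_1+l_2)/3$. Expressing the differences in the basis $\tilde{l}_1 = (2l_1+l_2)/3,\ \tilde{l}_2 = (l_1+2l_2)/3$ of $\tilde{\Lambda}$ yields $P_3 - P_1 = -\tilde{l}_1,\ P_5 - P_1 = \tilde{l}_2 - \tilde{l}_1$ and analogously $P_4 - P_2 = \tilde{l}_2 - \tilde{l}_1,\ P_6 - P_2 = \tilde{l}_2$. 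Hence $\{D_1, D_3, D_5\}$ and $\{D_2, D_4, D_6\}$ are the two $G$-orbits, and the relative translations between inclusions within each orbit realize all three cosets of $G$ modulo $\Lambda$.

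Next, I would interpret the formulas defining $\mathbb{L}^2_{\tilde{\alpha}}(\partial D)$ as the $\tilde{\alpha}$-quasi-periodic extension of $\mathbb{L}^2(\partial \tilde{D}) = \mathbb{L}^2(\partial D_1)\oplus \mathbb{L}^2(\partial D_2)$: combining the shift $y\mapsto y-l_1$ with the phase $e^{i\tilde{\alpha}\cdot l_1}$ and reducing via $l_1 + \tilde{l}_1 \in \tilde{\Lambda}$ is equivalent to the intrinsic $\tilde{\Lambda}$-quasi-periodic rule $g|_{\partial D_3}(y) = e^{-i\tilde{\alpha}\cdot \tilde{l}_1}\, g_1(y+\tilde{l}_1)$, and similarly for $\partial D_4, \partial D_5, \partial D_6$. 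With this reading, for each orbit separately the decomposition $f = f^{(0)} + f^{(1)} + f^{(2)}$ with $f^{(k)}\in \mathbb{L}^2_{\alpha_k + \varepsilon}$ reduces to inverting a $3\times 3$ matrix whose rows are indexed by the three cosets of $\tilde{\Lambda}/\Lambda$ and whose columns are the three characters of $G$. This is a discrete-Fourier Vandermonde matrix, hence invertible, which proves existence and uniqueness of the decomposition and the direct-sum relation.

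Orthogonality then follows from the classical character identity. For $g\in \mathbb{L}^2_{\tilde{\alpha}}$ and $h\in \mathbb{L}^2_{\tilde{\alpha}'}$ with $\beta = \tilde{\alpha} - \tilde{\alpha}' \in \{\pm\alpha_1, \pm\alpha_2, \pm(\alpha_1-\alpha_2)\}$ (note that the $\varepsilon$'s cancel), change of variable reduces $\langle g, h\rangle$ to $\int_{\partial D_1} g_1 \bar{h}_1$ times the phase sum $1 + e^{-i\beta\cdot \tilde{l}_1} + e^{-i\beta\cdot (\tilde{l}_1-\tilde{l}_2)}$, plus the analogous contribution from the second orbit. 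Using the dual-basis computation $k_i\cdot \tilde{l}_j = \tfrac{2\pi}{3}(1+\delta_{ij})$, each such sum collapses to $1 + \omega + \omega^2 = 0$ with $\omega = e^{2\pi i/3}$, and orthogonality follows. The main obstacle is precisely this bookkeeping step: reconciling the $\Lambda$-translations $l_1, l_1-l_2, -l_2$ that appear in the definition with the intrinsic $\tilde{\Lambda}/\Lambda$-translations that carry one inclusion of an orbit onto another. Once this is settled, the $\varepsilon$-dependence is harmless, since the Vandermonde depends analytically on $\varepsilon$ and remains invertible for all sufficiently small $|\varepsilon|$, while the phase sums controlling orthogonality are $\varepsilon$-independent.
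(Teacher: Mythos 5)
Your proposal is correct and follows essentially the same route as the paper: existence and uniqueness of the decomposition come from inverting a $3\times 3$ DFT-type (Vandermonde) system on each of the two triples $\{D_1,D_3,D_5\}$ and $\{D_2,D_4,D_6\}$, and orthogonality follows from $1+\eu^{2\pi\iu/3}+\eu^{4\pi\iu/3}=0$. The character-theoretic framing and the explicit verification of the $\tilde{\Lambda}/\Lambda$-orbit structure and translation bookkeeping are simply a more detailed account of what the paper leaves implicit, not a different argument.
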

	\begin{proof}
		For simplicity we only prove \eqref{eqn:SingleLayerDecomp}. We first prove that for any $g\in \mathbb{L}^2(\partial D) $ given by $ g = \sum_{l=1}^{6}g_l\chi_{_{\partial D_l}}, $
		we can uniquely solve the following linear equations 
		\begin{align} % \frac{4\pi}{3} \frac{2\pi}{3}
			\begin{pmatrix}
				1 & \eu^{\iu\varepsilon\cdot l_1} & \eu^{\iu\varepsilon\cdot(l_1-l_2)}\\
				1 & \eu^{\iu\frac{4\pi}{3}+\iu\varepsilon\cdot l_1} & \eu^{\iu\frac{2\pi}{3}+\iu\varepsilon\cdot(l_1-l_2)}\\
				1 & \eu^{\iu\frac{2\pi}{3}+\iu\varepsilon\cdot l_1} & \eu^{\iu\frac{4\pi}{3}+\iu\varepsilon\cdot(l_1-l_2)}
			\end{pmatrix}
			\begin{pmatrix}
				f_{1,\varepsilon} \\
				f_{1,\alpha_1+\varepsilon}\\
				f_{1,\alpha_2+\varepsilon}
			\end{pmatrix} = 
			\begin{pmatrix}
				g_1\\
				g_3\\
				g_5
			\end{pmatrix},\\
			\begin{pmatrix}
				1 & \eu^{\iu\varepsilon\cdot(l_1-l_2)} & \eu^{-\iu\varepsilon\cdot l_2}\\
				1 & \eu^{\iu\frac{2\pi}{3}+\iu\varepsilon\cdot(l_1-l_2)} & \eu^{\iu\frac{4\pi}{3}-\iu\varepsilon\cdot l_2}\\
				1 & \eu^{\iu\frac{4\pi}{3}+\iu\varepsilon\cdot(l_1-l_2)} & \eu^{\iu\frac{2\pi}{3}-\iu\varepsilon\cdot l_2}
			\end{pmatrix}
			\begin{pmatrix}
				f_{2,\varepsilon} \\
				f_{2,\alpha_1+\varepsilon}\\
				f_{2,\alpha_2+\varepsilon}
			\end{pmatrix} = 
			\begin{pmatrix}
				g_2\\
				g_4\\
				g_6
			\end{pmatrix}.
		\end{align}
		The orthogonality can be easily deduced from the equality $ 1+\eu^{2\pi\iu/3} + \eu^{4\pi\iu/3} =0$.
	\end{proof}\par 
	\begin{proof}[Proof of \Cref{thm:Decomposition}]
	Given \Cref{proposi:Decomposition}, we can decompose the function $ \phi\in \mathbb{L}^2(\partial D)$ as 
	\begin{equation}\label{eqn:Restriction1}
		\phi = \sum_{k=1}^3\phi^{\varepsilon}_k,\quad \phi_1^{\varepsilon}\in \mathbb{L}_{\alpha_1+\varepsilon}^2(\partial D),\; \phi_2^{\varepsilon}\in \mathbb{L}_{\alpha_2+\varepsilon}^2(\partial D),\;\phi_3^{\varepsilon}\in \mathbb{L}_{\varepsilon}^{2}(\partial D).
	\end{equation}
	Supposing $ x\in \partial \tilde{D} $, we first calculate:
	\[
	\frac{1}{3}\int_{\partial D} \tilde{G}^{\varepsilon,\omega}(x-y)\phi(y)d\sigma(y)  = \sum_{l=1}^6\frac{1}{3}\int_{ \partial D_l}\tilde{G}^{\varepsilon,\omega}(x-y)\phi(y)d\sigma(y).
	\] 
	Since we have
	\begin{equation*}
		\int_{ \partial D_3}\tilde{G}^{\varepsilon,\omega}(x-y)\phi(y)d\sigma(y) 
		= \int_{ \partial D_1} \tilde{G}^{\varepsilon,\omega}(x-y)[\phi_{1}^{\varepsilon}(y)+\eu^{4\pi\iu/3}\phi_{2}^{\varepsilon}(y) + \eu^{2\pi\iu/3}\phi_{3}^{\varepsilon}(y)]d\sigma(y),
	\end{equation*}
	we can deduce in the same manner for each inclusion $ \{\partial D_l\}_{l=1}^6 $, obtaining
	\[
	\int_{\partial \tilde{D}}\tilde{G}^{\varepsilon,\omega}(x-y)\phi_1^{\varepsilon}(y)d\sigma(y)=\tilde{\mathcal{S}}^{\varepsilon,\omega}_{\tilde{D}}[\tilde{\phi}_1^{\varepsilon}](x),
	\]
	again by the equality $ 1+\eu^{2\pi\iu/3}+\eu^{4\pi\iu/3}=0 $. Here $ \tilde{\phi}_{1}^{\varepsilon} $ denote the function $ \phi^{\varepsilon}_{1} $ restricted on $ \partial \tilde{D} $. 
	Similar procedures yield:
	\begin{subequations}
		\begin{align*}
			\frac{1}{3}\int_{\partial D}\tilde{G}^{\alpha_1+\varepsilon,\omega}(x-y)\phi(y)d\sigma(y) &= \tilde{\mathcal{S}}_{\tilde{D}}^{\alpha_1+\varepsilon,\omega}[\tilde{\phi}_2^{\varepsilon}](x),\\
			\frac{1}{3}\int_{\partial D} \tilde{G}^{\alpha_2+\varepsilon,\omega}(x-y)\phi(y)d\sigma(y) &= \tilde{\mathcal{S}}^{\alpha_2+\varepsilon,\omega}_{\tilde{D}}[\tilde{\phi}_3^{\varepsilon}](x) ,
		\end{align*}
	\end{subequations}
	where $ \phi^{\varepsilon}_{2}, \phi^{\varepsilon}_{3} $ restricted on $ \partial \tilde{D} $. To determine the value of $ \tilde{\mathcal{S}}^{\varepsilon,\omega}_{\tilde{D}}[\tilde{\phi}_1^{\varepsilon}](x), \tilde{\mathcal{S}}^{\alpha_1+\varepsilon,\omega}_{\tilde{D}}[\tilde{\phi}_2^{\varepsilon}](x),$ and $\tilde{\mathcal{S}}^{\alpha_2+\varepsilon,\omega}_{\tilde{D}}[\tilde{\phi}_3^{\varepsilon}](x)$ for $ x\in \partial D\backslash \partial \tilde{D} $, we extend them by quasi-periodicity $ \varepsilon,\alpha_{1}+\varepsilon, \alpha_{2}+\varepsilon $ correspondingly. \par 
	By altering the overall procedures a little, one can prove the decomposition when $ \omega=0 $, so we have proved \Cref{thm:Decomposition}.
	\end{proof}
	
	In \Cref{thm:Decomposition}, we have established that if one wishes to describe the band structure $ \{ \omega_{n}(\alpha,\sigma,\delta)\}_{n\in \mathbb{N}} $ near $ \alpha = 0 $, we only have to describe the band structure of similar problem \eqref{eqn:ProblemFormulation} defined on $ \tilde{D} $ near $ \alpha_1 ,\alpha_{2}$, and $\alpha_{3} = 0 $. \par 
	Recalling the results in \Cref{subsec:SuperHoneySymmetry} and \Cref{proposi:subPeriodic}, it follows directly that the inclusions inside the cell $ \tilde{D} $ satisfy the symmetry condition posed in \cite{Ammari2020}. Therefore we can invoke the result:
	\begin{theorem}[Theorem 4.1, \cite{Ammari2020}]\label{thm:DiracConeSmall}
		For sufficiently small $ \delta $, the first and second characteristic values of the operator-valued function 
		\[ \tilde{\mathcal{A}}^{\alpha,\omega}_{\delta}\triangleq \begin{pmatrix}
			\tilde{\mathcal{S}}^{\alpha,k_1}_{\tilde{D}} & -\tilde{\mathcal{S}}^{\alpha,k_0}_{\tilde{D}}\\
			-\frac{1}{2}\operatorname{Id}+(\tilde{\mathcal{K}}^{-\alpha,k_1}_{\tilde{D}})^\ast & -\delta \left[\frac{1}{2}\operatorname{Id}+(\tilde{\mathcal{K}}^{-\alpha,k_0}_{\tilde{D}})^\ast\right]
		\end{pmatrix}, \] 
		forms a Dirac cone at $ \alpha_1  $ and $ \alpha_2  $:
		\begin{gather}
			\begin{aligned}
				\omega_{1}(\alpha) &= \omega^{\ast}-\lambda|\alpha-\alpha_j|[1+\mathcal{O}(|\alpha-\alpha_j|)],\\
				\omega_{2}(\alpha) &= \omega^{\ast}+\lambda|\alpha-\alpha_j|[1+\mathcal{O}(|\alpha-\alpha_j|)].
			\end{aligned}
		\end{gather} 
		Here, $ j=1,2 $ and $ \lambda $ is a constant independent of $ \alpha $. And the error term $ \mathcal{O}(|\alpha-\alpha_j|) $ is uniform in $\delta$. As $\delta\to 0$, we have the following asymptotic formula
		\[
		0\neq\omega^\ast = \mathcal{O}(\sqrt{\delta}),\quad 0\neq \lambda = \mathcal{O}(\sqrt{\delta}).
		\]
	\end{theorem}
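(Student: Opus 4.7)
The plan is to reduce the characteristic value problem for $\tilde{\mathcal{A}}^{\alpha,\omega}_\delta$ to a finite-dimensional one, exactly as was done in \Cref{thm:EigValAsympt}, and then exploit the honeycomb symmetry inherited by the two-inclusion cell $\tilde{D}$. Specifically, the analogue of \Cref{thm:EigValAsympt} for $\tilde{D}$ (now with six replaced by two) gives two subwavelength bands whose squares are approximated by the eigenvalues of a $2\times 2$ quasi-periodic capacitance matrix $\tilde{\mathbf{C}}^\alpha$, defined via the analogue of \eqref{eqn:capacitanceMatDef} with $\mathcal{S}^{0,0}_{\tilde D}$ replaced by $\mathcal{S}^{\alpha,0}_{\tilde D}$ (which is now invertible uniformly away from $\alpha=0$). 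It then suffices to analyze $\tilde{\mathbf{C}}^\alpha$ near the $K$-points $\alpha = \alpha_j$.

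The next step is to prove that $\tilde{\mathbf{C}}^{\alpha_j}$ is a scalar multiple of the identity. Because the two inclusions are exchanged by the $2\pi/3$-rotation combined with a lattice shift that produces the Bloch phase $\eu^{\pm 2\pi\iu/3}$ precisely at $\alpha=\alpha_j$, the rotation operator intertwines with $\tilde{\mathbf{C}}^{\alpha_j}$ through a representation of $C_3$ whose two non-trivial characters are complex conjugates. Together with the reflection symmetry $T_y$, which induces an anti-linear involution that swaps these two characters, the commutation forces the matrix to have equal diagonal entries and vanishing off-diagonal ones, hence $\tilde{\mathbf{C}}^{\alpha_j} = c\,\mathrm{Id}$ for some $c>0$. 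This produces a doubly-degenerate subwavelength frequency $\omega^\ast = v_b\sqrt{2\delta c/|\tilde D|} = \mathcal{O}(\sqrt\delta)$.

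I would then linearize in the Bloch parameter. Writing $\alpha = \alpha_j + \beta$ and expanding the quasi-periodic Green's function in $\beta$ (using the spectral representation \eqref{eqn:QuasiGreensFunc}, which is smooth at $\alpha_j$), one obtains
\begin{equation*}
\tilde{\mathbf{C}}^{\alpha_j+\beta} = c\,\mathrm{Id} + \begin{pmatrix} 0 & \mu(\beta_1+\iu\beta_2) \\ \overline{\mu(\beta_1+\iu\beta_2)} & 0 \end{pmatrix} + \mathcal{O}(|\beta|^2),
\end{equation*}
where the diagonal corrections must vanish and the off-diagonal one is forced into this chiral form by the same $C_3$-covariance argument applied to the $\beta$-derivative. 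Diagonalizing gives eigenvalues $c \pm |\mu|\,|\beta|$, and feeding this into the asymptotic $|\omega_n^\alpha|^2 \approx 2\delta v_b^2 \lambda_n/|\tilde D|$ with a square-root expansion produces
\begin{equation*}
\omega^\alpha_{1,2} = \omega^\ast \mp \lambda |\alpha-\alpha_j|(1+\mathcal{O}(|\alpha-\alpha_j|)),\qquad \lambda = \frac{v_b |\mu|}{\sqrt{2c|\tilde D|/\delta}} = \mathcal{O}(\sqrt\delta).
\end{equation*}

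The main obstacle is verifying the non-degeneracy $\mu\neq 0$, i.e., that the Dirac matrix is not accidentally trivial. Symmetry alone only guarantees the off-diagonal form; it does not preclude vanishing. To rule this out I would compute $\partial_\beta \tilde{C}_{12}^{\alpha_j+\beta}\big|_{\beta=0}$ explicitly via the bilinear representation in \Cref{rmk:RepresentationCapacity}, replacing $\mathcal{S}^{0,0}$ by $\mathcal{S}^{\alpha_j,0}$, and identify the leading Fourier-mode contribution coming from the dual lattice vectors closest to $\alpha_j$. These contributions come with a non-vanishing geometric weight (an integral of $\eu^{\iu q\cdot y}$ against the density $\phi_{j,0}$ over $\partial D_j$), which is generically non-zero by the geometric condition $\operatorname{diam}(D_0) < 1/4$. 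This yields $\mu\neq 0$ and completes the Dirac cone.
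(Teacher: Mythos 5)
The first thing to note is that the paper does not prove \Cref{thm:DiracConeSmall} at all: it is quoted verbatim as Theorem~4.1 of \cite{Ammari2020}, and the paper's only obligation, discharged via \Cref{proposi:subPeriodic} and the symmetries of \Cref{subsec:SuperHoneySymmetry}, is to check that the two-inclusion cell $\tilde{D}$ satisfies the hypotheses of that reference. Your proposal instead reconstructs a proof, and its outline does follow the strategy of the cited work: reduce to a $2\times 2$ quasi-periodic capacitance matrix $\tilde{\mathbf{C}}^{\alpha}$ (which is legitimate near $\alpha_1,\alpha_2$ since $\mathcal{S}^{\alpha,0}_{\tilde{D}}$ is invertible away from $\alpha=0$, so none of the rescaling machinery of \Cref{thm:EigValAsympt} is needed there), use the threefold rotation to kill the off-diagonal entry at the high-symmetry points and the inclusion-exchanging symmetry to equate the diagonal entries, then expand in $\beta=\alpha-\alpha_j$ to get the chiral off-diagonal linear term and take a square root to convert eigenvalues of $\tilde{\mathbf{C}}^{\alpha}$ into frequencies.

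The genuine gap is the non-degeneracy step. The entire content of the Dirac-cone statement, beyond the symmetry-forced double eigenvalue, is that the linear coefficient $\mu$ (equivalently $\lambda$) is \emph{nonzero}, and your argument for this is ``generically non-zero by the geometric condition $\operatorname{diam}(D_0)<1/4$.'' That is not a proof: genericity does not cover all admissible inclusions, and the diameter bound is a separation condition that has no bearing on whether the Fourier-type integral $\int_{\partial D_1}\eu^{\iu q\cdot y}\phi(y)\,d\sigma(y)$ vanishes. Moreover the representation you invoke (\Cref{rmk:RepresentationCapacity} with $\mathcal{S}^{0,0}$ replaced by $\mathcal{S}^{\alpha_j,0}$) gives $\partial_{\beta}\tilde{C}_{12}$ as a full lattice sum, not just the contribution of the dual vectors nearest $\alpha_j$, so isolating a ``leading mode'' requires justification you do not supply. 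Establishing $\lambda\neq 0$ for arbitrary symmetric inclusions is precisely the delicate part of the argument in \cite{Ammari2020}, and it cannot be waved through. A secondary omission: the theorem asserts that the error $\mathcal{O}(|\alpha-\alpha_j|)$ is uniform in $\delta$, which in your scheme requires tracking the joint $(\delta,|\beta|)$ dependence of the capacitance-matrix approximation before taking the square root; the sketch does not address this two-parameter uniformity.
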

	Combining this theorem with \Cref{thm:Decomposition} and \Cref{proposi:capacitanceMatEig}, we can prove the following theorem, which guarantees the existence of double Dirac cone near $ \alpha=0 $ for super honeycomb-structured inclusions. 
	\begin{theorem}\label{thm:FourDirac}
		For sufficient small $ |\omega^\ast|>|\alpha|\ge 0 $ and $ \delta $, the second to fifth band functions separate from the first and sixth band, forming a double Dirac cone at $ \alpha = 0 $,
		\begin{gather*}
			\begin{aligned}
				\omega_{j}(\alpha,0,\delta) = \omega^\ast - \lambda|\alpha|(1+\mathcal{O}(|\alpha|)),\\
				\omega_{l}(\alpha,0,\delta) = \omega^\ast + \lambda|\alpha|(1+\mathcal{O}(|\alpha|)).
			\end{aligned}
		\end{gather*}
		where $ j = 2,3 $, $ l= 4,5 $. Here $ \omega^\ast $ and $ \lambda $ are the same as in those in \Cref{thm:DiracConeSmall}. 
	\end{theorem}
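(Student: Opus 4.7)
The plan is to derive \Cref{thm:FourDirac} from the band-folding result \Cref{thm:Decomposition} together with the single Dirac cone theorem \Cref{thm:DiracConeSmall}. Setting $\varepsilon = \alpha$ in \Cref{thm:Decomposition} and invoking the orthogonal decomposition
\[
\mathbb{L}^2(\partial D) = \mathbb{L}^2_\alpha(\partial D) \oplus \mathbb{L}^2_{\alpha_1+\alpha}(\partial D) \oplus \mathbb{L}^2_{\alpha_2+\alpha}(\partial D)
\]
from \Cref{proposi:Decomposition}, I would first show that $\mathcal{A}^{\alpha,\omega}_\delta$ is block-diagonal with respect to this splitting. On each summand its restriction is identified, via the natural isomorphism that sends a quasi-periodic extension on $\partial D$ to its restriction on $\partial \tilde{D}$, with $\tilde{\mathcal{A}}^{\alpha+\alpha_j,\omega}_\delta$ acting on $(\mathbb{L}^2(\partial \tilde{D}))^2$ (taking $\alpha_0 = 0$). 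Consequently, the characteristic values of $\mathcal{A}^{\alpha,\omega}_\delta$ inside a fixed subwavelength disk around $\omega = 0$ are exactly the union of those of $\tilde{\mathcal{A}}^{\alpha,\omega}_\delta$, $\tilde{\mathcal{A}}^{\alpha_1+\alpha,\omega}_\delta$, and $\tilde{\mathcal{A}}^{\alpha_2+\alpha,\omega}_\delta$.

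Because the diamond cell $\tilde{Y}$ contains two inclusions, each of the three blocks contributes two subwavelength characteristic values, accounting for all six bands. For $\alpha$ close to $0$, the $\alpha$-block produces band $1$ (vanishing at $\alpha = 0$) and band $6$ (a strictly larger frequency at $\alpha = 0$, since $\lambda_5 < \lambda_6$ by \Cref{proposi:capacitanceMatEig} combined with \Cref{thm:EigValAsympt}). Applying \Cref{thm:DiracConeSmall} to the two remaining blocks at shifted quasi-momenta $\alpha_j + \alpha$, $j = 1,2$, and using $|(\alpha_j + \alpha) - \alpha_j| = |\alpha|$, gives
\begin{align*}
\omega^{\alpha_j+\alpha}_1 &= \omega^\ast - \lambda|\alpha|(1 + \mathcal{O}(|\alpha|)), \\
\omega^{\alpha_j+\alpha}_2 &= \omega^\ast + \lambda|\alpha|(1 + \mathcal{O}(|\alpha|)),
\end{align*}
with the \emph{same} $\omega^\ast$ and $\lambda$ for both $j$, as stated in \Cref{thm:DiracConeSmall}. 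Collecting these four characteristic values yields two bands approaching $\omega^\ast$ from below with slope $-\lambda$ and two from above with slope $+\lambda$, i.e.\ exactly the double Dirac cone structure in the statement.

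To fix the labelling of the four bands as $\omega^\alpha_2,\ldots,\omega^\alpha_5$, I would invoke \Cref{thm:EigValAsympt} and \Cref{proposi:capacitanceMatEig} at $\alpha = 0$: when $\sigma = 0$ the middle four subwavelength eigenvalues collapse to a single value strictly between $0$ and the sixth band, which by continuity of the band functions in $\alpha$ forces the identification with bands $2, 3, 4, 5$.

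The step I expect to be the main obstacle is the block-diagonal claim for $\mathcal{A}^{\alpha,\omega}_\delta$: \Cref{thm:Decomposition} records only the decomposition of the single layer potential, so I must separately verify that the Neumann-Poincaré operators $(\mathcal{K}^{-\alpha,k}_D)^\ast$ also preserve each subspace $\mathbb{L}^2_{\alpha_j+\alpha}(\partial D)$, and that the restriction isomorphism intertwines the corresponding block of $\mathcal{A}^{\alpha,\omega}_\delta$ with $\tilde{\mathcal{A}}^{\alpha_j+\alpha,\omega}_\delta$. Once this intertwining is established by taking normal traces in \Cref{thm:Decomposition} and matching the jump relations on $\partial D$ with those on $\partial \tilde{D}$, the remainder of the argument is bookkeeping with \Cref{thm:DiracConeSmall}.
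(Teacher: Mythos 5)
Your proposal is correct and follows essentially the same route as the paper: fold the bands via \Cref{thm:Decomposition}, apply \Cref{thm:DiracConeSmall} to the two blocks at the shifted quasi-momenta $\alpha_1+\alpha$ and $\alpha_2+\alpha$, and identify the resulting four bands as the second through fifth using \Cref{thm:EigValAsympt} and \Cref{proposi:capacitanceMatEig} at $\sigma=0$. In fact you make explicit a point the paper leaves implicit (that the full operator $\mathcal{A}^{\alpha,\omega}_{\delta}$, not just the single layer potential, is block-diagonal with respect to the splitting of \Cref{proposi:Decomposition}), which is a welcome addition rather than a deviation.
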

	\begin{proof}
	This can be simply proved by noticing 
	\[ \mathcal{A}_{\delta}^{\alpha,\omega}\begin{pmatrix}
		\phi^{\alpha}\\\psi^{\alpha}
	\end{pmatrix} =  \tilde{\mathcal{A}}_{\delta}^{\alpha_1+\alpha,\omega}\begin{pmatrix}
	\phi^{\alpha}_1\\\psi^{\alpha}_1
	\end{pmatrix}+  \tilde{\mathcal{A}}_{\delta}^{\alpha_2+\alpha,\omega}\begin{pmatrix}
	\phi^{\alpha}_2\\\psi^{\alpha}_2
	\end{pmatrix}+  \tilde{\mathcal{A}}_{\delta}^{\alpha,\omega}\begin{pmatrix}
	\phi^{\alpha}_3\\\psi^{\alpha}_3
	\end{pmatrix} .\]
	Here 
	\[ \phi^{\alpha} = \sum_{j=1}^{3}\phi_j^{\alpha},\quad \psi^{\alpha} = \sum_{j=1}^{3}\psi_{j}^{\alpha}, \]
	and are determined from \Cref{proposi:Decomposition}. From \Cref{thm:DiracConeSmall}, one can prove that the degeneracy of band functions $ \{\omega_n(\alpha,0,\delta)\}_{n=1}^{\infty} $ at $ \alpha=0 $ is at least four-fold. We now invoke \Cref{thm:EigValAsympt} and \Cref{proposi:capacitanceMatEig} to finally prove that there are exactly four band functions crossing conically at $ \alpha = 0 $.  
	\end{proof}
	
	\section{Numerical Simulation}\label{sec:NumericalTest}
	In this section, we illustrate our results by direct finite element simulation. We apply the method proposed in \cite{Guo2021,Guo2021a} to solve \eqref{eqn:ProblemFormulation}, since the material parameters $ \rho(x),\kappa(x) $ are piece-wise constant in $ Y $. We suppose the inclusions are six circles whose radius is $ 0.086 $. 
	The material parameters are given by $ \rho_{0} = \kappa_{0} = 1 $, $ \rho_{1} = 1/50 $ and $ \kappa_{1} = 50 $. Thus the contrast parameter $ \delta = 1/50 $.\par 
	First, we illustrate the four-fold degeneracy at $ \alpha=0 $ and the double Dirac cone structure near it. We plot the first six bands of \eqref{eqn:ProblemFormulation} on a line segment connecting $ M_1 = -1/2(k_1+k_2) $ and $ M_2 = 1/2(k_1+k_2) $ in the unit dual cell. Each of its points can be determined by a unique real number $ \mu\in[-1,1] $. The results are shown in \Cref{subfig:ConeSegment}. The conical structure can be seen more clearly in \Cref{subfig:SurfCone}, where we plot the second to fifth band functions $ \{\omega_j(\alpha,0,\delta)\}_{j=2}^5 $ near $ \alpha=0 $, where the double Dirac point resides. These results coincide with \Cref{thm:FourDirac}.\par 
	\begin{figure}[htbp] 
		\centering
		\begin{subfigure}{0.33\textwidth}
			\includegraphics[width=\textwidth]{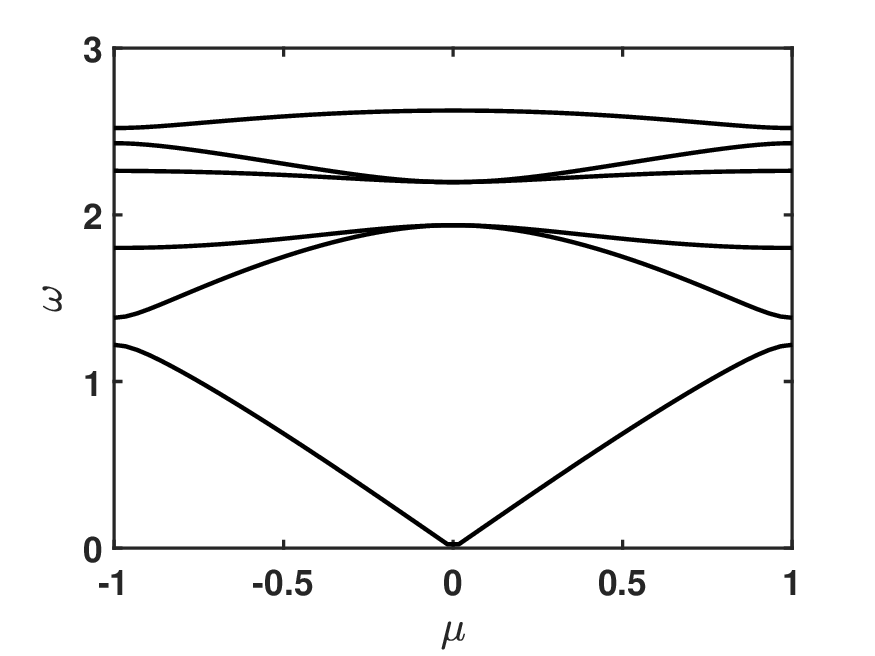}
			\caption{Contract, $ \sigma=-0.1 $.}
			\label{subfig:ContractSegment}
		\end{subfigure}
		\hspace{-0.5cm}
		\begin{subfigure}{0.33\textwidth}
			\includegraphics[width=\textwidth]{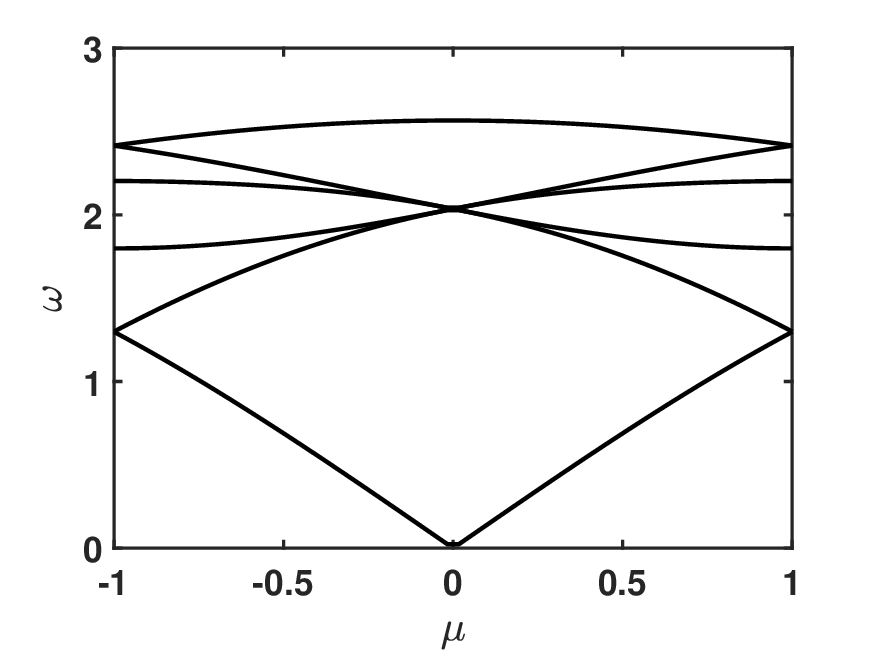}
			\caption{$ \sigma=0 $.}
			\label{subfig:ConeSegment}
		\end{subfigure}
		\hspace{-0.5cm}
		\begin{subfigure}{0.33\textwidth}
			\includegraphics[width=\textwidth]{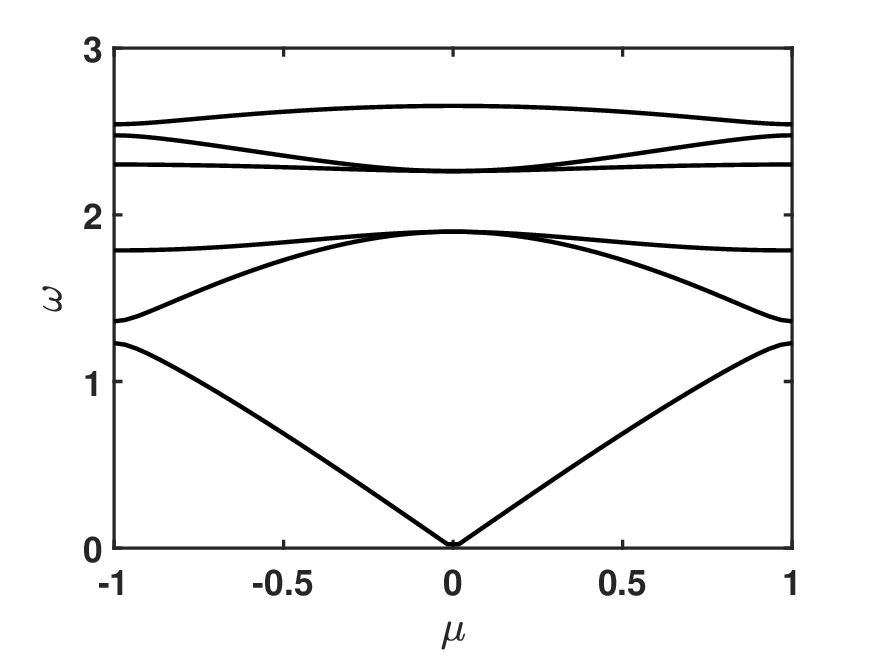}
			\caption{Dilate, $ \sigma=0.1 $.}
			\label{subfig:DilateSegment}
		\end{subfigure}
		\\
		\vspace{-0.4cm}
		\begin{subfigure}{0.33\textwidth}
			\includegraphics[width=\textwidth]{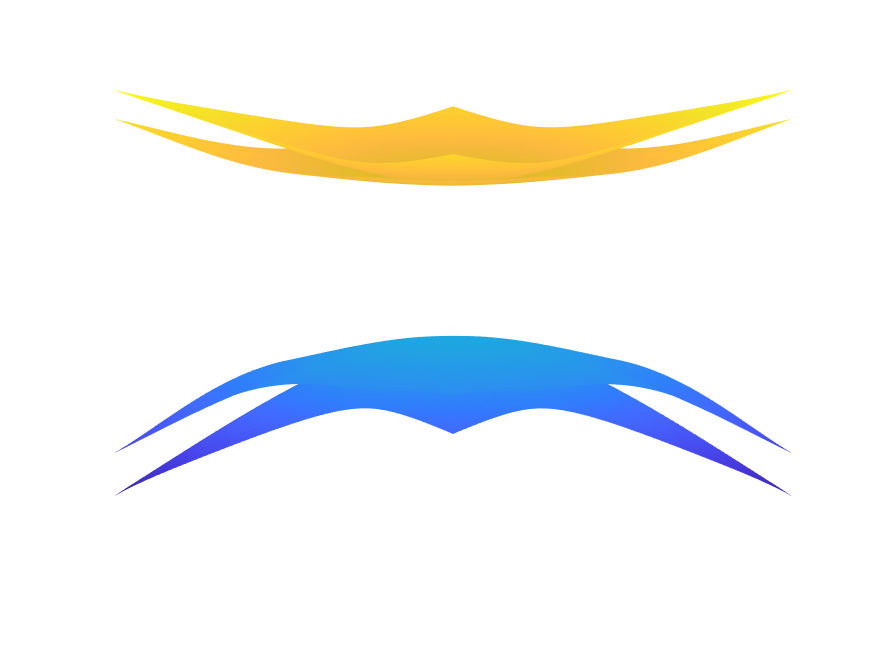}
			\caption{Contract, $ \sigma=-0.1 $.}
			\label{subfig:SurfContract}
		\end{subfigure}
		\hspace{-0.5cm}
		\begin{subfigure}{0.33\textwidth}
			\includegraphics[width=\textwidth]{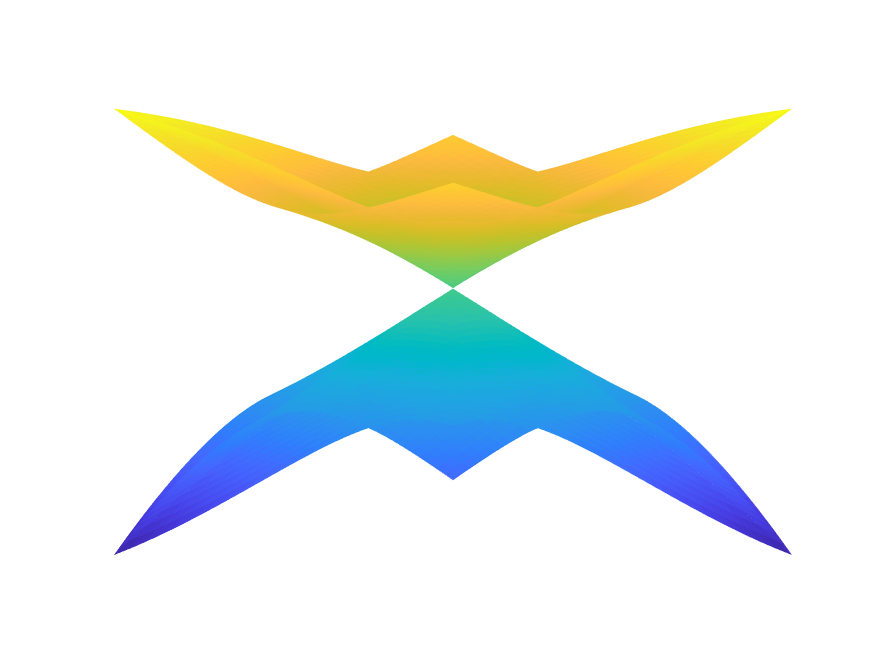}
			\caption{$ \sigma=0 $.}
			\label{subfig:SurfCone}
		\end{subfigure}
		\hspace{-0.5cm}
		\begin{subfigure}{0.33\textwidth}
			\includegraphics[width=\textwidth]{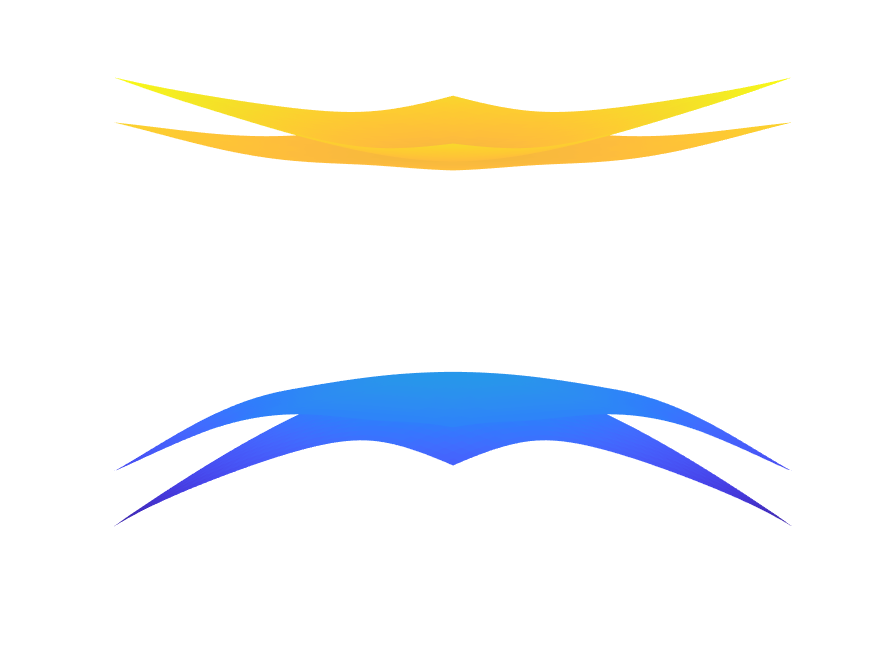}
			\caption{Dilate, $ \sigma=0.1 $.}
			\label{subfig:SurfDilate}
		\end{subfigure}
		\vspace{-0.5cm}
		\caption{\textit{Top Panel: we plot the first six bands $ \{\omega_n(\alpha,\sigma,\delta)\}_{n=1}^6 $ on the line segment $ \overline{M_1M_2} $ in the dual unit cell. Bottom Panel: In these figures we plot the second to fifth dispersion surfaces $ \{\omega_n(\alpha,\sigma,\delta)\}_{n=1}^6$ near $ \alpha=0 $. Here $ \delta = 1/50 $.}}
		\label{fig:BandStructureLine}
	\end{figure}
	Then we plot the figures of how the dilation and contraction affect the band structure $ \{\omega(\alpha,\sigma,\delta)\}_{\alpha\in Y'} $ near $ \alpha=0 $. To be more specific, we let the inclusions be six circles centered at $ \{P_{i,\sigma}\}_{i=1}^6 $. We choose $ \sigma = \mp0.1 $, corresponding to the contracted and dilated cases. Such examples are illustrated in \Cref{fig:LatticePert}. In \Cref{subfig:ContractSegment}, we plot the band structure $ \{\omega^2(\alpha,\sigma,\delta)\}_{\alpha\in Y'} $ when the inclusions are contracted. We can see that the second and third bands detach from the fourth and fifth bands. This can be seen more clearly in \Cref{subfig:SurfContract}, where we plot the second to fifth band functions $ \{\omega^2(\alpha,-0.1,\delta)\}_{\alpha\in Y'} $ near the point $ \alpha=0 $. Similar separation also appears when the inclusions are dilated, see \Cref{subfig:DilateSegment,subfig:SurfDilate}. These results correspond to \Cref{proposi:capacitanceMatEig}.\par 
	Finally, we compute the second to fifth eigenfunction of the original problem \eqref{eqn:ProblemFormulation}. And for simplicity we only discuss the contracted and dilated cases when $\sigma = \mp 0.1$. We first plot these figures in \Cref{fig:EigenContractDilate}. \par 
	\begin{figure}[htbp] 
		\centering
		\begin{subfigure}{0.25\textwidth}
			\includegraphics[width=\textwidth]{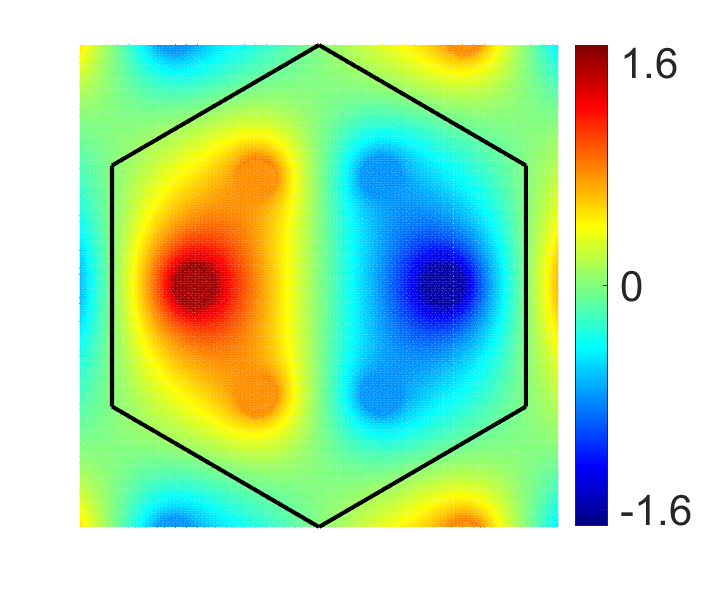}
			\caption{Second, $\sigma=-0.1$}
		\end{subfigure}
		\hspace{-0.3cm}
		\begin{subfigure}{0.25\textwidth}
			\includegraphics[width=\textwidth]{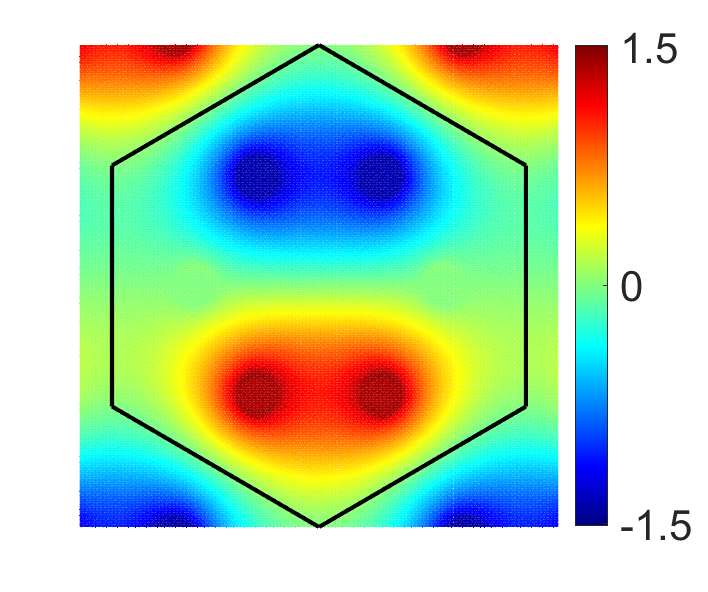}
			\caption{Third, $\sigma=-0.1$}
		\end{subfigure}
		\hspace{-0.3cm}
		\begin{subfigure}{0.25\textwidth}
			\includegraphics[width=\textwidth]{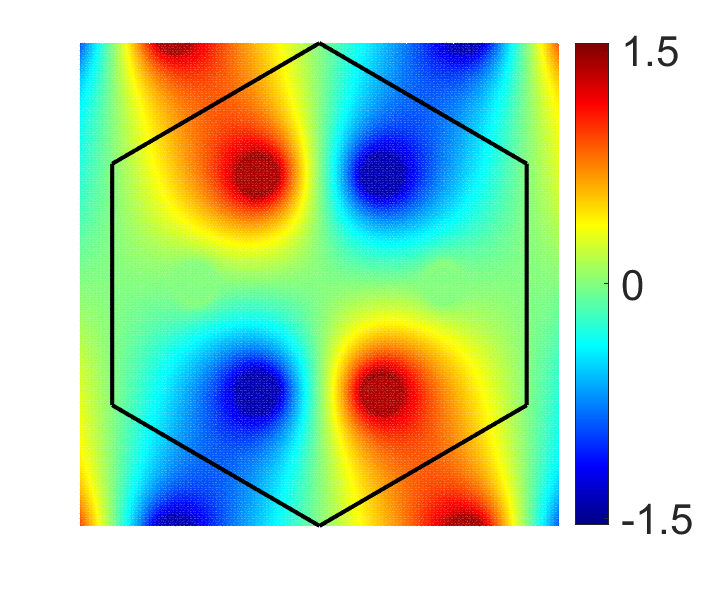}
			\caption{Fourth, $\sigma=-0.1$}
		\end{subfigure}
		\hspace{-0.3cm}
		\begin{subfigure}{0.25\textwidth}
			\includegraphics[width=\textwidth]{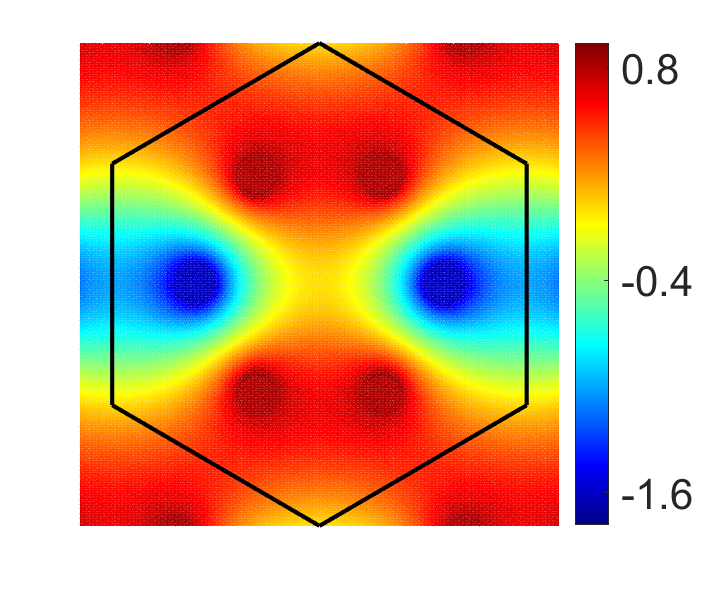}
			\caption{Fifth, $\sigma=-0.1$}
		\end{subfigure}
		\vspace{-0.3cm}\\
		\begin{subfigure}{0.25\textwidth}
			\includegraphics[width=\textwidth]{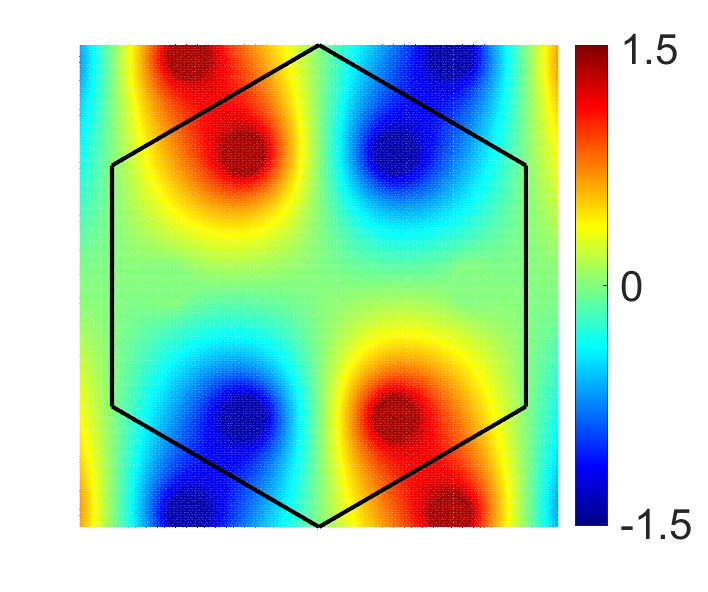}
			\caption{Second, $\sigma=0.1$}
		\end{subfigure}
		\hspace{-0.3cm}
		\begin{subfigure}{0.25\textwidth}
			\includegraphics[width=\textwidth]{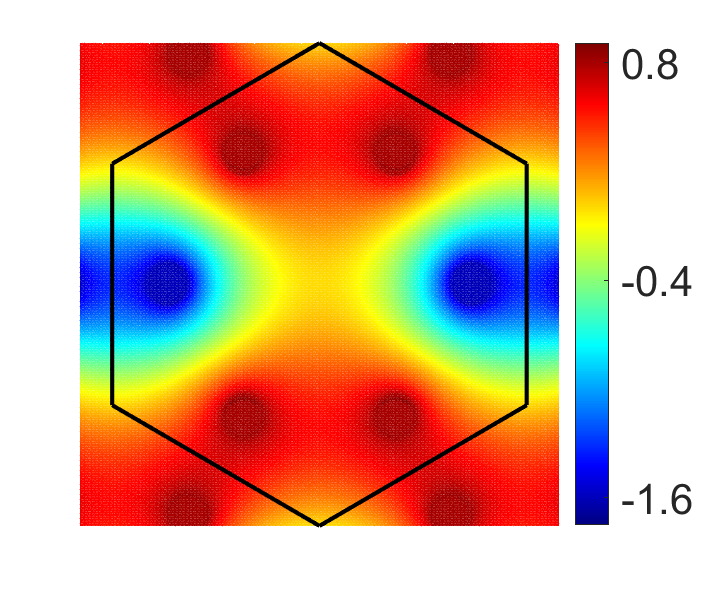}
			\caption{Third, $\sigma=0.1$}
		\end{subfigure}
		\hspace{-0.3cm}
		\begin{subfigure}{0.25\textwidth}
			\includegraphics[width=\textwidth]{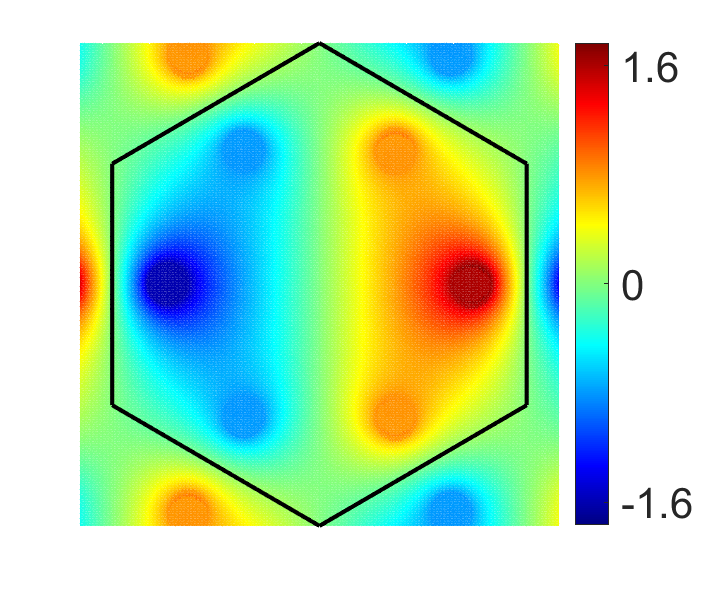}
			\caption{Fourth, $\sigma=0.1$}
		\end{subfigure}
		\hspace{-0.3cm}
		\begin{subfigure}{0.25\textwidth}
			\includegraphics[width=\textwidth]{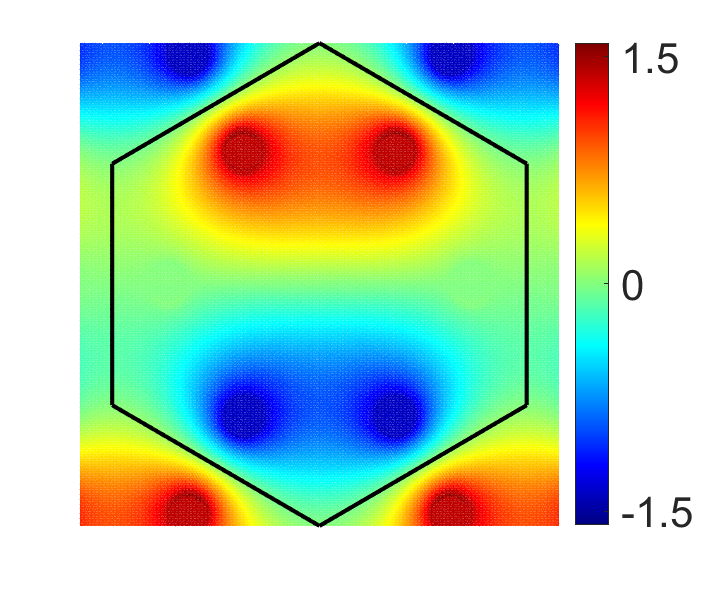}
			\caption{Fifth, $\sigma=0.1$}
		\end{subfigure}
		\vspace{-0.5cm}
		\caption{\textit{In the top panel we plot the second to fifth eigenfunction when the inclusions are contracted. In the bottom panel we plot the second to fifth eigenfunction when the inclosions are dilated. By 'second', 'third', 'fourth', 'fifth' we mean the corresponding eigenfunction. }}
		\label{fig:EigenContractDilate}
	\end{figure}
	We can see from the figure that when the inclusions are contracted, the second and third eigenfunctions are inversion anti-symmetric with respect to the center $O$. When the inclusions are dilated, the second and third eigenfuntions are inversion symmetric with respect to the center $O$. We can conclude that there is a topological phase transition between the contracted and dilated case. This result also coincide with \Cref{proposi:EigFuncPer}.
	
	\appendix
    \section{Periodic Layer Potential and Their Properties}
        Here we define the double layer potential $ \mathcal{D}^{0,0}_D $ for any $ \varphi\in \mathbb{L}^2(\partial D) $:
        \begin{equation}
        	\textstyle\mathcal{D}_D^{0,0}[\varphi](x)\triangleq-\int_{\partial D} n_y\cdot \nabla G^{0,0}(x-y)\varphi(y)d\sigma(y),\quad x\in \mathbb{R}^2\backslash\partial D.
        \end{equation}
    	It satisfies the following jump relations \cite{Ammari2018}
    	\begin{align}
    	\textstyle	 \mathcal{D}^{0,0}_D[\varphi]\big|_\pm(y) &=\textstyle\mp\frac{1}{2}\varphi(y) + \mathcal{K}^{0,0}_D[\varphi](y)\label{apeqn:jumpDoubleValue},\\ 
    		\quad n_y\cdot\mathcal{D}_D^{0,0}[\varphi] \big|_{+}(y)  &\textstyle= n_y  \cdot\nabla \mathcal{D}_D^{0,0}[\varphi] \big|_{-}(y).\label{apeqn:jumpDoubleDeri}
    	\end{align}
    	And we list the mapping properties, whose proofs can be found in \cite{Riva2021}
    	\begin{lemma}[Proposition 12.12, \cite{Riva2021}]\label{aplem:MappingProperty}
    		The following equalities hold
    		\begin{equation}\textstyle
    			\mathcal{K}_D^{0,0}[1] = \frac{1}{2} - \frac{|D|}{|Y|},\quad y \in \partial D,\qquad 
    			\mathcal{D}_D^{0,0}[1] = - \frac{|D|}{|Y|},\quad x \in Y\backslash D.
    		\end{equation}
    	\end{lemma}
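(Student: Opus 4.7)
The plan is to derive both identities from a single computation of the periodic Laplacian together with the divergence theorem, then extract the boundary value identity via the jump relation \eqref{apeqn:jumpDoubleValue}.

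First, I would establish the fundamental distributional identity for $G^{0,0}$. Differentiating \eqref{eqn:PeriodicGreenFunc} termwise yields
\[
\Delta G^{0,0}(x) = \frac{1}{|Y|}\sum_{q\in \Lambda'\setminus\{0\}} \eu^{\iu q\cdot x},
\]
and the Poisson summation formula gives $\frac{1}{|Y|}\sum_{q\in\Lambda'}\eu^{\iu q\cdot x} = \sum_{\gamma\in\Lambda}\delta(x-\gamma)$, so
\[
\Delta G^{0,0}(x) = \sum_{\gamma\in\Lambda}\delta(x-\gamma) - \frac{1}{|Y|}
\]
in the distributional sense.

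Next, I would compute $\mathcal{D}^{0,0}_D[1]$ outside $D$. Using the chain rule $\nabla_y[G^{0,0}(x-y)] = -\nabla G^{0,0}(x-y)$, the definition of $\mathcal{D}^{0,0}_D$ rewrites as
\[
\mathcal{D}^{0,0}_D[1](x) = \int_{\partial D} n_y \cdot \nabla_y\bigl[G^{0,0}(x-y)\bigr]\,d\sigma(y).
\]
Since $n_y$ is the outward normal on $\partial D$, the divergence theorem applied on $D$ gives
\[
\mathcal{D}^{0,0}_D[1](x) = \int_D \Delta_y G^{0,0}(x-y)\,dy.
\]
For $x\in Y\setminus D$ and $y\in D$, we have $x\neq y$; moreover since $D\Subset Y$ lies in the interior of the fundamental cell and $x\in Y$, there is no nonzero lattice vector $\gamma$ with $x-y=\gamma$. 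Consequently no Dirac mass contributes, and $\Delta_y G^{0,0}(x-y) = -1/|Y|$ throughout $D$, yielding
\[
\mathcal{D}^{0,0}_D[1](x) = -\frac{|D|}{|Y|},\qquad x\in Y\setminus D.
\]

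Finally, for $y\in\partial D$, I would take the nontangential limit from the exterior in the jump relation \eqref{apeqn:jumpDoubleValue}:
\[
\mathcal{D}^{0,0}_D[1]\bigl|_+(y) = -\tfrac{1}{2} + \mathcal{K}^{0,0}_D[1](y).
\]
Since the expression $-|D|/|Y|$ obtained above is constant, it extends continuously to $\partial D$, giving $\mathcal{D}^{0,0}_D[1]|_+(y) = -|D|/|Y|$, and solving yields $\mathcal{K}^{0,0}_D[1](y) = \tfrac{1}{2} - |D|/|Y|$.

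The only mildly delicate step is the verification that $x-y$ avoids the lattice $\Lambda$ when $y$ ranges over $D$; this uses the assumption $D\Subset Y$ together with the fact that $x$ lies in the fundamental cell. Everything else is a direct application of Poisson summation and the divergence theorem, so I do not anticipate a serious technical obstacle.
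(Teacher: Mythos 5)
Your proposal is correct, but note that the paper does not prove this lemma at all: it is quoted as Proposition 12.12 of \cite{Riva2021}, with the proof delegated to that reference. What you have written is a self-contained derivation of the cited facts, and it is essentially the standard one: the distributional identity $\Delta G^{0,0}=\sum_{\gamma\in\Lambda}\delta(\cdot-\gamma)-\tfrac{1}{|Y|}$ (which the paper itself records later as \eqref{apeqn:PeriodicGreen} in \Cref{apsec:RepresentSol}), the divergence theorem over $D$, and then the exterior jump relation \eqref{apeqn:jumpDoubleValue} to convert the exterior value of $\mathcal{D}^{0,0}_D[1]$ into the identity for $\mathcal{K}^{0,0}_D[1]$. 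The sign bookkeeping with the paper's convention $\mathcal{D}^{0,0}_D[\varphi](x)=-\int_{\partial D}n_y\cdot\nabla G^{0,0}(x-y)\varphi\,d\sigma(y)$ is handled correctly, and your lattice-avoidance step is sound: since $D\Subset Y$, any $y+\gamma$ with $\gamma\in\Lambda\setminus\{0\}$ lies in the interior of the translated cell $Y+\gamma$, which is disjoint from $\overline{Y}$, so no Dirac mass enters $\int_D\Delta_yG^{0,0}(x-y)\,dy$. Two small points of hygiene: the computation via the divergence theorem is valid for $x\in Y\setminus\overline{D}$ (for $x\in\partial D$ the surface integral is singular and the correct statement is precisely the principal-value identity $\mathcal{K}^{0,0}_D[1]=\tfrac12-\tfrac{|D|}{|Y|}$ that you then recover via the jump relation), so the displayed ``$x\in Y\backslash D$'' should be read as the exterior of $\overline{D}$; and the termwise differentiation plus Poisson summation should be understood, as you say, in the sense of distributions, since the Fourier series of $G^{0,0}$ does not converge classically after applying $\Delta$. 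What the citation buys the paper is brevity; what your argument buys is that the lemma is seen to follow from exactly the same periodic Green's function identity the paper already uses in Appendix B, with no external input.
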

    	From this one can prove 
    	\begin{lemma}\label{aplem:KernelKOp}
    	The operator $ -\frac{1}{2}\operatorname{Id} + \mathcal{K}_D^{0,0} $, regarded as an operator from $ \mathbb{L}^{2}(\partial D) $ to itself, has non-trivial kernel of 5 dimension. Further, for every $\psi\in \ker( -\frac{1}{2}\operatorname{Id} + \mathcal{K}_D^{0,0} )$, it satisfies
    	\[ \textstyle\psi = \sum_{j=1}^6c_j\chi_{_{\partial D_j}},\quad \sum_{j=1}^6c_j = 0. \]
    	\end{lemma}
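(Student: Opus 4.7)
The plan is to exploit the jump relations \eqref{apeqn:jumpDoubleValue}--\eqref{apeqn:jumpDoubleDeri} together with a standard harmonic uniqueness argument applied to the double-layer extension. Given $\psi \in \ker(-\frac{1}{2}\operatorname{Id} + \mathcal{K}_D^{0,0})$, set $u := \mathcal{D}^{0,0}_D[\psi]$. By construction $u$ is $\Lambda$-periodic (since $G^{0,0}$ is), bounded, and harmonic on $\mathbb{R}^2\setminus\partial D_{per}$, while the kernel hypothesis combined with \eqref{apeqn:jumpDoubleValue} gives $u|_+\equiv 0$ on $\partial D$.

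The first step is to deduce that $u\equiv 0$ on $Y\setminus\overline{D}$. Integration by parts in the fundamental cell yields $\int_{Y\setminus D}|\nabla u|^{2}\,dx = -\int_{\partial D} u\,\partial_{n} u\,d\sigma$, where the flux contributions on $\partial Y$ cancel by $\Lambda$-periodicity, and the right-hand side vanishes because $u|_+=0$, forcing $u$ to be constant and hence identically zero. Next, \eqref{apeqn:jumpDoubleDeri} delivers $\partial_{n} u|_-=\partial_{n} u|_+=0$ on $\partial D$, so on each connected component $D_j$ the function $u$ is harmonic with vanishing Neumann data, hence equals some constant $c_j$. Inverting the jump relation \eqref{apeqn:jumpDoubleValue}, we recover $\psi = u|_- - u|_+ = \sum_{j=1}^{6} c_j\,\chi_{_{\partial D_j}}$, which establishes the structural form of every kernel element.

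To pin down the constraint $\sum c_j = 0$ and establish the converse inclusion, the plan is to evaluate $\mathcal{D}^{0,0}_{D_j}[1]$ directly. Using $\Delta G^{0,0}(z)=\sum_{l\in\Lambda}\delta_l(z)-1/|Y|$ and the divergence theorem, one obtains $\mathcal{D}^{0,0}_{D_j}[1](x) = -|D_j|/|Y|$ for $x\in Y\setminus D_j$, in parallel with \Cref{aplem:MappingProperty}. Since the rotational symmetry forces $|D_j|=|D_0|$ for all $j$, the exterior trace reads $\mathcal{D}^{0,0}_D[\psi]|_+ = -(|D_0|/|Y|)\sum_{j=1}^{6}c_j$, a constant whose vanishing is equivalent to $\sum_{j=1}^{6} c_j = 0$. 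Together with the implication above this gives $\ker(-\tfrac{1}{2}\operatorname{Id}+\mathcal{K}^{0,0}_D) = \{\sum c_j\chi_{_{\partial D_j}} : \sum c_j = 0\}$, a subspace of dimension $5$.

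The principal subtlety is the uniqueness step: one must verify that a bounded $\Lambda$-periodic harmonic function on $Y\setminus\overline{D}$ with vanishing Dirichlet trace on $\partial D$ is identically zero, and that the energy identity is legitimate. The cancellation of $\partial Y$ contributions is ensured by periodicity, and the required $\mathbb{H}^1_{\mathrm{loc}}$ regularity of $u$ across $\partial Y$ is standard for double-layer potentials of $\mathbb{L}^2$ densities on piecewise-smooth boundaries; with these two points in hand the rest of the argument is bookkeeping.
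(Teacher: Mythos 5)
Your argument is correct and follows essentially the same route as the paper's proof: set $u=\mathcal{D}^{0,0}_D[\psi]$, use the jump relation to get $u|_{+}=0$ on $\partial D$, kill $u$ in $Y\setminus\overline{D}$ by uniqueness (your periodic energy identity in place of the paper's citation of the Dirichlet uniqueness result), use the Neumann jump relation to make $u$ constant on each $D_j$, recover $\psi=\sum_{j=1}^{6}c_j\chi_{_{\partial D_j}}$, and force $\sum_{j=1}^{6}c_j=0$ from $\mathcal{D}^{0,0}_{D_j}[1]=-|D_j|/|Y|$ outside $D_j$. A minor bonus is that your exterior-trace computation also gives the converse inclusion, so the equality $\dim\ker\bigl(-\tfrac{1}{2}\operatorname{Id}+\mathcal{K}^{0,0}_D\bigr)=5$ is fully justified rather than only the containment in that $5$-dimensional space, which the paper's proof leaves implicit.
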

    	\begin{proof}
    		For any $ \psi $ that satisfies $ (-\frac{1}{2}\operatorname{Id} + \mathcal{K}_D^{0,0})[\psi] = 0 $, we have, by the jump relations \eqref{apeqn:jumpDoubleValue}, 
    		$ \left.\mathcal{D}_D^{0,0}[\psi]\right|_{+} = 0 $ and $ \mathcal{D}_D^{0,0}[\psi] $ is harmonic in $ Y\backslash D $\cite{Riva2021}. It immediately follows that, by \Cref{thm:Uniqueness}, $ \mathcal{D}_D^{0,0}[\psi] \equiv 0 $ for all $ x\in Y\backslash D $. \par 
    		By the jump relation \eqref{apeqn:jumpDoubleDeri} and the existence result, $ \mathcal{D}_D^{0,0}[\psi] \equiv c_{j} $ on each inclusion $ D_{j} $. And it can be verified that $\psi = c_{j}$ on $\partial D_{j}$ and for $ x\in Y\backslash D $
    		\begin{equation*}
    			\textstyle\mathcal{D}_D^{0,0}[\psi] = \mathcal{D}_D^{0,0}[\sum_{j=1}^{6}c_{j}\chi_{_{D_j}}] = -\sum_{j=1}^{6}\frac{|D|}{6|Y|}c_{j} = -\frac{|D|}{6|Y|}\sum_{j=1}^{6}c_{j}=0.
    		\end{equation*}   
    		So $ (c_{j})_{j=1}^{6} $, if regarded as a vector in $ \mathbb{R}^{6} $, must satisfy $ \sum_{j=1}^{6}c_{j}=0 $. Since all such vectors form a linear space of dimension 5, it means $ \operatorname{dim} \ker( -\frac{1}{2}\operatorname{Id} + \mathcal{K}_D^{0,0} ) =5 $.
    	\end{proof}
    	Now we can state and prove the main proposition concerning  $\ker[ -\frac{1}{2}\operatorname{Id} + (\mathcal{K}_D^{0,0})^\ast] $:
    	\begin{proposi}\label{approposi:KernelKastOp}
    		The kernel of the operator $ -\frac{1}{2}\operatorname{Id}+(\mathcal{K}^{0,0}_D)^\ast $ is five dimensional. Moreover, for any $\varphi\in \ker[-\frac{1}{2}\operatorname{Id}+(\mathcal{K}^{0,0}_D)^\ast]$,
    		\begin{equation}\label{apeqn:SumZero}
    		\textstyle \mathcal{S}_D^{0,0}[\varphi] = \sum_{i=1}^{6}a_i\chi_{_{\partial D_i}}, 
    		\end{equation}
    		where $ \{ a_{i} \}_{i=1}^6 $ are real constants satisfying
    		$\sum_{i=1}^6a_i = 0. $
    	\end{proposi}
    	\begin{proof}
    		The first part is an immediate corollary from \Cref{aplem:KernelKOp}, and noticing that 
    		\[ \textstyle \operatorname{dim} \ker\big[ -\frac{1}{2}\operatorname{Id} + (\mathcal{K}_D^{0,0})^\ast \big]  = \operatorname{dim} \ker\big[ -\frac{1}{2}\operatorname{Id} + \mathcal{K}_D^{0,0} \big] . \]\par 
    		Now suppose $\varphi_j \in \ker\big[ -\frac{1}{2}\operatorname{Id} + (\mathcal{K}_D^{0,0})^{\ast} \big] $ for $ j = 1,2,\ldots,5 $. We have, by definition and \Cref{aplem:MappingProperty}:
    		\begin{equation*}\textstyle
    			\frac{1}{2}\int_{\partial D} \varphi_j d\sigma(y) = \int_{\partial D} (\mathcal{K}^{0,0}_D)^{\ast}[\varphi_j]d\sigma(y)= \int_{\partial D} \varphi_j \mathcal{K}_D^{0,0}[1]d\sigma(y)= (\frac{1}{2} - \frac{|D|}{|Y|})\int_{\partial D} \varphi_j d\sigma(y).
    		\end{equation*}
    		This implies $ \varphi_j\in \mathbb{L}^2_{0}(\partial D) $. Thus, $\mathcal{S}_D^{0,0}[\varphi_j]$ is harmonic in $ D $ and has homogeneous Neumann boundary on $ \partial D $. By \Cref{thm:Uniqueness}, $ \mathcal{S}_D^{0,0}[\varphi_j] \equiv d_{jk} $ in $ D_{k} $. 
    		If for all $ j $, \eqref{apeqn:SumZero} holds, then we are done. Otherwise, suppose we have $ \sum_{l=1}^{6}(d_{jl}-c_j)=0,\quad j=1,2,\ldots,5. $
    		Since we can regard $ ( d_{jl})_{l=1}^{6} $ as vectors in $ \mathbb{R}^6 $. Obviously the vectors $ \{ (d_{jl}) \}_{j=1}^{5} $ are linearly independent.  
    		And we argue by contradictory that the vectors $ \{(d_{jl}-c_j)\}_{j=1}^5 $ are linearly independent for any real numbers $ \{c_j\}_{j=1}^{5} $. \par 
    		Suppose otherwise there exist $ \{a_j\}_{j=1}^{5} $ that does not equals to zero at the same time, such that the following equality holds for some $ \{c_j\}_{j=1}^{5} \subset \mathbb{R}$:
    		\[ \textstyle\sum_{j=1}^{5}a_{j}(d_{jl})_{l=1}^{6} = \sum_{j=1}^{5}a_jc_j.\]
    		However, the above equality is equivalent to  
    		\[ \textstyle \sum_{j=1}^5\mathcal{S}^{0,0}_{D}[a_j\varphi_j](y) = \sum_{j=1}^5a_jc_j,\quad y\in \partial D. \]
    		By \Cref{lem:uniqueness}, the sum $ \sum_{j=1}^5a_j\varphi_j \equiv 0 $, which is a contradictory. \par 
    		Now we can assume, after a certain invertible linear transformation, there exist $ \{\psi_{j}\}_{j=1}^{5}\subset \ker[-\frac{1}{2}\operatorname{Id} + (\mathcal{K}^{0,0})^{\ast}] $ and $ \{ d_{j} \}_{j=1}^{5}\subset \mathbb{R} $, such that 
    		\begin{equation}\label{apeqn:SingleSum}
    			[\mathcal{S}_D^{0,0}[\psi_{j}] + d_{j}]|_{D_{k}}= \eu^{\iu jk\pi/3},\quad j = 1,2,\ldots 5.
    		\end{equation}
    		We have, by definition 
    		\[ [\mathcal{S}^{0,0}_{D}[\psi_j](Ry)+d_j ]|_{D_{k}}= \eu^{\iu j(k+1)\pi/3},\quad  j = 1,2,\ldots,5. \]
    		Multiplying the constant $ \eu^{\iu j\pi/3} $ on both sides of \eqref{apeqn:SingleSum}, we have 
    		\[[ \mathcal{S}^{0,0}_{D}[\eu^{\iu j\pi/3}\psi_j](y) + d_j\eu^{\iu j\pi/3}]|_{D_{k}} = \eu^{\iu j(k+1) \pi/3},\quad  j = 1,2,\ldots,5.  \] 
    		Combining the above two equations, we have
    		\begin{equation}
    			\mathcal{S}^{0,0}_{D}[\psi_j(Ry) - \eu^{\iu j\pi/3}\psi_j(y)]|_{D_k} = d_j(\eu^{\iu j\pi/3}-1),\quad j = 1,2,\ldots,5.
    		\end{equation}
    		By \Cref{lem:uniqueness}, we have $ d_j = 0 $, and we have finished the proof of \Cref{approposi:KernelKastOp}. 
    		\end{proof} 
   		\section{Representation of Solution}\label{apsec:RepresentSol}
   		In this section we justify that the non-constant solution to the original problem \eqref{eqn:ProblemFormulation} when $ \omega=0 $ can be represented by single layer potentials. From definition we have 
   		\begin{equation}\label{apeqn:PeriodicGreen}
   			\textstyle \Delta G^{0,0} = \sum_{n\in \Lambda} \delta(x-n)- \frac{1}{|Y|}.
   		\end{equation}
   		So when $ x\in D $ we have by \eqref{apeqn:PeriodicGreen}
   		\begin{subequations}
   			\begin{align*}
   				\textstyle u(x)-\frac{1}{|Y|}\int_{D} & \textstyle u(z)dz=\int_{D} u(z)\Delta G^{0,0}(x-z)-\Delta u(z) G^{0,0}(x-z)dz \\&\textstyle= 
   				\int_{\partial D}n_y\cdot \nabla G(x-y)u|_{-}(y) -\left.n_{y}\cdot \nabla u(y)\right|_{-}G^{0,0}(x-y)d\sigma(y).
   			\end{align*}
   		\end{subequations}	
   		And we can obtain the following equality similarly 
   		\begin{subequations}
   			\begin{align*}
   				\textstyle-\frac{1}{|Y|}\int_{Y\backslash \overline{D}} u(z)dz&=\textstyle\int_{Y\backslash \overline{D}} u(z)\Delta G^{0,0}(x-z)-\Delta u(z) G^{0,0}(x-z)dz \\&\textstyle= 
   				\int_{\partial D}-n_y\cdot \nabla G(x-y)u|_{+}(y) +\left.n_{y}\cdot \nabla u(y)\right|_{+}G^{0,0}(x-y)d\sigma(y).
   			\end{align*}
   		\end{subequations}
   			Combining the above two equalities and the jump condition in \eqref{eqn:ProblemFormulation}, we have
   		\begin{equation}
   			\textstyle u(x)-\frac{1}{|Y|}\int_{Y} u(z)dz=\mathcal{S}^{0,0}_{D}[ \left.n_{y}\cdot \nabla u(y)\right|_{+} - \left.n_{y}\cdot \nabla u(y)\right|_{-} ](x).
   		\end{equation}
   		Hence the constant solution cannot be represented by single layer potentials. 
   		\section{Uniqueness of Laplace Equation}
   		To make this article more self-contained, we list some uniqueness results of the Laplace equation in $ D  $ or $ Y\backslash D $ with Dirichlet boundary condition.
   		\begin{theorem}[Theorem 12.16 and Theorem 12.17 in \cite{Riva2021}]\label{thm:Uniqueness}
   			Let $ u\in C^{1,\alpha}(D) $ and $ v\in C^{1,\alpha}(Y\backslash D) $ be $ \Lambda $-periodic functions that satisfy the following interior Dirichlet and exterior Dirichlet problems:
   			\begin{gather*}
   			\textstyle\left\{\begin{aligned}
   					&\textstyle\Delta u = 0, \quad x\in D,\\
   					&\textstyle u=0,\quad y \in \partial D,\\
   					&\textstyle u(x+l) = u(x),\quad \forall x\in D, l\in \Lambda.
   			\end{aligned}\right. \qquad 
   			\left\{ \begin{aligned}
   				&\textstyle\Delta v = 0, \quad x\in Y\backslash D,\\
   				&\textstyle v=0,\quad y \in \partial D,\\
   				&\textstyle v(x+l) = v(x),\quad \forall x\in Y\backslash D, l\in \Lambda.
   			\end{aligned} \right.
   			\end{gather*}
   			Then $ u \equiv 0 $ and $ v\equiv 0 $ in $ D $ and $ Y\backslash D $ correspondingly.
   		\end{theorem}
   		\section*{Acknowledgments}
	We would like to acknowledge the assistance of Prof. Habib Ammari for insightful discussions. We would also like to acknowledge the assistance of Ms. Ying Cao for interesting discussions.
	\bibliographystyle{siamplain}
	\bibliography{references}
\end{document}